\newtheorem{theorem}{Theorem}
\newtheorem{lemma}[theorem]{Lemma}
\newtheorem{proposition}[theorem]{Proposition}
\newtheorem{corollary}[theorem]{Corollary}
\theoremstyle{definition}
\newtheorem{example}[theorem]{Example}
\definecolor{webgreen}{rgb}{0,.5,0}
\definecolor{webbrown}{rgb}{.6,0,0}
\newcommand{\seqnum}[1]{\href{http://www.research.att.com/cgi-bin/access.cgi/as/~njas/sequences/eisA.cgi?Anum=#1}{\underline{#1}}}
\begin{document}

\begin{center}
\vskip 1cm{\LARGE\bf Riordan arrays, orthogonal polynomials as moments, and Hankel transforms} \vskip 1cm \large
Paul Barry\\
School of Science\\
Waterford Institute of Technology\\
Ireland\\
\href{mailto:pbarry@wit.ie}{\tt pbarry@wit.ie} \\

\end{center}
\vskip .2 in

\begin{abstract} Taking the examples of Legendre and Hermite orthogonal polynomials, we show how to interpret the fact that these
orthogonal polynomials are moments of other orthogonal polynomials in terms of their associated Riordan arrays. We use these means to
calculate the Hankel transforms of the associated polynomial sequences.
\end{abstract}
\section{Introduction}

In this note, we shall re-interpret some of the results  of Ismail and Stanton \cite{Ismail_Stanton_1, Ismail_Stanton_2} in terms of
Riordan arrays. These  authors  give functionals \cite{Ismail_Stanton_1} whose moments are the Hermite, Laguerre, and various Meixner
families of polynomials. In this note, we shall confine ourselves to Legendre and Hermite polynomials. Indeed, the types of orthogonal
polynomials representable with Riordan arrays is very limited (see below), but it is nevertheless instructive to show that a number of
them can be exhibited as moments, again using (parameterized) Riordan arrays.

The essence of the paper is to show that a Riordan array $L$ (either ordinary or exponential) defines a family of orthogonal polynomials (via its inverse $L^{-1}$) if and only if its production matrix \cite{DeutschShap, ProdMat_0, ProdMat} is tri-diagonal. The sequence of moments $\mu_n$ associated to the family of orthogonal polynomials then appears as the elements of the first column of $L$. In terms of generating functions, this means that if $L=(g,f)$ (or $L=[g,f]$), then $g(x)$ is the generating function of the moment sequence. By defining suitable parameterized Riordan arrays, we can exhibit the Legendre and Hermite polynomials as such moment sequences.

While partly expository in nature, the note assumes a certain familiarity with integer sequences, generating functions, orthogonal
polynomials \cite{Chihara, wgautschi, Szego}, Riordan arrays \cite{SGWW, Spru}, production matrices \cite{ProdMat, P_W}, and the
Hankel transform of sequences \cite{BRP, CRI, Layman}. We provide background material in this note to give a hopefully coherent
narrative.
Many interesting examples of sequences and Riordan arrays can be found in Neil Sloane's On-Line
Encyclopedia of Integer Sequences (OEIS), \cite{SL1, SL2}. Sequences are frequently referred to by their
OEIS number. For instance, the binomial matrix $\mathbf{B}$ (``Pascal's triangle'') is \seqnum{A007318}.
\newline\newline
\noindent The plan of the paper is as follows:
\begin{enumerate}
\item This Introduction
\item Preliminaries on integer sequences and (ordinary) Riordan arrays
\item Orthogonal polynomials and Riordan arrays
\item Exponential Riordan arrays and orthogonal polynomials
\item The Hankel transform of an integer sequence
\item Legendre polynomials
\item Legendre polynomials as moments
\item Hermite polynomials
\item Hermite polynomials as moments
\item Acknowledgements
\item Appendix - The Stieltjes transform of a measure

\end{enumerate}

\section{Preliminaries on integer sequences and Riordan arrays}
For an integer sequence $a_n$, that is, an element of $\mathbb{Z}^\mathbb{N}$, the power series
$f(x)=\sum_{n=0}^{\infty}a_n x^n$ is called the \emph{ordinary generating function} or g.f. of the sequence.
$a_n$ is thus the coefficient of $x^n$ in this series. We denote this by
$a_n=[x^n]f(x)$. For instance, $F_n=[x^n]\frac{x}{1-x-x^2}$ is the $n$-th Fibonacci number \seqnum{A000045}, while
$C_n=[x^n]\frac{1-\sqrt{1-4x}}{2x}$ is the $n$-th Catalan number \seqnum{A000108}. The article \cite{Merlini_MC} gives examples of the use of the operator $[x^n]$. We use the notation
$0^n=[x^n]1$ for the sequence $1,0,0,0,\ldots,$ \seqnum{A000007}. Thus $0^n=[n=0]=\delta_{n,0}=\binom{0}{n}$. Here,
we have used the Iverson bracket notation \cite{Concrete},
defined by $[\mathcal{P}]=1$ if the proposition $\mathcal{P}$
is true, and
$[\mathcal{P}]=0$ if $\mathcal{P}$ is false.

For a power series
$f(x)=\sum_{n=0}^{\infty}a_n x^n$ with $f(0)=0$ we define the reversion or compositional inverse of $f$ to be the
power series $\bar{f}(x)$ such that $f(\bar{f}(x))=x$. We shall sometimes write this as
$\bar{f}= \text{Rev}f$.
\newline\newline
For a lower triangular matrix $(a_{n,k})_{n,k \ge 0}$ the row sums give the sequence with general term
$\sum_{k=0}^n a_{n,k}$ while the diagonal sums form the sequence with general term
$$\sum_{k=0}^{\lfloor \frac{n}{2} \rfloor} a_{n-k,k}.$$
\noindent
The \emph{Riordan group} \cite{SGWW, Spru}, is a set of
infinite lower-triangular integer matrices, where each matrix is
defined by a pair of generating functions
$g(x)=1+g_1x+g_2x^2+\cdots$ and $f(x)=f_1x+f_2x^2+\cdots$ where
$f_1\ne 0$ \cite{Spru}. We assume in addition that $f_1=1$ in what follows. The associated matrix is the matrix whose
$i$-th column is generated by $g(x)f(x)^i$ (the first column being
indexed by 0). The matrix corresponding to the pair $g, f$ is
denoted by $(g, f)$ or $\cal{R}$$(g,f)$. The group law is then given
by
\begin{displaymath} (g, f)\cdot(h, l)=(g, f)(h, l)=(g(h\circ f), l\circ
f).\end{displaymath} The identity for this law is $I=(1,x)$ and the
inverse of $(g, f)$ is $(g, f)^{-1}=(1/(g\circ \bar{f}), \bar{f})$
where $\bar{f}$ is the compositional inverse of $f$.

A Riordan array of the form $(g(x),x)$, where $g(x)$ is the
generating function of the sequence $a_n$, is called the
\emph{sequence array} of the sequence $a_n$. Its general term is
$a_{n-k}$ (or more precisely, $[k \le n] a_{n-k}$). Such arrays are also called \emph{Appell} arrays as they form the elements of the so-called
Appell subgroup.
\newline\newline If $\mathbf{M}$ is the matrix $(g,f)$, and
$\mathbf{a}=(a_0,a_1,\ldots)'$ is an integer sequence with ordinary
generating function $\cal{A}$ $(x)$, then the sequence
$\mathbf{M}\mathbf{a}$ has ordinary generating function
$g(x)$$\cal{A}$$(f(x))$. The (infinite) matrix $(g,f)$ can thus be considered to act on the ring of
integer sequences $\mathbb{Z}^\mathbb{N}$ by multiplication, where a sequence is regarded as a
(infinite) column vector. We can extend this action to the ring of power series
$\mathbb{Z}[[x]]$ by
$$(g,f):\cal{A}(\mathnormal{x}) \mapsto \mathnormal{(g,f)}\cdot
\cal{A}\mathnormal{(x)=g(x)}\cal{A}\mathnormal{(f(x))}.$$
\begin{example} The so-called \emph{binomial matrix} $\mathbf{B}$ is the element
$(\frac{1}{1-x},\frac{x}{1-x})$ of the Riordan group. It has general
element $\binom{n}{k}$, and hence as an array coincides with Pascal's triangle. More generally, $\mathbf{B}^m$ is the
element $(\frac{1}{1-m x},\frac{x}{1-mx})$ of the Riordan group,
with general term $\binom{n}{k}m^{n-k}$. It is easy to show that the
inverse $\mathbf{B}^{-m}$ of $\mathbf{B}^m$ is given by
$(\frac{1}{1+mx},\frac{x}{1+mx})$.
\end{example}
\begin{example} If $a_n$ has generating function $g(x)$, then the generating function of the
sequence $$b_n=\sum_{k=0}^{\lfloor \frac{n}{2} \rfloor} a_{n-2k}$$ is equal to
$$\frac{g(x)}{1-x^2}=\left(\frac{1}{1-x^2},x\right)\cdot g(x),$$ while the generating function of the sequence
$$d_n=\sum_{k=0}^{\lfloor \frac{n}{2} \rfloor} \binom{n-k}{k} a_{n-2k}$$ is equal to
$$\frac{1}{1-x^2}g\left(\frac{x}{1-x^2}\right)=\left(\frac{1}{1-x^2},\frac{x}{1-x^2}\right)\cdot g(x).$$
\end{example}
\noindent The row sums of the matrix $(g, f)$ have generating function
$$(g,f)\cdot \frac{1}{1-x}=\frac{g(x)}{1-f(x)}$$
 while the diagonal sums of $(g, f)$ (sums of left-to-right diagonals in the North East direction) have generating
function $g(x)/(1-xf(x))$. These coincide with the row sums of the ``generalized'' Riordan array $(g,xf)$:
$$(g,xf)\cdot\frac{1}{1-x}=\frac{g(x)}{1-xf(x)}.$$ \noindent For instance the
Fibonacci numbers $F_{n+1}$ are the diagonal sums of the binomial matrix $\mathbf{B}$ given by
$\left(\frac{1}{1-x},\frac{x}{1-x}\right)$\,:
\begin{displaymath}\left(\begin{array}{ccccccc} 1 & 0 &
0
& 0 & 0 & 0 & \ldots \\1 & 1 & 0 & 0 & 0 & 0 & \ldots \\ 1 & 2
& 1 & 0 & 0 &
0 & \ldots \\ 1 & 3 & 3 & 1 & 0 & 0 & \ldots \\ 1 & 4 & 6
& 4 & 1 & 0 & \ldots \\1 & 5 & 10 & 10 & 5 & 1
&\ldots\\
\vdots &
\vdots & \vdots & \vdots & \vdots & \vdots &
\ddots\end{array}\right)\end{displaymath} while they are the row sums of the
``generalized'' or ``stretched'' \cite{CMS} Riordan array
$\left(\frac{1}{1-x},\frac{x^2}{1-x}\right)$\,:
\begin{displaymath}\left(\begin{array}{ccccccc} 1 & 0 &
0
& 0 & 0 & 0 & \ldots \\1 & 0 & 0 & 0 & 0 & 0 & \ldots \\ 1 & 1
& 0 & 0 & 0 &
0 & \ldots \\ 1 & 2 & 0 & 0 & 0 & 0 & \ldots \\ 1 & 3 & 1
& 0 & 0 & 0 & \ldots \\1 & 4 & 3 & 0 & 0 & 0
&\ldots\\
\vdots &
\vdots & \vdots & \vdots & \vdots & \vdots &
\ddots\end{array}\right).\end{displaymath}
\noindent

\noindent Each Riordan array $(g(x),f(x))$ has bi-variate generating function given by
$$\frac{g(x)}{1-yf(x)}.$$
For instance, the binomial matrix $\mathbf{B}$ has generating function
$$\frac{\frac{1}{1-x}}{1-y\frac{x}{1-x}}=\frac{1}{1-x(1+y)}.$$
\newline\newline
\noindent For a sequence $a_0, a_1, a_2, \ldots$ with g.f. $g(x)$, the ``aeration'' of the sequence is the sequence
$a_0, 0, a_1, 0, a_2, \ldots$ with interpolated zeros. Its g.f. is $g(x^2)$.

The aeration of a (lower-triangular) matrix $\mathbf{M}$ with general term $m_{i,j}$ is the matrix whose general term is given by
$$m^r_{\frac{i+j}{2},\frac{i-j}{2}}\frac{1+(-1)^{i-j}}{2},$$ where
$m^r_{i,j}$ is the $i,j$-th element of the reversal of $\mathbf{M}$:
$$m^r_{i,j}=m_{i,i-j}.$$
In the case of a Riordan array (or indeed any lower triangular array), the row sums of the aeration are equal to the diagonal sums of
the reversal of the original matrix.
\begin{example}
The Riordan array $(c(x^2), xc(x^2))$ is the aeration of $(c(x),xc(x))$ \seqnum{A033184}. Here
$$c(x)=\frac{1-\sqrt{1-4x}}{2x}$$ is the g.f. of the Catalan numbers.
Indeed, the reversal of $(c(x), xc(x))$ is the matrix with general element
$$[k\le n+1] \binom{n+k}{k}\frac{n-k+1}{n+1},$$ which begins

\begin{displaymath}\left(\begin{array}{ccccccc} 1 & 0 &
0
& 0 & 0 & 0 & \ldots \\1 & 1 & 0 & 0 & 0 & 0 & \ldots \\ 1 & 2
& 2 & 0 & 0 &
0 & \ldots \\ 1 & 3 & 5 & 5 & 0 & 0 & \ldots \\ 1 & 4 & 9
& 14 & 14 & 0 & \ldots \\1 & 5 & 14 & 28 & 42 & 42
&\ldots\\
\vdots &
\vdots & \vdots & \vdots & \vdots & \vdots &
\ddots\end{array}\right).\end{displaymath} This is \seqnum{A009766}.
Then $(c(x^2),xc(x^2))$ has general element
$$\binom{n+1}{\frac{n-k}{2}}\frac{k+1}{n+1}\frac{1+(-1)^{n-k}}{2},$$ and begins
\begin{displaymath}\left(\begin{array}{ccccccc} 1 & 0 &
0
& 0 & 0 & 0 & \ldots \\0 & 1 & 0 & 0 & 0 & 0 & \ldots \\ 1 & 0
& 1 & 0 & 0 &
0 & \ldots \\ 0 & 2 & 0 & 1 & 0 & 0 & \ldots \\ 2 & 0 & 3
& 0 & 1 & 0 & \ldots \\0 & 5 & 0 & 4 & 0 & 1
&\ldots\\
\vdots &
\vdots & \vdots & \vdots & \vdots & \vdots &
\ddots\end{array}\right).\end{displaymath} This is \seqnum{A053121}.
Note that
$$(c(x^2),xc(x^2))=\left(\frac{1}{1+x^2},\frac{x}{1+x^2}\right)^{-1}.$$
\noindent We observe that the diagonal sums of the reverse of $(c(x),xc(x))$ coincide with the row sums of
$(c(x^2),xc(x^2))$, and are equal to the central binomial coefficients $\binom{n}{\lfloor \frac{n}{2} \rfloor}$ \seqnum{A001405}.
\end{example}
An important feature of Riordan arrays is that they have a number of sequence characterizations \cite{Cheon, He}. The simplest of
these
is as follows.
\begin{proposition} \label{Char} \cite[Theorem 2.1, Theorem 2.2]{He} Let $D=[d_{n,k}]$ be an infinite triangular matrix. Then $D$ is a Riordan array if and only if there
exist two sequences $A=[a_0,a_1,a_2,\ldots]$ and $Z=[z_0,z_1,z_2,\ldots]$ with $a_0 \neq 0$, $z_0 \neq 0$ such that
\begin{itemize}
\item $d_{n+1,k+1}=\sum_{j=0}^{\infty} a_j d_{n,k+j}, \quad (k,n=0,1,\ldots)$
\item $d_{n+1,0}=\sum_{j=0}^{\infty} z_j d_{n,j}, \quad (n=0,1,\ldots)$.
\end{itemize}
\end{proposition}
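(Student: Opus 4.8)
The plan is to work throughout with the ordinary generating functions of the columns of $D$. Write $d_k(x)=\sum_{n\ge 0} d_{n,k}x^n$ for the g.f.\ of column $k$, and introduce $A(x)=\sum_{j\ge 0}a_jx^j$ and $Z(x)=\sum_{j\ge 0}z_jx^j$. The engine of the whole argument is the observation that, because $D$ is lower triangular, each sum $\sum_{j\ge 0}a_jd_{n,k+j}$ and $\sum_{j\ge 0}z_jd_{n,j}$ is in fact finite (the entries vanish once the column index exceeds $n$), so every manipulation below takes place among honest power series. Moreover the two displayed recurrences, together with the single entry $d_{0,0}$, determine $D$ completely: row $0$ is $(d_{0,0},0,0,\dots)$ by triangularity, and given row $n$ the first relation produces every $d_{n+1,k+1}$ with $k\ge 0$ while the second produces $d_{n+1,0}$. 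Hence any two matrices satisfying both recurrences with the same $(0,0)$-entry coincide, and this row-by-row determinacy is what will close the equivalence.

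For the forward direction, suppose $D=(g,f)$, so that $d_k(x)=g(x)f(x)^k$. Multiplying the first recurrence by $x^{n+1}$ and summing over $n$ turns the left side into $\sum_{m\ge 1}d_{m,k+1}x^m=d_{k+1}(x)$ (the term $d_{0,k+1}$ vanishing by triangularity) and the right side into $x\sum_{j}a_j d_{k+j}(x)=x\,d_k(x)A(f(x))$, where the interchange of the two summations is legitimate because $d_{k+j}(x)$ has lowest degree $x^{k+j}$. Dividing by $d_k=gf^k$ collapses this to the single functional equation $f(x)=x\,A(f(x))$. The same procedure applied to the second recurrence, now keeping track of the constant term $d_{0,0}=g(0)$, gives $g(x)-g(0)=x\,g(x)Z(f(x))$, i.e.\ $g(x)=d_{0,0}\big/\big(1-x\,Z(f(x))\big)$. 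Reading off lowest-order coefficients yields $a_0=f_1=1\ne 0$ and $z_0=g_1/d_{0,0}$, exhibiting the required sequences $A$ and $Z$.

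For the reverse direction I am handed $A$ and $Z$ with $a_0\ne 0$ and must manufacture a Riordan array. Since $a_0\ne 0$, the equation $f=x\,A(f)$ has a unique formal solution with $f(0)=0$ and $f_1=a_0\ne 0$: indeed $\bar f(y)=y/A(y)$ is a well-defined series with $\bar f'(0)=1/a_0\ne 0$, so its reversion $f$ exists and is unique (this is exactly where $a_0\ne 0$ is indispensable). With this $f$ I define $g(x)=d_{0,0}\big/\big(1-x\,Z(f(x))\big)$, a legitimate power series because $1-x\,Z(f(x))$ has constant term $1$, and $g(0)=d_{0,0}$. Now form the Riordan array $(g,f)$. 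By the forward direction already proved, $(g,f)$ satisfies the two given recurrences with precisely this $A$ (determined by $f=xA(f)$) and this $Z$, and with leading entry $g(0)=d_{0,0}$, the same as $D$. By the determinacy observation of the first paragraph, $D=(g,f)$, so $D$ is a Riordan array.

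I expect the main obstacle to be the bookkeeping in the generating-function translation, especially for the $Z$-recurrence: the index shift $n\mapsto n+1$ combined with the nonzero constant term of $g$ must be handled carefully to produce $g=d_{0,0}/(1-x\,Z(f))$ rather than a spurious relation, and one must verify summability before writing $\sum_j a_j d_{k+j}=d_k\,A(f)$. Once these identities are secured, the equivalence is driven entirely by the unique solvability of $f=x\,A(f)$ and by the row-by-row determinacy of $D$ from its two recurrences and the entry $d_{0,0}$.
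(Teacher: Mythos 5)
The paper itself contains no proof of this proposition: it is quoted from He and Sprugnoli with a citation, and the only trace of an argument in the text is equation (1), which records $A(x)=x/\bar{f}(x)$ and $Z(x)=\frac{1}{\bar{f}(x)}\left(1-\frac{1}{g(\bar{f}(x))}\right)$. Judged against the standard argument that the citation points to, your proposal is in substance correct and follows exactly that route: translating the two recurrences columnwise into the functional equations $f(x)=x\,A(f(x))$ and $g(x)=d_{0,0}/\left(1-x\,Z(f(x))\right)$ is precisely equivalent to the paper's equation (1) (substitute $x=\bar{f}(y)$ to solve for $A$ and $Z$), your summability remarks are placed at the two points where they are genuinely needed, and the row-by-row determinacy observation is a clean way to close the converse: construct $(g,f)$ from the given $A$ and $Z$, invoke the already-proved forward direction, and match leading entries.

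There are, however, two holes which you pass over silently, both traceable to the statement as transcribed here rather than to your strategy — but a complete answer must confront them. First, the statement requires $z_0\neq 0$, and your proof never establishes this; nor can it, because it is false. Your own computation $z_0=g_1/d_{0,0}$ already shows this: any Riordan array with $g_1=0$, for instance the identity $(1,x)$, admits only the identically zero $Z$-sequence (from $d_{n+1,0}=0=\sum_j z_j\delta_{n,j}=z_n$), so no $Z$-sequence with $z_0\neq 0$ exists and the ``only if'' direction fails as literally stated. You should have flagged that the nondegeneracy condition $z_0 \neq 0$ cannot be delivered, instead of quietly recording $z_0=g_1/d_{0,0}$ as if it settled the matter. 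Second, your reverse direction tacitly assumes $d_{0,0}\neq 0$: if $d_{0,0}=0$, the zero matrix satisfies both recurrences for any $A$ and $Z$ yet is not a Riordan array, and your construction degenerates to $g=0$, so that the concluding claim ``$D=(g,f)$ is a Riordan array'' breaks down. The proposition is only true when ``triangular matrix'' is read as having nonvanishing diagonal (as in the cited source) and the condition on $z_0$ is dropped; with those amendments your argument goes through.
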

The coefficients $a_0,a_1,a_2,\ldots$ and $z_0,z_1,z_2,\ldots$ are called the $A$-sequence and the $Z$-sequence of the Riordan array
$D=(g(x),f(x))$, respectively.
Letting $A(x)$ be the generating function of the $A$-sequence and $Z(x)$ be the generating function of the $Z$-sequence, we have
\begin{equation}\label{AZ_eq} A(x)=\frac{x}{\bar{f}(x)}, \quad Z(x)=\frac{1}{\bar{f}(x)}\left(1-\frac{1}{g(\bar{f}(x))}\right).\end{equation}
\section{Orthogonal polynomials and Riordan arrays}
By an
\emph{orthogonal polynomial sequence} $(p_n(x))_{n \ge 0}$ we shall
understand \cite{Chihara, wgautschi} an infinite sequence of
polynomials
$p_n(x)$, $n\ge 0$, of degree $n$, with real coefficients (often integer
coefficients) that are mutually orthogonal on an interval
$[x_0,x_1]$ (where
$x_0=-\infty$ is allowed, as well as $x_1=\infty$), with
respect to a weight function $w:[x_0,x_1] \to \mathbb{R}$ \,:
$$\int_{x_0}^{x_1}
p_n(x)p_m(x)w(x)dx = \delta_{nm}\sqrt{h_nh_m},$$ where
$$\int_{x_0}^{x_1} p_n^2(x)w(x)dx=h_n.$$ We assume that $w$ is strictly positive on the interval $(x_0,x_1)$.  Every such
sequence
obeys a so-called
``three-term recurrence" \,: $$
p_{n+1}(x)=(a_nx+b_n)p_n(x)-c_np_{n-1}(x)$$ for coefficients
$a_n$, $b_n$ and $c_n$ that depend on $n$ but
not $x$. We note that if $$p_j(x)=k_jx^j+k'_jx^{j-1}+\ldots
\qquad j=0,1,\ldots$$ then $$a_n=\frac{k_{n+1}}{k_n},\qquad
b_n=a_n\left(\frac{k'_{n+1}}{k_{n+1}}-\frac{k'_n}{k_n}\right),
\qquad c_n=a_n\left(\frac{k_{n-1}h_n}{k_nh_{n-1}}\right).$$

Since the degree
of $p_n(x)$ is $n$, the coefficient array of the polynomials
is
a lower triangular (infinite) matrix. In the case of monic
orthogonal
polynomials (where $k_n=1$ for all $n$) the diagonal elements of this array will all be
$1$. In this case, we can write the three-term recurrence as
$$p_{n+1}(x)=(x-\alpha_n)p_n(x)-\beta_n p_{n-1}(x), \qquad
p_0(x)=1,\qquad p_1(x)=x-\alpha_0.$$

\noindent The \emph{moments} associated to the orthogonal
polynomial sequence are the numbers $$\mu_n=\int_{x_0}^{x_1}
x^n w(x)dx.$$

\noindent We can find $p_n(x)$, $\alpha_n$ and $\beta_n$ (and in the right circumstances, $w(x)$ - see the Appendix) from a
knowledge
of these moments. To do this, we let $\Delta_n$ be the Hankel
determinant
$|\mu_{i+j}|_{i,j\ge 0}^n$ and $\Delta_{n,x}$ be the same
determinant, but with the last row equal to $1,x,x^2,\ldots$.
Then
$$p_n(x)=\frac{\Delta_{n,x}}{\Delta_{n-1}}.$$ More generally,
we let $H\left(\begin{array}{ccc} u_1&\ldots& u_k\\
v_1 & \ldots & v_k\end{array}\right)$ be the determinant
of
Hankel type with $(i,j)$-th term $\mu_{u_i+v_j}$. Let
$$\Delta_n=H\left(\begin{array}{cccc} 0&1&\ldots&n\\
0&1&\ldots&n\end{array}\right),\qquad
\Delta'_n=H_n\left(\begin{array}{ccccc}
0&1&\ldots&n-1&n\\ 0&1&\ldots&n-1&n+1\end{array}\right).$$
Then
we have
$$\alpha_n=\frac{\Delta'_n}{\Delta_n}-\frac{\Delta'_{n-1}}{\Delta_{n-1}},\qquad
\beta_n=\frac{\Delta_{n-2} \Delta_n}{\Delta_{n-1}^2}.$$

We shall say that a family of polynomials $\{p_n(x)\}_{n \ge 0}$ is \cite{Viennot} \emph{formally orthogonal}, if there exists a linear functional $\mathcal{L}$ on polynomials such that
\begin{enumerate}
\item $p_n(x)$ is a polynomial of degree $n$,
\item $\mathcal{L}(p_n(x)p_m(x))=0$ for $m\ne n$,
\item $\mathcal{L}(p_n^2(x)) \ne 0$.
\end{enumerate}
Consequences of this definition include \cite{Viennot} that
$$\mathcal{L}(x^m p_n(x))=\kappa_n \delta_{mn}, \quad 0 \le m \le n,\quad \kappa_n \ne 0,$$ and if
$q(x)=\sum_{k=0}^n a_k p_k(x)$, then $a_k=\mathcal{L}(qp_k)/\mathcal{L}(p_k^2)$.

The sequence of numbers $\mu_n=\mathcal{L}(x^n)$ is called the sequence of moments of the family of orthogonal polynomials defined by
$\mathcal{L}$.
Note that where a suitable weight function $w(x)$ exists, then we can realize the functional $\mathcal{L}$ as
$$\mathcal{L}(p(x))=\int_{\mathbb{R}} p(x)w(x)\,dx.$$
\noindent If the family $p_n(x)$ is an orthogonal family for the functional $\mathcal{L}$, then it is also orthogonal for $c\mathcal{L}$, where $c \ne 0$. In the sequel, we shall always assume that $\mu_0=\mathcal{L}(p_0(x))=1$.

The following well-known results (the first is the well-known ``Favard's Theorem''), which we essentially reproduce from
\cite{Kratt}, specify the links between orthogonal polynomials, three term recurrences, and the recurrence coefficients and the g.f. of
the moment sequence of the orthogonal polynomials.
\begin{theorem} \label{ThreeT}\cite{Kratt} (Cf. \cite{Viennot}, Th\'eor\`eme $9$ on p.I-4, or \cite{Wall}, Theorem $50.1$). Let $(p_n(x))_{n\ge 0}$
be a sequence of monic polynomials, the polynomial $p_n(x)$ having degree $n=0,1,\ldots$ Then the sequence $(p_n(x))$ is (formally)
orthogonal if and only if there exist sequences $(\alpha_n)_{n\ge 0}$ and $(\beta_n)_{n\ge 1}$ with $\beta_n \neq 0$ for all $n\ge 1$,
such that the three-term recurrence
$$p_{n+1}=(x-\alpha_n)p_n(x)-\beta_n p_{n-1}(x), \quad \text{for}\quad n\ge 1, $$
holds, with initial conditions $p_0(x)=1$ and $p_1(x)=x-\alpha_0$.
\end{theorem}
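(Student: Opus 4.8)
The plan is to prove the two implications of the biconditional separately, working throughout with the fact that the monic polynomials $p_n$, having distinct degrees $0,1,2,\dots$, form a basis of the polynomial ring.

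For the ``only if'' direction, assume $(p_n)$ is formally orthogonal with respect to a functional $\mathcal{L}$. Since $\{p_0,\dots,p_{n+1}\}$ is a basis for the polynomials of degree at most $n+1$, I would expand $x p_n(x)=\sum_{k=0}^{n+1} c_{n,k}p_k(x)$. Comparing leading coefficients forces $c_{n,n+1}=1$, while for $k\le n-2$ I would compute $c_{n,k}=\mathcal{L}(p_n\, x p_k)/\mathcal{L}(p_k^2)$; as $x p_k$ has degree $k+1<n$ it lies in the span of $p_0,\dots,p_{n-1}$, so orthogonality annihilates the inner product and $c_{n,k}=0$. Only $c_{n,n}$ and $c_{n,n-1}$ survive, and rearranging gives $p_{n+1}=(x-\alpha_n)p_n-\beta_n p_{n-1}$ with $\alpha_n=c_{n,n}$ and $\beta_n=c_{n,n-1}$. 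Finally, since $x p_{n-1}=p_n+(\text{lower order})$, I obtain $\beta_n=\mathcal{L}(p_n^2)/\mathcal{L}(p_{n-1}^2)\neq 0$, as required.

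For the ``if'' direction, which is the substantive half, assume the recurrence holds with every $\beta_n\neq 0$. I would \emph{define} a linear functional $\mathcal{L}$ by declaring $\mathcal{L}(p_0)=1$ and $\mathcal{L}(p_n)=0$ for $n\ge 1$ on the basis $\{p_n\}$, extending linearly. To establish orthogonality it suffices to show $\mathcal{L}(x^m p_n)=0$ whenever $0\le m<n$: since $\{p_0,\dots,p_m\}$ spans the polynomials of degree $\le m$, this yields $\mathcal{L}(p_mp_n)=0$ for $m<n$, and writing $p_n=x^n+(\text{lower order})$ then reduces $\mathcal{L}(p_n^2)$ to $\mathcal{L}(x^n p_n)$.

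The heart of the matter is this vanishing claim, which I expect to be the main obstacle, and the crucial point is to induct on the monomial exponent $m$ rather than on $n$ (a naive induction on $n$ stalls, because $x\cdot x^n p_n$ raises the $p$-index beyond what the hypothesis controls). The base case $m=0$ is the definition of $\mathcal{L}$. For the inductive step, using $x p_n=p_{n+1}+\alpha_n p_n+\beta_n p_{n-1}$ I would write, for $n>m$,
$$\mathcal{L}(x^m p_n)=\mathcal{L}(x^{m-1}p_{n+1})+\alpha_n \mathcal{L}(x^{m-1}p_n)+\beta_n \mathcal{L}(x^{m-1}p_{n-1}).$$
Each surviving index $n+1$, $n$, $n-1$ still strictly exceeds $m-1$ when $n>m$, so all three terms vanish by the inductive hypothesis at level $m-1$. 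The same identity taken at $n=m$ collapses to $\mathcal{L}(x^n p_n)=\beta_n \mathcal{L}(x^{n-1}p_{n-1})$, whence $\mathcal{L}(p_n^2)=\mathcal{L}(x^n p_n)=\beta_1\beta_2\cdots\beta_n$, which is nonzero precisely because each $\beta_j\neq 0$. This verifies the three defining conditions of formal orthogonality, with the hypothesis $\beta_n\neq 0$ entering exactly where the nondegeneracy condition $\mathcal{L}(p_n^2)\neq 0$ is needed, completing the proof.
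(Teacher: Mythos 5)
The paper itself contains no proof of this theorem: it is imported as Favard's theorem, with pointers to Krattenthaler, Viennot, and Wall, so there is no internal argument to compare yours against. Your proposal is a correct, self-contained version of the standard proof, and both halves check out. In the ``only if'' direction, expanding $xp_n=\sum_{k\le n+1}c_{n,k}p_k$, killing the coefficients $c_{n,k}$ for $k\le n-2$ via $\mathcal{L}(p_n\cdot xp_k)=0$ (degree of $xp_k$ is below $n$), and identifying $\beta_n=\mathcal{L}(p_n^2)/\mathcal{L}(p_{n-1}^2)\neq 0$ is exactly right; note only that the $n=0$ instance of the same expansion is what produces the initial condition $p_1=x-\alpha_0$, which the theorem also asserts, so it deserves a sentence. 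In the ``if'' direction, your two key moves are the correct ones: defining $\mathcal{L}$ on the basis $\{p_n\}$ by $\mathcal{L}(p_n)=\delta_{n,0}$, and proving $\mathcal{L}(x^mp_n)=0$ for $m<n$ by induction on the exponent $m$ rather than on $n$ --- this is indeed where a naive induction stalls, and your observation that the indices $n+1,n,n-1$ all stay above $m-1$ when $n>m$ is precisely what makes the induction close. The collapse $\mathcal{L}(x^np_n)=\beta_n\mathcal{L}(x^{n-1}p_{n-1})$ then gives $\mathcal{L}(p_n^2)=\beta_1\cdots\beta_n\neq 0$, so the hypothesis $\beta_n\neq 0$ enters exactly at the nondegeneracy condition, as you say. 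One phrasing slip: to pass from $\mathcal{L}(x^jp_n)=0$ ($j\le m<n$) to $\mathcal{L}(p_mp_n)=0$, the relevant fact is that $p_m$ is a linear combination of the monomials $x^j$ with $j\le m$, not that $\{p_0,\dots,p_m\}$ spans the polynomials of degree at most $m$ (both are true, but it is the monomial expansion of $p_m$ that is being used). These are cosmetic; the proof is sound.
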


\begin{theorem} \label{CF} \cite{Kratt} (Cf. \cite{Viennot}, Proposition 1, (7), on p. V-$5$, or \cite{Wall}, Theorem $51.1$). Let $(p_n(x))_{n\ge
0}$ be a sequence of monic polynomials, which is orthogonal with respect to some functional $\mathcal{L}$. Let
$$p_{n+1}=(x-\alpha_n)p_n(x)-\beta_n p_{n-1}(x), \quad \text{for}\quad n\ge 1, $$ be the corresponding three-term recurrence which is
guaranteed by Favard's theorem. Then the generating function
$$g(x)=\sum_{k=0}^{\infty} \mu_k x^k $$ for the moments $\mu_k=\mathcal{L}(x^k)$ satisfies
$$g(x)=\cfrac{\mu_0}{1-\alpha_0 x-
\cfrac{\beta_1 x^2}{1-\alpha_1 x -
\cfrac{\beta_2 x^2}{1-\alpha_2 x -
\cfrac{\beta_3 x^2}{1-\alpha_3 x -\cdots}}}}.$$
\end{theorem}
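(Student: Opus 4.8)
The plan is to realize the moment generating function as a formal Stieltjes transform of the functional and to match it against the convergents of the continued fraction. I would begin by extending $\mathcal{L}$ to act coefficientwise on formal power series in an auxiliary variable $t$, so that
$$g(x)=\sum_{k\ge 0}\mathcal{L}(t^k)\,x^k=\mathcal{L}_t\!\left(\frac{1}{1-xt}\right),$$
where $\mathcal{L}_t$ denotes $\mathcal{L}$ applied in the variable $t$; the identity is purely formal since $[x^k]\,(1-xt)^{-1}=t^k$. The first substantive step is to identify the denominators of the continued fraction. Writing $B_n(x)=x^n p_n(1/x)$ for the reversal of the monic orthogonal polynomial $p_n$, substitution of $x\mapsto 1/x$ into the three-term recurrence $p_{n+1}(x)=(x-\alpha_n)p_n(x)-\beta_n p_{n-1}(x)$ and multiplication by $x^{n+1}$ yields
$$B_{n+1}(x)=(1-\alpha_n x)B_n(x)-\beta_n x^2 B_{n-1}(x),$$
with $B_0=1$ and $B_1=1-\alpha_0 x$. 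This is exactly the denominator recurrence of the stated J-fraction, so the $B_n$ are the convergent denominators; note also $B_n(0)=1$, so each $B_n$ is a unit in $\mathbb{R}[[x]]$.

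Next I would produce the numerators. Since $p_n(1/x)-p_n(t)$ is divisible by $(1/x-t)$, the expression $\bigl(B_n(x)-x^n p_n(t)\bigr)/(1-xt)$ is a genuine polynomial in $x$ and $t$, and hence
$$A_n(x):=\mathcal{L}_t\!\left(\frac{B_n(x)-x^n p_n(t)}{1-xt}\right)$$
is a well-defined polynomial of degree $n-1$ in $x$. A short computation using the recurrences for $B_n$ and $p_n$ shows that $B_{n+1}-x^{n+1}p_{n+1}(t)$ equals $(1-\alpha_n x)\bigl(B_n-x^n p_n(t)\bigr)-\beta_n x^2\bigl(B_{n-1}-x^{n-1}p_{n-1}(t)\bigr)$ plus a remainder $x^n p_n(t)(1-xt)$; dividing by $1-xt$, applying $\mathcal{L}_t$, and using the orthogonality relation $\mathcal{L}(p_n)=0$ for $n\ge 1$ to annihilate the remainder gives
$$A_{n+1}(x)=(1-\alpha_n x)A_n(x)-\beta_n x^2 A_{n-1}(x).$$
Together with $A_0=0$ and $A_1=\mu_0$, both checked directly from the definition, this identifies $A_n/B_n$ as precisely the $n$-th convergent of the continued fraction in the statement.

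The final step links $g$ to these convergents. Subtracting the two defining formulas produces the clean identity
$$B_n(x)g(x)-A_n(x)=\mathcal{L}_t\!\left(\frac{x^n p_n(t)}{1-xt}\right)=x^n\sum_{j\ge 0}x^j\,\mathcal{L}\bigl(t^j p_n(t)\bigr).$$
Here orthogonality does the real work: $\mathcal{L}(t^j p_n(t))=0$ for $0\le j\le n-1$, so the inner sum starts at $j=n$ and the whole expression has order $2n$ in $x$, with leading coefficient $\mathcal{L}(t^n p_n(t))=\mathcal{L}(p_n^2)\ne 0$. Since $B_n$ is a unit, $g-A_n/B_n=O(x^{2n})$, so $g$ agrees with the convergents to arbitrarily high order. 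As the convergents converge in $\mathbb{R}[[x]]$ to the value of the J-fraction, successive convergents differing by a term whose order in $x$ grows without bound because each partial numerator carries a factor $\beta_k x^2$, we conclude that $g$ equals the continued fraction.

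The step I expect to demand the most care is the verification that the functionally-defined numerators $A_n$ satisfy the convergent recurrence: this is where the precise cancellation $\mathcal{L}(p_n)=0$ enters, and it is easy to mismanage the indexing of $\alpha_n,\beta_n$ or the base cases $A_0,A_1$. The only other delicate point is the bookkeeping for the formal extension of $\mathcal{L}$ to power series in $t$, which must be justified coefficientwise so that no genuine analytic convergence is claimed. I would remark that an equivalent route runs through Flajolet's combinatorial theory, in which $\mu_n$ counts weighted Motzkin paths and the J-fraction is their generating function; but the formal-power-series argument above is self-contained and makes the role of orthogonality transparent.
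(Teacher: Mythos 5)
Your proof is correct, but note that the paper itself never proves this theorem: it is reproduced from Krattenthaler's survey, with pointers to Viennot and to Wall, and is then used downstream as a black box. So there is no ``paper proof'' to match yours against; what you have reconstructed is, in substance, the classical argument behind those citations (essentially Wall's Theorem 51.1, recast in formal-power-series terms). Your three ingredients all check out: the reversed polynomials $B_n(x)=x^n p_n(1/x)$ satisfy $B_{n+1}=(1-\alpha_n x)B_n-\beta_n x^2 B_{n-1}$ and are the J-fraction denominators; your $A_n=\mathcal{L}_t\bigl((B_n(x)-x^n p_n(t))/(1-xt)\bigr)$ are genuine polynomials, since $x^n\bigl(p_n(1/x)-p_n(t)\bigr)=x^{n-1}(1-xt)q_n(1/x,t)$ with $q_n(u,t)=(p_n(u)-p_n(t))/(u-t)$, and they satisfy the same recurrence with $A_0=0$, $A_1=\mu_0$, the remainder term $x^n p_n(t)$ dying under $\mathcal{L}_t$ because $\mathcal{L}(p_n)=0$ for $n\ge 1$; and orthogonality gives $B_n g-A_n=\mathcal{L}(p_n^2)\,x^{2n}+O(x^{2n+1})$ with $\mathcal{L}(p_n^2)\neq 0$, so $g$ agrees with each convergent to order $2n$ and hence equals the $x$-adic limit of the convergents, which is the value of the J-fraction. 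The closest thing the paper offers to a proof is the direct verification following Proposition \ref{Moments}, which covers only the Riordan-array case of eventually constant coefficients $a_1,a,a,\ldots$ and $b_1,b,b,\ldots$: there one solves a quadratic functional equation for $f=\text{Rev}\,\frac{x}{1+ax+bx^2}$ and exploits the periodicity of the tail of the fraction. That shortcut cannot reach the general statement (the Hermite applications later in the paper need $\beta_n$ growing linearly in $n$), which is exactly what your convergent-based argument, valid for arbitrary $(\alpha_n)$ and $(\beta_n)$, supplies; the alternative route in the cited literature is Viennot's Motzkin-path combinatorics, which you rightly flag as equivalent but do not need.
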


Given a family of monic orthogonal polynomials
$$p_{n+1}(x)=(x-\alpha_n)p_n(x)-\beta_n p_{n-1}(x), \qquad
p_0(x)=1,\qquad p_1(x)=x-\alpha_0,$$
we can write $$p_n(x)=\sum_{k=0}^n a_{n,k}x^k.$$ Then we have
$$\sum_{k=0}^{n+1}a_{n+1,k}x^k=(x-\alpha_n)\sum_{k=0}^n
a_{n,k}x^k - \beta_n
\sum_{k=0}^{n-1}a_{n-1,k}x^k$$ from which we deduce
\begin{equation}\label{OP_1}a_{n+1,0}=-\alpha_n
a_{n,0}-\beta_n
a_{n-1,0}\end{equation}
and \begin{equation}\label{OP_2}a_{n+1,k}=a_{n,k-1}-\alpha_n
a_{n,k}-\beta_n a_{n-1,k}\end{equation}
We note that if $\alpha_n$ and
 $\beta_n$ are constant, equal to $\alpha$ and $\beta$, respectively, then the sequence $(1,-\alpha,-\beta,0,0,\ldots)$ forms an $A$-sequence for the coefficient array.
The question immediately arises as to the conditions under which a Riordan array $(g,f)$ can be the coefficient array
 of a family of orthogonal polynomials. A partial answer is given by the following proposition.

\begin{proposition} Every Riordan array of the form
$$\left(\frac{1}{1+rx+sx^2},\frac{x}{1+rx+sx^2}\right)$$ is the coefficient array of a family of monic orthogonal polynomials.
\end{proposition}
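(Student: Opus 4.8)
The plan is to read the rows of the array directly as a sequence of monic polynomials, to show that they obey a three-term recurrence with \emph{constant} coefficients, and then to invoke Favard's theorem (Theorem~\ref{ThreeT}). Write $g(x)=\frac{1}{1+rx+sx^2}$ and $f(x)=\frac{x}{1+rx+sx^2}$, so that the general term of the array is $a_{n,k}=[x^n]g(x)f(x)^k$, and let $p_n(y)=\sum_{k=0}^n a_{n,k}y^k$ be the polynomial recorded by the $n$-th row. First I would note that each $p_n$ is monic of degree $n$: since $f=xg$ we have $gf^n=x^n g^{n+1}$, whence $a_{n,n}=[x^n](x^n g^{n+1})=[x^0]g^{n+1}=1$, while lower-triangularity gives $a_{n,k}=0$ for $k>n$, so $\deg p_n=n$.

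The key step is to handle the whole family at once through its generating function. Using the bi-variate generating function $\frac{g(x)}{1-yf(x)}$ of a Riordan array recorded above, and now reading $y$ as the polynomial variable, I obtain
$$\sum_{n\ge 0}p_n(y)x^n=\frac{g(x)}{1-yf(x)}=\frac{1}{1+rx+sx^2-yx}=\frac{1}{1-(y-r)x+sx^2}.$$
This one algebraic simplification (clear the inner denominator $1+rx+sx^2$) is really the heart of the matter: the two-parameter Riordan data collapses to the Chebyshev-type kernel $\frac{1}{1-(y-r)x+sx^2}$, from which everything follows.

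From here the recurrence falls out by clearing denominators. Multiplying across by $1-(y-r)x+sx^2$ and comparing the coefficient of $x^{n+1}$ on each side gives
$$p_{n+1}(y)=(y-r)p_n(y)-s\,p_{n-1}(y),\qquad n\ge 1,$$
while the constant and linear terms yield the initial data $p_0(y)=1$ and $p_1(y)=y-r$. Thus the $p_n$ satisfy the monic three-term recurrence $p_{n+1}(y)=(y-\alpha_n)p_n(y)-\beta_n p_{n-1}(y)$ with the \emph{constant} coefficients $\alpha_n=r$ and $\beta_n=s$. It is worth stressing that this is the natural route rather than trying to read off a finite $A$-sequence of $(g,f)$ itself: the Riordan $A$-sequence of this array is \emph{not} a polynomial in general, so the constancy of the recurrence coefficients is visible through the generating function but not through a terminating $A$-sequence.

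Finally I would apply Favard's theorem (Theorem~\ref{ThreeT}): the $p_n$ are monic with $\deg p_n=n$ and obey the displayed recurrence, so they are (formally) orthogonal precisely when $\beta_n=s\ne 0$ for every $n\ge 1$. I expect the only genuine subtlety to be this non-degeneracy condition. When $s=0$ the recurrence persists but degenerates to $p_n(y)=(y-r)^n$, which is not an orthogonal family; so the standing assumption $s\ne 0$ (implicit in asking for a bona fide monic orthogonal sequence) is exactly what guarantees that the functional produced by Favard's theorem is non-trivial. Everything else is routine coefficient extraction.
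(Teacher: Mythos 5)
Your proof is correct, but it reaches the three-term recurrence by a genuinely different route than the paper. The paper derives the row recurrence $a_{n+1,k}=a_{n,k-1}-ra_{n,k}-sa_{n-1,k}$ structurally, by invoking the $C$-sequence characterization of such arrays (citing Jin): since $\frac{x}{1+rx+sx^2}=\frac{x}{1-xC(x)}$ forces $C(x)=-r-sx$, the defining recurrence of the array with respect to its $C$-sequence is exactly the recurrence above, and then it "works backwards" to the polynomial identity $p_{n+1}(x)=(x-r)p_n(x)-sp_{n-1}(x)$ before applying Favard's theorem (Theorem~\ref{ThreeT}). You instead collapse the bivariate generating function $\frac{g(x)}{1-yf(x)}$ to the kernel $\frac{1}{1-(y-r)x+sx^2}$ and read off the same recurrence, with the same initial data, by clearing denominators; then you apply Favard exactly as the paper does. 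Your route is more self-contained (no appeal to the $C$-sequence machinery), and it makes the Chebyshev-type nature of these polynomials visible at once -- which is precisely the content of the paper's \emph{next} proposition, identifying the rows with modified Chebyshev polynomials of the second kind. The paper's route, by contrast, stays inside the sequence-characterization framework for Riordan arrays that it develops throughout. Two further points in your favour: your parenthetical warning is apt, since the genuine $A$-sequence of this array, $A(x)=\frac{1}{2}\bigl(1-rx+\sqrt{(1-rx)^2-4sx^2}\bigr)$, is indeed not polynomial (the paper's earlier remark that $(1,-\alpha,-\beta,0,\dots)$ "forms an $A$-sequence" is loose terminology for what its proof correctly treats as a $C$-sequence); and you make explicit the nondegeneracy requirement $s\neq 0$ demanded by Favard's theorem, a hypothesis the paper's statement and proof both leave tacit.
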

\begin{proof} The array $\left(\frac{1}{1+rx+sx^2},\frac{x}{1+rx+sx^2}\right)$ \cite{Jin} has a $C$-sequence $C(x)=\sum_{n\ge
0}c_n
x^n$ given by
$$\frac{x}{1+rx+sx^2}=\frac{x}{1-xC(x)},$$ and thus $$C(x)=-r-sx.$$ This means that the Riordan array
$\left(\frac{1}{1+rx+sx^2},\frac{x}{1+rx+sx^2}\right)$ is determined by the fact that
$$a_{n+1,k}=a_{n,k-1}+\sum_{i \ge 0} c_i a_{n-i,k} \quad \text{for $n,k=0,1,2,\ldots$}$$ where
$a_{n,-1}=0$.
In the case of $\left(\frac{1}{1+rx+sx^2},\frac{x}{1+rx+sx^2}\right)$ we have $$a_{n+1,k}=a_{n,k-1}-ra_{n,k}-sa_{n-1,k}.$$
Working backwards, this now ensures that
$$p_{n+1}(x)=(x-r)p_n(x)-s p_{n-1}(x),$$ where
$p_n(x)=\sum_{k=0}^n a_{n,k}x^n$. The result now follows from Theorem \ref{ThreeT}.
\end{proof}
We note that in this case the three-term recurrence coefficients $\alpha_n$ and $\beta_n$ are constants.
We have in fact the following proposition (see the next section for information on the Chebyshev polynomials).
\begin{proposition} The Riordan array $\left(\frac{1}{1+rx+sx^2},\frac{x}{1+rx+sx^2}\right)$ is the coefficient array of the modified
Chebyshev polynomials of the second kind given by
$$P_n(x)=(\sqrt{s})^n U_n\left(\frac{x-r}{2\sqrt{s}}\right), \quad n=0,1,2,\ldots$$
\end{proposition}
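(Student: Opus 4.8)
The plan is to verify that the stated modified Chebyshev polynomials satisfy exactly the same three-term recurrence, with the same initial conditions, as the monic orthogonal polynomials produced by the Riordan array in the previous proposition. Equality of the two families then follows by induction, and hence the given Riordan array must be their common coefficient array.

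First I would assemble the two ingredients. From the previous proposition, the array $\left(\frac{1}{1+rx+sx^2},\frac{x}{1+rx+sx^2}\right)$ is the coefficient array of the monic family $p_n(x)$ defined by
$$p_{n+1}(x)=(x-r)p_n(x)-s\,p_{n-1}(x),\qquad p_0(x)=1,\quad p_1(x)=x-r,$$
so the recurrence has the constant coefficients $\alpha_n=r$ and $\beta_n=s$. On the other hand, the Chebyshev polynomials of the second kind obey $U_{n+1}(t)=2t\,U_n(t)-U_{n-1}(t)$ with $U_0(t)=1$ and $U_1(t)=2t$.

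Next I would set $t=\frac{x-r}{2\sqrt{s}}$ and $P_n(x)=(\sqrt{s})^n U_n(t)$, then substitute into the Chebyshev recurrence. Multiplying the recurrence for $U_n(t)$ through by $(\sqrt{s})^{n+1}$ and using $2t=\frac{x-r}{\sqrt{s}}$, the factors of $\sqrt{s}$ recombine into integer powers of $s$, yielding
$$P_{n+1}(x)=(x-r)P_n(x)-s\,P_{n-1}(x).$$
A quick check of the base cases gives $P_0(x)=1$ and $P_1(x)=\sqrt{s}\cdot 2t=x-r$, matching $p_0$ and $p_1$. Hence $P_n$ and $p_n$ satisfy the same recurrence with the same initial data, so $P_n(x)=p_n(x)$ for all $n$ by induction, which is precisely the claim.

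The computation is entirely routine, and the only point requiring a word of care is the bookkeeping of the half-integer powers of $s$. Although $\sqrt{s}$ appears in the intermediate expressions, I would observe that since $U_n(t)$ has the same parity as $n$, the prefactor $(\sqrt{s})^n$ pairs with the odd powers of $\sqrt{s}$ coming from $t^{n-2j}=\left(\frac{x-r}{2\sqrt{s}}\right)^{n-2j}$, so that each term of $P_n(x)$ carries an integer power $s^j$. Thus $P_n(x)$ is a genuine polynomial in $x$ with coefficients polynomial in $r$ and $s$, and the identity holds formally without any positivity assumption on $s$.
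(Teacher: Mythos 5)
Your proof is correct, but it takes a different route from the paper's. The paper proves this proposition by computing the production matrix of the inverse array $\left(\frac{1}{1+rx+sx^2},\frac{x}{1+rx+sx^2}\right)^{-1}$, observing that it is the constant tridiagonal matrix with $r$ on the diagonal, $s$ on the subdiagonal and $1$ above, and then citing Elouafi's article on tridiagonal matrices: the rows of the coefficient array are the characteristic polynomials of the principal submatrices of that tridiagonal matrix, and such characteristic polynomials are known to be the modified Chebyshev polynomials $(\sqrt{s})^nU_n\left(\frac{x-r}{2\sqrt{s}}\right)$. You instead bypass production matrices entirely: you take the three-term recurrence $p_{n+1}(x)=(x-r)p_n(x)-s\,p_{n-1}(x)$, $p_0=1$, $p_1=x-r$, already established for this array in the preceding proposition, check that $P_n(x)=(\sqrt{s})^nU_n\left(\frac{x-r}{2\sqrt{s}}\right)$ satisfies the identical recurrence and initial data, and conclude $P_n=p_n$ by induction. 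Your argument is more elementary and self-contained (no external citation needed, and your parity observation correctly shows the identity is formal in $s$, requiring no positivity), while the paper's argument ties the result into the production-matrix framework that is the organizing theme of the rest of the paper and generalizes more readily to non-constant tridiagonal data.
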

\begin{proof}
The production array of $\left(\frac{1}{1+rx+sx^2},\frac{x}{1+rx+sx^2}\right)^{-1}$ is given by
\begin{displaymath}\left(\begin{array}{ccccccc} r & 1 &
0
& 0 & 0 & 0 & \ldots \\s & r & 1 & 0 & 0 & 0 & \ldots \\ 0 & s
& r & 1 & 0 &
0 & \ldots \\ 0 & 0 & s & r & 1 & 0 & \ldots \\ 0 & 0 & 0
& s & r & 1 & \ldots \\0 & 0 & 0 & 0 & s & r
&\ldots\\
\vdots &
\vdots & \vdots & \vdots & \vdots & \vdots &
\ddots\end{array}\right).\end{displaymath}
The result is now a consequence of the article \cite{Elouafi} by Elouafi, for instance.
\end{proof}

The complete answer can be found by considering the associated production matrix of a Riordan arrray, in the following sense.

The
concept of a \emph{production matrix} \cite{DeutschShap, ProdMat_0,
ProdMat}
is a general one, but for this work we find it convenient to
review it in
the context of Riordan arrays. Thus let $P$ be an infinite
matrix (most often it will have integer entries). Letting
$\mathbf{r}_0$
be the row vector
$$\mathbf{r}_0=(1,0,0,0,\ldots),$$ we define $\mathbf{r}_i=\mathbf{r}_{i-1}P$, $i \ge 1$.
Stacking these rows leads to another infinite matrix which we
denote by
$A_P$. Then $P$ is said to be the \emph{production matrix} for
$A_P$.

\noindent If we let $$u^T=(1,0,0,0,\ldots,0,\ldots)$$ then we
have $$A_P=\left(\begin{array}{c}
u^T\\u^TP\\u^TP^2\\\vdots\end{array}\right)$$ and
$$\bar{I}A_P=A_PP$$ where $\bar{I}=(\delta_{i+1,j})_{i,j \ge 0}$ (where
$\delta$ is the usual Kronecker symbol):
\begin{displaymath} \bar{I}=\left(\begin{array}{ccccccc} 0 & 1
& 0 & 0 & 0 & 0 & \ldots \\0 & 0 & 1 & 0 & 0 & 0 & \ldots \\
0 & 0 & 0 & 1
& 0 & 0 & \ldots \\ 0 & 0 & 0 & 0 & 1 & 0 & \ldots \\ 0 & 0 &
0
& 0 & 0 & 1 & \ldots \\0 & 0  & 0 & 0 & 0 & 0 &\ldots\\ \vdots
& \vdots &
\vdots & \vdots & \vdots & \vdots &
\ddots\end{array}\right).\end{displaymath}
We have
\begin{equation}P=A_P^{-1}\bar{I}A_P.\end{equation} Writing
$\overline{A_P}=\bar{I}A_P$, we can write this equation as \begin{equation}P=A_P^{-1}\overline{A_P}.\end{equation} Note that $\overline{A_P}$ is a ``beheaded'' version of $A_P$; that is, it is $A_P$ with the first row removed.

The production matrix $P$ is sometimes \cite{P_W, Shapiro_bij} called the Stieltjes matrix $S_{A_P}$
associated to $A_P$. Other examples of the use of production matrices can be found in \cite{Arregui}, for instance.

\noindent The sequence formed by the row sums of $A_P$ often
has combinatorial significance and is called the sequence
associated to $P$. Its general
term $a_n$ is given by $a_n = u^T P^n e$ where
$$e=\left(\begin{array}{c}1\\1\\1\\\vdots\end{array}\right)$$
In the context of Riordan
arrays, the production matrix associated to a proper Riordan
array takes on a special form\,:

 \begin{proposition} \label{RProdMat}
\cite[Proposition 3.1]{ProdMat}\label{AZ} Let $P$ be
an infinite production matrix and let $A_P$ be the matrix
induced by $P$. Then $A_P$ is an (ordinary) Riordan matrix if
and only if $P$ is
of the form \begin{displaymath} P=\left(\begin{array}{ccccccc}
\xi_0 & \alpha_0 & 0 & 0 & 0 & 0 & \ldots \\\xi_1 & \alpha_1 &
\alpha_0 & 0 &
0 & 0 & \ldots \\ \xi_2 & \alpha_2 & \alpha_1 & \alpha_0 & 0 &
0 & \ldots \\ \xi_3 & \alpha_3 & \alpha_2 & \alpha_1 &
\alpha_0
& 0 & \ldots
\\ \xi_4 & \alpha_4 & \alpha_3 & \alpha_2 & \alpha_1 &
\alpha_0
& \ldots \\\xi_5 & \alpha_5  & \alpha_4 & \alpha_3 & \alpha_2
&
\alpha_1
&\ldots\\ \vdots & \vdots & \vdots & \vdots & \vdots & \vdots
&
\ddots\end{array}\right)\end{displaymath} where $\xi_0 \neq 0$, $\alpha_0 \neq 0$. Moreover, columns $0$
and $1$ of
the matrix $P$ are the $Z$- and $A$-sequences,
respectively, of the Riordan array $A_P$. \end{proposition}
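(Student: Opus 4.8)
The plan is to prove Proposition \ref{RProdMat} by establishing both directions of the equivalence using the sequence characterization of Riordan arrays (Proposition \ref{Char}) together with the defining relation $\overline{A_P} = \bar{I} A_P = A_P P$, which encodes how the production matrix generates successive rows. The key observation is that the relation $A_P P = \bar{I} A_P$ says precisely that row $n+1$ of $A_P$ is obtained from row $n$ by right-multiplication by $P$; writing this out entrywise will connect the entries of $P$ directly to the $A$- and $Z$-sequence recurrences of Proposition \ref{Char}.

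**First I would** fix notation: write $A_P = [d_{n,k}]_{n,k\ge 0}$ with $d_{0,0}=1$ and $d_{0,k}=0$ for $k\ge 1$ (forced by $u^T=\mathbf{r}_0$), and write $P=[p_{i,j}]$. The relation $\bar{I}A_P = A_P P$ gives, in the $(n,k)$ entry, the identity
\begin{equation}\label{prodrec}
d_{n+1,k}=\sum_{j\ge 0} d_{n,j}\, p_{j,k}.
\end{equation}
For the reverse implication, I would assume $P$ has the displayed banded lower-Hessenberg form, so that $p_{j,0}=\xi_j$, and $p_{j,k}=\alpha_{j-k+1}$ for $k\ge 1$ (with $\alpha_i=0$ for $i<0$, making the matrix lower-Hessenberg with constant descending diagonals above the first column). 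Substituting into \eqref{prodrec} yields $d_{n+1,0}=\sum_j \xi_j d_{n,j}$ and $d_{n+1,k}=\sum_{j\ge k-1} \alpha_{j-k+1} d_{n,j}=\sum_{i\ge 0}\alpha_i\, d_{n,k-1+i}$ for $k\ge 1$. After the index shift these are exactly the $A$-sequence relation $d_{n+1,k+1}=\sum_{i\ge 0} a_i d_{n,k+i}$ (with $a_i=\alpha_i$) and the $Z$-sequence relation $d_{n+1,0}=\sum_j z_j d_{n,j}$ (with $z_j=\xi_j$) of Proposition \ref{Char}. Since $\alpha_0\ne 0$ gives $a_0\ne 0$ and $\xi_0\ne 0$ gives $z_0\ne 0$, Proposition \ref{Char} certifies that $A_P$ is a Riordan array, and the identification of columns $0$ and $1$ of $P$ with the $Z$- and $A$-sequences falls out immediately from $p_{j,0}=\xi_j=z_j$ and $p_{j,1}=\alpha_{j}=a_{j}$.

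**For the converse,** I would assume $A_P$ is a proper Riordan array and recover the structure of $P$ from $P=A_P^{-1}\overline{A_P}$, or equivalently from \eqref{prodrec}. Applying Proposition \ref{Char}, the entries $d_{n,k}$ satisfy the $A$- and $Z$-sequence recurrences; comparing these term by term with \eqref{prodrec} forces $p_{j,0}=z_j$ and $p_{j,k}=a_{j-k+1}$ for all $k\ge 1$, which is exactly the claimed banded form with $\xi_j=z_j$ and $\alpha_i=a_i$. The point requiring a little care is that \eqref{prodrec} must hold for \emph{all} $n\ge 0$ simultaneously while the entries $p_{j,k}$ are independent of $n$; I would argue that since $A_P$ is lower triangular with nonzero diagonal, the rows $\mathbf{r}_0,\mathbf{r}_1,\ldots,\mathbf{r}_n$ are linearly independent, so the coefficients expressing $\mathbf{r}_{n+1}$ (via $\overline{A_P}$) in terms of the structure are uniquely determined and the $n$-independence of the recurrence coefficients in Proposition \ref{Char} transfers to the $n$-independence of the $p_{j,k}$.

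**The main obstacle** I anticipate is the bookkeeping of index shifts and the careful handling of the convention $\alpha_i=0$ for $i<0$ that encodes the lower-Hessenberg (only one nonzero superdiagonal) shape of $P$; getting the alignment between the $A$-sequence indexing $d_{n+1,k+1}=\sum_j a_j d_{n,k+j}$ and the matrix-product indexing of \eqref{prodrec} exactly right is where errors would creep in. A secondary point is justifying that the single superdiagonal entry $\alpha_0=a_0\ne 0$ is consistent with $f_1=1$ (the normalization assumed throughout the paper), but this is routine. Since this is cited as \cite[Proposition 3.1]{ProdMat}, I would keep the argument concise, emphasizing that it is essentially a transcription of Proposition \ref{Char} through the identity $\bar{I}A_P=A_P P$.
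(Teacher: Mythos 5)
Your proposal is essentially correct, but there is nothing in the paper to compare it against: the paper does not prove this proposition at all, it simply imports it as Proposition 3.1 of Deutsch--Ferrari--Rinaldi \cite{ProdMat}. Your route --- reading the defining relation $\bar{I}A_P=A_PP$ entrywise as $d_{n+1,k}=\sum_j d_{n,j}p_{j,k}$ and matching it against the $A$- and $Z$-sequence characterization of Proposition \ref{Char} --- is therefore a genuine, self-contained addition, and it is the natural argument given the tools the paper supplies. Two points need tightening before it is airtight. First, in the forward direction Proposition \ref{Char} assumes the matrix under consideration is infinite lower triangular, so before invoking it you must check that the Hessenberg shape of $P$ forces $\mathbf{r}_n$ to be supported on columns $0,\ldots,n$, with $d_{n,n}=\alpha_0^n\neq 0$; this is a one-line induction on $n$, but it is a hypothesis you are currently using silently. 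Second, in the converse, ``linear independence of the rows'' is the right instinct but not quite the right statement: what you need is that $A_P$ is invertible, i.e., that the rows $\mathbf{r}_0,\ldots,\mathbf{r}_n$ span the same space as the first $n+1$ coordinate vectors (equivalently, $A_Pv=0$ forces $v=0$). The cleanest phrasing is to assemble from Proposition \ref{Char} the matrix $Q$ with $q_{j,0}=z_j$ and $q_{j,k}=a_{j-k+1}$ for $k\ge 1$; the two recurrences say precisely that $A_PQ=\bar{I}A_P=A_PP$, and left-multiplying by $A_P^{-1}$ (legitimate, since all products involved are finite sums for these row-finite matrices) gives $Q=P$, which is the claimed form together with the identification of columns $0$ and $1$. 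Finally, a cosmetic caveat you half-noticed: since the construction gives $d_{n,n}=\alpha_0^n$, the paper's standing normalization $f_1=1$ actually forces $\alpha_0=1$; with $\alpha_0\neq 0$ arbitrary, the proposition is about Riordan arrays in the unnormalized sense ($f_1\neq 0$), and your appeal to Proposition \ref{Char} should be understood in that generality.
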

\noindent We recall that we have
$$A(x)=\frac{x}{\bar{f}(x)}, \quad Z(x)=\frac{1}{\bar{f}(x)}\left(1-\frac{1}{g(\bar{f}(x))}\right).$$
\begin{example} We consider
the Riordan array $\mathbf{L}$ where
$$L^{-1}=\left(\frac{1-\lambda x - \mu x^2}{1+ax+bx^2},
\frac{x}{1+ax+bx^2}\right).$$ The production matrix
(Stieltjes matrix) of $$L=\left(\frac{1-\lambda x - \mu
x^2}{1+ax+bx^2}, \frac{x}{1+ax+bx^2}\right)^{-1}$$ is
given by \begin{displaymath}
P=S_L=\left(\begin{array}{ccccccc}
a+\lambda & 1 & 0 & 0 & 0 & 0 & \ldots \\b+\mu & a & 1 & 0 & 0
& 0 & \ldots
\\ 0 & b & a & 1 & 0 & 0 & \ldots \\ 0 & 0 & b & a & 1 & 0 &
\ldots \\ 0 & 0 & 0 & b & a & 1 & \ldots \\0 & 0  & 0 & 0 & b
&
a &\ldots\\
\vdots & \vdots & \vdots & \vdots & \vdots & \vdots &
\ddots\end{array}\right).\end{displaymath} We note that since
\begin{eqnarray*}L^{-1}&=&\left(\frac{1-\lambda x - \mu
x^2}{1+ax+bx^2}, \frac{x}{1+ax+bx^2}\right)\\ &=&(1-\lambda
x-\mu x^2, x)\cdot\left(\frac{1
}{1+ax+bx^2}, \frac{x}{1+ax+bx^2}\right),\end{eqnarray*} we
have $$L=\left(\frac{1-\lambda x - \mu x^2}{1+ax+bx^2},
\frac{x}{1+ax+bx^2}\right)^{-1}=\left(\frac{1 }{1+ax+bx^2},
\frac{x}{1+ax+bx^2}\right)^{-1}\cdot\left(\frac{1}{1-\lambda
x-\mu x^2},x\right).$$
If we now let
$$L_1=\left(\frac{1}{1+ax},\frac{x}{1+ax}\right)\cdot L,$$
then
 \cite{Triple} we obtain that the Stieltjes matrix for
$L_1$ is given by
\begin{displaymath}S_{L_1}=\left(\begin{array}{ccccccc}
\lambda
& 1 & 0 & 0 & 0 & 0 & \ldots \\b+\mu & 0 & 1 & 0 & 0 & 0 &
\ldots \\ 0 & b & 0 & 1 & 0 & 0 & \ldots \\ 0 & 0 & b & 0 & 1
&
0 & \ldots \\ 0 & 0 & 0 & b & 0 & 1 & \ldots \\0 & 0  & 0 & 0
&
b & 0
&\ldots\\ \vdots & \vdots & \vdots & \vdots & \vdots & \vdots
&
\ddots\end{array}\right).\end{displaymath}
\end{example}

\noindent We have in fact the
following
general result \cite{Triple}\,: \begin{proposition} If
$L=(g(x),f(x))$ is a Riordan array and $P=S_L$ is tridiagonal,
then necessarily
\begin{displaymath} P=S_L=\left(\begin{array}{ccccccc} a_1 & 1
& 0 & 0 & 0 & 0 & \ldots \\b_1 & a & 1 & 0 & 0 & 0 & \ldots \\
0 & b & a & 1
& 0 & 0 & \ldots \\ 0 & 0 & b & a & 1 & 0 & \ldots \\ 0 & 0 &
0
& b & a & 1 & \ldots \\0 & 0  & 0 & 0 & b & a &\ldots\\ \vdots
& \vdots &
\vdots & \vdots & \vdots & \vdots &
\ddots\end{array}\right)\end{displaymath} where
$$f(x)=\text{Rev}\frac{x}{1+ax+bx^2}\qquad\text{and}\qquad
g(x)=\frac{1}{1-a_1x-b_1 x f},$$ and vice-versa.
\end{proposition}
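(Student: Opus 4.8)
The plan is to read the $A$- and $Z$-sequences of $L$ straight off the shape of $P$ and then invert the relations in \eqref{AZ_eq}. Since $L=(g,f)$ is a Riordan array, Proposition~\ref{RProdMat} tells us that $P=S_L$ has the displayed (lower Hessenberg) form, with the $Z$-sequence $(\xi_0,\xi_1,\xi_2,\ldots)$ filling column~$0$ and the $A$-sequence $(\alpha_0,\alpha_1,\alpha_2,\ldots)$ filling column~$1$; more precisely the entry of $P$ in position $(i,j)$ with $j\ge 1$ is $\alpha_{i-j+1}$ (and $0$ when $i<j-1$). In particular the superdiagonal is constant and equal to $\alpha_0$, which the normalization $f_1=1$ forces to be $1$.

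First I would impose tridiagonality on this template. Any entry $(i,j)$ with $j\ge 1$ and $i-j\ge 2$ equals $\alpha_{i-j+1}$ with $i-j+1\ge 3$, so demanding that it vanish is exactly the condition $\alpha_m=0$ for all $m\ge 3$; likewise the column-$0$ entries $\xi_i$ with $i\ge 2$ sit strictly below the subdiagonal and must vanish. Writing $\alpha_1=a$, $\alpha_2=b$, $\xi_0=a_1$ and $\xi_1=b_1$, this shows that $P$ being tridiagonal is equivalent to
$$A(x)=1+ax+bx^2,\qquad Z(x)=a_1+b_1x,$$
and it fixes $P$ to be precisely the matrix in the statement.

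Next I would recover $f$ and $g$ from \eqref{AZ_eq}. From $A(x)=x/\bar f(x)$ together with $A(x)=1+ax+bx^2$ we obtain $\bar f(x)=x/(1+ax+bx^2)$, hence $f=\mathrm{Rev}\,\frac{x}{1+ax+bx^2}$, which is the first claimed formula. For $g$, I would evaluate the identity $Z(x)=\frac{1}{\bar f(x)}\bigl(1-\frac{1}{g(\bar f(x))}\bigr)$ at $f(x)$ in place of $x$; using $\bar f(f(x))=x$ the right-hand side collapses to $x\,Z(f(x))=1-1/g(x)$, so that
$$g(x)=\frac{1}{1-x\,Z(f(x))}=\frac{1}{1-a_1x-b_1x\,f(x)},$$
since $Z(f(x))=a_1+b_1f(x)$. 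This is the second claimed formula.

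For the converse I would run the computation in reverse: starting from the stated $f$ and $g$, the first relation in \eqref{AZ_eq} returns $A(x)=1+ax+bx^2$ at once, and substituting $g$ into the second relation (again using $f(\bar f(x))=x$) returns $Z(x)=a_1+b_1x$; feeding these two finite sequences back into the template of Proposition~\ref{RProdMat} reproduces the displayed tridiagonal $P$. I expect no genuine obstacle here: the only points needing care are the indexing that matches each diagonal of $P$ to the correct coefficient of $A$ or $Z$, and the substitution $x\mapsto f(x)$ that turns the $Z$-relation into the closed form for $g$.
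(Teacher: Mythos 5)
Your argument is correct and complete. An important contextual point: the paper offers no proof of this proposition at all --- it is quoted as a known result from Peart and Woodson \cite{Triple} --- so there is no internal proof to compare against; your derivation in effect supplies the missing argument, using only tools the paper already has on hand. Specifically, you use Proposition \ref{RProdMat} to identify column $0$ of $P$ with the $Z$-sequence and every column $j\ge 1$ with a shifted copy of the $A$-sequence, so that tridiagonality of $P$ is equivalent to $A(x)=1+ax+bx^2$ and $Z(x)=a_1+b_1x$ (the superdiagonal value $\alpha_0=1$ being forced by the standing normalization $f_1=1$); inverting the first relation of (\ref{AZ_eq}) gives $\bar{f}(x)=x/(1+ax+bx^2)$, i.e.\ $f=\text{Rev}\,\frac{x}{1+ax+bx^2}$, and substituting $x\mapsto f(x)$ into the second relation collapses it to $g(x)=1/(1-a_1x-b_1xf(x))$. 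The converse, as you say, is the same computation read in reverse, and the uniqueness of the production matrix makes that legitimate. The only step deserving an explicit word is the substitution itself: it uses $\bar{f}(f(x))=x$ rather than the defining identity $f(\bar{f}(x))=x$, i.e.\ that reversion is a two-sided compositional inverse, which holds for formal power series with $f(0)=0$ and $f_1=1$. With that noted, your proof is a clean, self-contained replacement for the paper's citation.
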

\noindent This leads to the
important corollary \begin{corollary}\label{Cor} If $L=(g(x),f(x))$ is a
Riordan array and $P=S_L$ is tridiagonal, with
\begin{equation}\label{P_mat}
P=S_L=\left(\begin{array}{ccccccc} a_1 & 1 & 0 & 0 & 0 & 0 &
\ldots \\b_1 & a & 1 & 0 & 0 & 0 & \ldots \\ 0 & b & a & 1 & 0
& 0 & \ldots \\
0 & 0 & b & a & 1 & 0 & \ldots \\ 0 & 0 & 0 & b & a & 1 &
\ldots \\0 & 0  & 0 & 0 & b & a &\ldots\\ \vdots & \vdots &
\vdots & \vdots &
\vdots & \vdots & \ddots\end{array}\right),\end{equation}
then $L^{-1}$ is the coefficient array of the family of
orthogonal polynomials
$p_n(x)$ where $p_0(x)=1$, $p_1(x)=x-a_1$, and
$$p_{n+1}(x)=(x-a)p_n(x)-b_n p_{n-1}(x), \qquad n \ge 2,$$
where $b_n$ is the sequence
$0,b_1,b,b,b,\ldots$. \end{corollary}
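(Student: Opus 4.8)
The plan is to identify the production matrix $P=S_L$ with the matrix representing multiplication by $x$ in the polynomial basis furnished by the rows of $M:=L^{-1}$. I would write $\mathbf{x}=(1,x,x^2,\ldots)^{T}$ for the column vector of monomials and define $\mathbf{p}(x)=(p_0(x),p_1(x),\ldots)^{T}$ by $\mathbf{p}(x)=M\mathbf{x}$, so that $p_n(x)=\sum_k m_{n,k}x^k$ is exactly the polynomial whose coefficients form the $n$-th row of $L^{-1}$. Since $L=(g,f)$ is a proper Riordan array it is lower triangular with $L_{n,n}=[x^n]\,g f^n=1$; hence $M=L^{-1}$ is lower triangular with unit diagonal, each $p_n(x)$ is monic of degree $n$, and in particular $p_0(x)=1$. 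This already pins down the claimed initial data once the recurrence is in hand.

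The key step I would carry out is the operator identity $x\mathbf{p}(x)=P\mathbf{p}(x)$. First, multiplication by $x$ acts on monomials as $x\mathbf{x}=\bar{I}\mathbf{x}$, since $(\bar{I}\mathbf{x})_i=\sum_j\delta_{i+1,j}x^j=x^{i+1}$. Next, because $P$ is the production matrix of $A_P=L$, the defining relation $\bar{I}L=LP$ gives $P=L^{-1}\bar{I}L=M\bar{I}L$. Combining these, and using $\mathbf{x}=M^{-1}\mathbf{p}(x)=L\mathbf{p}(x)$, I obtain
$$x\mathbf{p}(x)=xM\mathbf{x}=M(x\mathbf{x})=M\bar{I}\mathbf{x}=M\bar{I}L\,\mathbf{p}(x)=P\mathbf{p}(x).$$
All products here are well defined entrywise: $(\bar{I}L)_{i,j}=L_{i+1,j}$ vanishes for $j>i+1$, and $M$ is lower triangular, so every entry of $M\bar{I}L$ is a finite sum. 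Reading the identity row by row yields $x\,p_n(x)=\sum_m P_{n,m}\,p_m(x)$.

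Then I would simply transcribe the rows of the tridiagonal matrix $P$ in \eqref{P_mat}. Row $0$ reads $x p_0=a_1 p_0+p_1$, so $p_1=(x-a_1)p_0=x-a_1$; row $1$ reads $x p_1=b_1 p_0+a p_1+p_2$, so $p_2=(x-a)p_1-b_1 p_0$; and for $n\ge 2$, row $n$ reads $x p_n=b\,p_{n-1}+a\,p_n+p_{n+1}$, giving $p_{n+1}=(x-a)p_n-b\,p_{n-1}$. This is precisely the stated three-term recurrence, with coefficients $\alpha_0=a_1$, $\alpha_n=a$ for $n\ge 1$, and $\beta_n$ running through $0,b_1,b,b,\ldots$. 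Provided $b_1\ne 0$ and $b\ne 0$, every $\beta_n\ne 0$, and Theorem \ref{ThreeT} (Favard) certifies that $(p_n)$ is a family of monic orthogonal polynomials whose coefficient array is, by construction, $L^{-1}$.

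The only genuinely delicate point I anticipate is the operator identity $x\mathbf{p}(x)=P\mathbf{p}(x)$: one must manipulate the infinite matrices $\bar{I},L,M$ formally and confirm that the relevant products are entrywise finite (which the triangularity above guarantees). Everything else is bookkeeping: the unit-diagonal property of $L$ supplies monicity and $p_0=1$, and Favard's theorem converts the recurrence into orthogonality, modulo the nonvanishing of the $\beta_n$, which is exactly where one implicitly needs $b,b_1\ne 0$.
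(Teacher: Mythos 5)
Your proposal is correct and follows essentially the same route as the paper: both establish the operator identity $P\,\mathbf{p}(x)=x\,\mathbf{p}(x)$ via $P=L^{-1}\bar{I}L$ applied to the monomial vector, read off the three-term recurrence from the tridiagonal rows, and invoke Favard's theorem (Theorem \ref{ThreeT}). Your explicit remark that $b_1\ne 0$ and $b\ne 0$ are needed for Favard's theorem is a point the paper leaves implicit, but it does not change the argument.
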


\begin{proof} By Favard's theorem, it suffices to show that $L^{-1}$ defines a family of polynomials $\{p_n(x)\}$ that obey the above three-term recurrence. Now
$L$ is lower-triangular and so $L^{-1}$ is the coefficient array of a family of polynomials $p_n(x)$ (with the degree of $p_n(x)$ being $n$), where
$$ L^{-1} \left(\begin{array}{c} 1\\x\\x^2\\x^3\\\vdots\end{array}\right)=\left(\begin{array}{c} p_0(x)\\p_1(x)\\p_2(x)\\p_3(x)\\\vdots\end{array}\right).$$
\noindent We have $$S_L \cdot L^{-1}=L^{-1}\cdot \bar{L} \cdot L^{-1}=L^{-1}\cdot \bar{I}\cdot L \cdot L^{-1}=L^{-1}\cdot \bar{I}.$$
Thus $$S_L\cdot L^{-1}\cdot (1,x,x^2,\ldots)^T=L^{-1}\cdot \bar{I} \cdot (1,x,x^2,\ldots)^T=L^{-1}\cdot(x,x^2,x^3,\ldots)^T.$$
We therefore obtain
\begin{displaymath} \left(\begin{array}{ccccccc} a_1 & 1 & 0 & 0 & 0 & 0 &
\ldots \\b_1 & a & 1 & 0 & 0 & 0 & \ldots \\ 0 & b & a & 1 & 0
& 0 & \ldots \\
0 & 0 & b & a & 1 & 0 & \ldots \\ 0 & 0 & 0 & b & a & 1 &
\ldots \\0 & 0  & 0 & 0 & b & a &\ldots\\ \vdots & \vdots &
\vdots & \vdots &
\vdots & \vdots & \ddots\end{array}\right)\left(\begin{array}{c} p_0(x)\\p_1(x)\\p_2(x)\\p_3(x)\\\vdots\end{array}\right)=\left(\begin{array}{c} xp_0(x)\\xp_1(x)\\xp_2(x)\\xp_3(x)\\\vdots\end{array}\right),\end{displaymath} from which we infer that
$$p_1(x)=x-a_1,$$ and
$$p_{n+1}(x)+a p_n(x)+b_n p_{n-1}(x)=xp_n(x), \quad n\ge 1,$$ or
$$p_{n+1}(x)=(x-a) p_n(x)-b_n p_{n-1}(x), \quad n \ge 1.$$
\end{proof}
\noindent If we now start with a family of orthogonal polynomials $\{p_n(x)\}$, $p_0(x)=1$, $p_1(x)=x-a_1$, that for $n \ge 1$ obey a three-term recurrence
$$p_{n+1}(x)=(x-a) p_n(x)-b_n p_{n-1}(x), $$
 where $b_n$ is the sequence
$0,b_1,b,b,b,\ldots$, then we can define \cite{P_W} an associated Riordan array $L=(g(x),f(x))$ by

$$f(x)=\text{Rev}\frac{x}{1+ax+bx^2}\qquad\text{and}\qquad
g(x)=\frac{1}{1-a_1x-b_1 x f}.$$

\noindent Clearly, $L^{-1}$ is then the coefficient array of the family of polynomials $\{p_n(x)\}$. Combining these results, we have
\begin{theorem} A Riordan array $L=(g(x), f(x))$ is the inverse of the coefficient array of a family of orthogonal polynomials if and only if its production matrix $P=S_L$ is tri-diagonal.
\end{theorem}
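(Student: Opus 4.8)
The plan is to prove the two implications separately, reducing each to results already in hand: chiefly the universal Riordan identity $S_L = L^{-1}\bar I L$ (equivalently $S_L\cdot L^{-1}=L^{-1}\cdot\bar I$) and Corollary \ref{Cor}. The biconditional is really a packaging of Corollary \ref{Cor} together with the construction described immediately after it, so the work is to show that the two hypotheses line up exactly.

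For the ``if'' direction (tri-diagonal $S_L$ $\Rightarrow$ orthogonal polynomials), I would first invoke Proposition \ref{RProdMat}: since $L$ is the Riordan array induced by $S_L$, its production matrix necessarily has the banded form displayed there, with the single superdiagonal $\alpha_0$, the $A$-sequence entries $\alpha_j$ repeating down the lower diagonals, and the $Z$-sequence entries $\xi_j$ filling column $0$. Imposing tri-diagonality then forces $\alpha_j=0$ for $j\ge 3$ and $\xi_j=0$ for $j\ge 2$, so that $S_L$ takes exactly the shape of equation \eqref{P_mat}. Corollary \ref{Cor} then delivers the monic family $\{p_n(x)\}$ with $p_0=1$, $p_1=x-a_1$ and $p_{n+1}=(x-a)p_n-b_n p_{n-1}$, which is orthogonal by Favard's theorem (Theorem \ref{ThreeT}), and $L^{-1}$ is by construction its coefficient array.

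For the ``only if'' direction, suppose $L^{-1}$ is the coefficient array of a family of orthogonal polynomials $\{p_n(x)\}$. Because $f_1=1$ forces the diagonal of $L^{-1}$ to consist entirely of $1$'s, the $p_n$ are monic, so by Favard's theorem they obey a three-term recurrence, which I rewrite as $x p_n = p_{n+1}+\alpha_n p_n+\beta_n p_{n-1}$. The key step is the universal identity $S_L\cdot L^{-1}=L^{-1}\cdot\bar I$, valid for every Riordan array; applying both sides to $(1,x,x^2,\ldots)^T$ and using $L^{-1}(1,x,x^2,\ldots)^T=(p_0,p_1,p_2,\ldots)^T$ yields $S_L\,(p_0,p_1,p_2,\ldots)^T=(xp_0,xp_1,xp_2,\ldots)^T$. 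Reading off row $n$, the entries of $S_L$ in that row are precisely the coefficients expressing $x p_n$ in the basis $\{p_k\}$; by the three-term recurrence only $p_{n-1},p_n,p_{n+1}$ appear, and since $\deg p_k=k$ this expansion is unique. Hence row $n$ of $S_L$ is supported on columns $n-1,n,n+1$, which is exactly tri-diagonality.

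I do not expect a serious obstacle: once the universal identity and Proposition \ref{RProdMat} are available, both directions are short. The only points requiring care are (i) confirming that $f_1=1$ makes the $p_n$ monic, so that the monic form of Favard's theorem applies, and (ii) handling the boundary row $n=0$, where there is no $p_{-1}$ term, so that the top-left corner reads $p_1=x-a_1$ correctly and tri-diagonality is not misstated at the corner.
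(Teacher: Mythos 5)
Your proof is correct, and its ``if'' half is essentially the paper's: you pass from tridiagonality to the specific shape of equation~(\ref{P_mat}) and then apply Corollary~\ref{Cor}. In fact you are slightly more careful than the paper here, since Corollary~\ref{Cor} assumes the form~(\ref{P_mat}) (constant $a$ and $b$ down the diagonals) rather than bare tridiagonality, and your appeal to Proposition~\ref{RProdMat} --- forcing $\alpha_j=0$ for $j\ge 3$, $\xi_j=0$ for $j\ge 2$, with $\alpha_0=1$ from $f_1=1$ --- supplies exactly the step the paper delegates to the proposition quoted from \cite{Triple}. Where you genuinely diverge is the ``only if'' half. The paper argues through the coefficient recurrences (\ref{OP_1}) and (\ref{OP_2}) together with the uniqueness of the $Z$- and $A$-sequences, matched against Proposition~\ref{AZ}, so that the sequence characterization of the Riordan array forces the tridiagonal pattern. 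You instead run the computation inside the proof of Corollary~\ref{Cor} backwards: the identity $S_L\cdot L^{-1}=L^{-1}\cdot \bar{I}$, applied to $(1,x,x^2,\ldots)^T$, exhibits row $n$ of $S_L$ as the list of coefficients of $xp_n(x)$ in the basis $\{p_k(x)\}$ (a legitimate reading because rows of a Riordan production matrix are finitely supported, again by Proposition~\ref{RProdMat}), and then Favard's three-term recurrence plus uniqueness of expansion in a basis with $\deg p_k=k$ annihilates every entry outside columns $n-1$, $n$, $n+1$. Your route buys generality and transparency: it never invokes the eventual constancy of the recurrence coefficients that is peculiar to ordinary Riordan arrays, so the same argument proves verbatim the exponential analogue stated later in the paper, and it makes plain that $S_L$ is precisely the Jacobi matrix of the functional. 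The paper's route, in exchange, surfaces that structural constancy explicitly as part of the converse. One gloss you share with the paper rather than introduce yourself: neither argument verifies $\beta_n\ne 0$ (equivalently $b_1,b\ne 0$ in~(\ref{P_mat})), which Theorem~\ref{ThreeT} requires before ``obeys a three-term recurrence'' can be upgraded to ``orthogonal''.
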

\begin{proof} If $L$ has a tri-diagonal production matrix, then by Corollary (\ref{Cor}), $L^{-1}$ is the coefficient array of orthogonal polynomials. It conversely $L^{-1}$ is the coefficient array of a family of orthogonal polynomials, then using the fact they these polynomials obey a three-term recurrence and the uniqueness of the $Z$- and $A$-sequences, we see using equations (\ref{OP_1}) and (\ref{OP_2}) along with  Proposition \textbf{\ref{AZ}}, that the production matrix is tri-diagonal.
\end{proof}
\begin{proposition} \label{Moments} Let $L=(g(x), f(x))$ be a Riordan array with tri-diagonal production matrix $S_L$. Then
$$[x^n]g(x)=\mathcal{L}(x^n),$$ where $\mathcal{L}$ is the linear functional that defines the associated family of orthogonal polynomials.
\end{proposition}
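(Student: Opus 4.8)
The plan is to combine Theorem \ref{CF} with the explicit generating function for $g$ supplied by the general result quoted just before Corollary \ref{Cor}. Since $S_L$ is tri-diagonal, Corollary \ref{Cor} tells us that $L^{-1}$ is the coefficient array of a family of monic orthogonal polynomials $p_n(x)$ obeying $p_{n+1}(x)=(x-a)p_n(x)-b_n p_{n-1}(x)$ with $p_0=1$, $p_1=x-a_1$ and $b_n$ the sequence $0,b_1,b,b,\ldots$. Let $\mathcal{L}$ be the functional for which these polynomials are orthogonal; then the three-term recurrence coefficients are $\alpha_0=a_1$, $\alpha_n=a$ for $n\ge 1$, $\beta_1=b_1$, and $\beta_n=b$ for $n\ge 2$. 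First I would observe that, since $L=(g,f)$, the first column of $L$ is generated by $g(x)f(x)^0=g(x)$, so the assertion is precisely that $g(x)$ is the moment generating function $\sum_{n\ge 0}\mathcal{L}(x^n)x^n$.

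By Theorem \ref{CF}, this moment generating function equals the Jacobi-type continued fraction determined by the $\alpha_n$ and $\beta_n$ above. The key step is to evaluate that continued fraction in closed form and match it against $g$. I would isolate its constant-coefficient tail $T(x)$, namely the continued fraction with all $\alpha_n=a$ and $\beta_n=b$, which satisfies the fixed-point equation $T=1/(1-ax-bx^2T)$. Using $f=\text{Rev}\frac{x}{1+ax+bx^2}$, the defining relation $\frac{f}{1+af+bf^2}=x$ rearranges to $f(1-ax-bxf)=x$, that is, $\frac{f}{x}=\frac{1}{1-ax-bxf}=\frac{1}{1-ax-bx^2(f/x)}$. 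Thus $f/x$ solves the same fixed-point equation as $T$, and since that equation has a unique formal power series solution with constant term $1$, I conclude that $T(x)=f(x)/x$.

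Feeding this back into the full continued fraction gives
$$\sum_{n\ge 0}\mathcal{L}(x^n)x^n=\frac{1}{1-a_1x-b_1x^2T(x)}=\frac{1}{1-a_1x-b_1x^2\,\frac{f(x)}{x}}=\frac{1}{1-a_1x-b_1xf(x)}.$$
The right-hand side is exactly the formula $g(x)=\frac{1}{1-a_1x-b_1xf}$ from the general result quoted just before Corollary \ref{Cor}. Hence $g(x)=\sum_{n\ge 0}\mathcal{L}(x^n)x^n$, and comparing coefficients of $x^n$ yields $[x^n]g(x)=\mathcal{L}(x^n)$, as required.

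I expect the main obstacle to be the identification of the tail of the continued fraction with $f/x$. Everything hinges on recognizing that the reversion relation defining $f$ is algebraically identical to the self-similar fixed-point equation for the constant-coefficient tail $T$, and on invoking uniqueness of the power series solution to pass from \emph{satisfies the same equation} to \emph{is equal}. Once that identification is secured, the remaining manipulations are purely formal and the equality $g=\sum_n \mu_n x^n$ follows at once.
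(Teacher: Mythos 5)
Your proof is correct, but it takes a genuinely different route from the paper's. The paper's own proof is a two-line moment computation: since $L^{-1}$ is the coefficient array of the $p_n$, the rows of $L$ express the monomials in the orthogonal basis, $x^n=\sum_{i=0}^n l_{n,i}\,p_i(x)$; applying $\mathcal{L}$ and using $\mathcal{L}(p_i(x))=\delta_{i,0}$ (orthogonality of $p_i$ against $p_0=1$, together with the standing normalization $\mu_0=1$) gives $\mathcal{L}(x^n)=l_{n,0}=[x^n]g(x)$ at once. You instead route everything through the continued-fraction machinery: Theorem \ref{CF} for the moment generating function, the structure result $g=1/(1-a_1x-b_1xf)$ with $f=\text{Rev}\frac{x}{1+ax+bx^2}$ from \cite{Triple}, and the identification of the periodic tail $T$ with $f/x$ via the fixed-point equation $u=1/(1-ax-bx^2u)$ and uniqueness of its formal power series solution. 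That identification is exactly the Lemma the paper proves immediately \emph{after} this proposition, where it is used to re-derive the continued-fraction expansion of $g$ ``directly''; so your argument runs the paper's logic in reverse, using Theorem \ref{CF} plus the Lemma to prove the proposition, where the paper proves the proposition elementarily and then obtains the continued-fraction expansion of $g$ as a consequence. Your route has the merit that the expansion of $g$ as a Jacobi continued fraction drops out as a byproduct rather than as a separate step. The paper's route is more elementary (it needs only linearity of $\mathcal{L}$ and orthogonality, no continued fractions) and, importantly, it transfers verbatim to the exponential Riordan case later in the paper, where the recurrence coefficients $\alpha_n,\beta_n$ are genuinely non-constant, the production-matrix diagonals are not eventually constant, and your self-similar-tail argument would have no analogue.
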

\begin{proof} Let $L=(l_{i,j})_{i,j \ge 0}$.
We have \cite{Viennot} $$x^n=\sum_{i=0}^n l_{n,i}p_i(x).$$ Applying $\mathcal{L}$, we get
$$\mathcal{L}(x^n)=\mathcal{L}\left(\sum_{i=0}^n l_{n,i}p_i(x)\right)=\sum_{i=0}^n l_{n,i} \mathcal{L}(p_i(x))=\sum_{i=0}^n l_{n,i}\delta_{i,0}=l_{n,0}=[x^n]g(x).$$
\end{proof}
Thus under the conditions of the proposition, by Theorem \ref{CF}, $g(x)$ is the g.f. of the moment sequence $\mu_n=\mathcal{L}(x^n)$. Hence $g(x)$ has the continued fraction expansion
$$g(x)=\cfrac{1}{1-a_1 x-
\cfrac{b_1 x^2}{1-a x -
\cfrac{b x^2}{1-a x -
\cfrac{b x^2}{1-a x -\cdots}}}}.$$

\noindent This can also be established directly. To see this, we use the \begin{lemma} Let $$f(x)=\text{Rev}\frac{x}{1+ax+bx^2}.$$ Then $$\frac{f}{x}=\frac{1}{1-ax-bx^2(f/x)}.$$
\end{lemma}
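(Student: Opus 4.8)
The plan is to unwind the definition of the reversion and reduce the claimed identity to the single functional equation satisfied by $f$. Write $\phi(x)=\frac{x}{1+ax+bx^2}$, so that $f=\text{Rev}\,\phi$ means precisely $\phi(f(x))=x$. Substituting $f$ into $\phi$ gives
$$\frac{f}{1+af+bf^2}=x,$$
which I rearrange to the closed functional equation
$$f=x(1+af+bf^2),\qquad\text{equivalently}\qquad \frac{f}{x}=1+af+bf^2.$$
This single relation will do all the work.

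Before manipulating, I would record that every step is legitimate in the ring of formal power series. Since $\phi'(0)=1$, the reversion begins $f(x)=x+O(x^2)$; hence $f/x=1+O(x)$ is a unit in $\mathbb{Z}[[x]]$, and likewise the denominator $1-ax-bxf$ appearing on the right has constant term $1$ and is therefore invertible. Thus dividing by $x$ and inverting these series are all valid operations, and no convergence question arises.

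Finally I would verify the target by clearing denominators. Observe first that $1-ax-bx^2(f/x)=1-ax-bxf$, so the asserted identity $\frac{f}{x}=\frac{1}{1-ax-bxf}$ is equivalent, upon multiplying across, to
$$\frac{f}{x}\bigl(1-ax-bxf\bigr)=1,\qquad\text{i.e.}\qquad \frac{f}{x}-af-bf^2=1.$$
But this is exactly the functional equation $\frac{f}{x}=1+af+bf^2$ derived in the first paragraph, so the identity holds and the proof is finished. The only real point — and it is conceptual rather than an obstacle — is to recognize that the displayed quadratic-continued-fraction form of the lemma is merely a disguised rewriting of the reversion equation $\phi(f)=x$; once this is seen, the algebra is immediate and the computation is entirely routine.
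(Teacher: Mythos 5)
Your proof is correct, but it follows a genuinely different route from the paper's. The paper proceeds by explicit computation: it solves the reversion equation $u/(1+au+bu^2)=x$ with the quadratic formula to obtain
$$f(x)=\frac{1-ax-\sqrt{1-2ax+(a^2-4b)x^2}}{2bx},$$
then solves the fixed-point equation $v=\frac{1}{1-ax-bx^2v}$ the same way, and observes that the two radical expressions agree up to the factor $x$. You avoid radicals entirely: after noting that the denominator $1-ax-bx^2(f/x)=1-ax-bxf$ is a unit in the formal power series ring, you clear it and recognize that the resulting equation $\frac{f}{x}-af-bf^2=1$ is literally the reversion equation $f=x(1+af+bf^2)$ rearranged. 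Your argument buys elementarity and a little extra generality: there is no branch-of-square-root choice to justify, every step is a legitimate operation in $\mathbb{Z}[[x]]$, and the case $b=0$ is covered uniformly, whereas the paper's closed forms carry $2bx$ and $2bx^2$ in denominators and so degenerate there. What the paper's computation buys is the explicit closed form of $f(x)$ itself, which is of independent use (it is the generating function appearing in the surrounding discussion), but which is not actually needed to establish the lemma.
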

\begin{proof}
By definition, $f(x)$ is the solution $u(x)$, with $u(0)=0$, of $$\frac{u}{1+au+bu^2}=x.$$ We find
$$u(x)=\frac{1-ax-\sqrt{1-2ax+(a^2-4b)x^2}}{2bx}.$$
Solving the equation
$$v(x)=\frac{1}{1-av-bx^2v},$$  we obtain
$$v(x)=\frac{1-ax-\sqrt{1-2ax+(a^2-4b)x^2}}{2bx^2}=\frac{f(x)}{x}.$$
\end{proof}
\noindent
Thus we have
$$\frac{f(x)}{x}=\frac{1}{x}\text{Rev}\frac{x}{1+ax+bx^2}=
\cfrac{1}{1-ax-
\cfrac{bx^2}{1-ax-
\cfrac{bx^2}{1-\cdots}}}.$$
Now $$g(x)=\frac{1}{1-a_1x-b_1 xf}=\frac{1}{1-a_1 x-b_1 x^2 (f/x)}$$ immediately implies by the above lemma that
$$g(x)=\cfrac{1}{1-a_1x-
\cfrac{b_1x^2}{1-ax-
\cfrac{bx^2}{1-ax-
\cfrac{bx^2}{1-\ldots}}}}.$$

\noindent We note that the elements of the rows of $L^{-1}$ can be identified with the coefficients of the
characteristic polynomials  of the successive principal sub-matrices of $P$.
\begin{example} We consider the Riordan array
$$\left(\frac{1}{1+ax+bx^2}, \frac{x}{1+ax+bx^2}\right).$$
Then
the production matrix
(Stieltjes matrix) of the inverse Riordan array
$\left(\frac{1}{1+ax+bx^2}, \frac{x}{1+ax+bx^2}\right)^{-1}$
left-multiplied by the $k$-th
binomial array
$$\left(\frac{1}{1-kx},\frac{x}{1-kx}\right)=\left(\frac{1}{1-x},\frac{x}{1-x}\right)^k$$
is given by
\begin{displaymath}P=\left(\begin{array}{ccccccc} a+k & 1 & 0
&
0 & 0 & 0 & \ldots \\b & a+k & 1 & 0 & 0 & 0 & \ldots \\ 0 & b
& a+k & 1 & 0
& 0 & \ldots \\ 0 & 0 & b & a+k & 1 & 0 & \ldots \\ 0 & 0 & 0
&
b & a+k & 1 & \ldots \\0 & 0  & 0 & 0 & b & a+k &\ldots\\
\vdots & \vdots &
\vdots & \vdots & \vdots & \vdots &
\ddots\end{array}\right)\end{displaymath} and vice-versa. This
follows since
$$\left(\frac{1}{1+ax+bx^2},\frac{x}{1+ax+bx^2}\right)\cdot\left(\frac{1}{1+kx},\frac{x}{1+kx}\right)=\left(\frac{1}{1+(a+k)x+bx^2},\frac{x}{1+(a+k)x+bx^2}\right).$$
In fact we have the more general result\,:
\begin{displaymath}\begin{split} \left(\frac{1+\lambda x+\mu
x^2}{1+ax+bx^2},\frac{x}{1+ax+bx^2}\right)\cdot\left(\frac{1}{1+kx},\frac{x}{1+kx}\right)=\\
\left(\frac{1+\lambda x+ \mu
x^2}{1+(a+k)x+bx^2},\frac{x}{1+(a+k)x+bx^2}\right).\end{split}\end{displaymath}
The inverse of this last matrix therefore has production
array \begin{displaymath} \left(\begin{array}{ccccccc}
a+k-\lambda & 1 & 0 & 0 & 0 & 0 & \ldots \\b-\mu & a+k & 1 & 0
& 0 & 0 & \ldots \\ 0
& b & a+k & 1 & 0 & 0 & \ldots \\ 0 & 0 & b & a+k & 1 & 0 &
\ldots \\ 0 & 0 & 0 & b & a+k & 1 & \ldots \\0 & 0  & 0 & 0 &
b
& a+k &\ldots\\
\vdots & \vdots & \vdots & \vdots & \vdots & \vdots &
\ddots\end{array}\right).\end{displaymath} \end{example}

\section{Exponential Riordan arrays}
 The \emph{exponential Riordan group} \cite
{PasTri, DeutschShap, ProdMat}, is a set of
infinite lower-triangular integer matrices, where each matrix
is defined by a pair
of generating functions $g(x)=g_0+g_1x+g_2x^2+\cdots$ and
$f(x)=f_1x+f_2x^2+\cdots$ where $g_0 \ne 0$ and $f_1\ne 0$. In what follows, we shall assume
$$g_0=f_1=1.$$
The associated
matrix is the matrix
whose $i$-th column has exponential generating function
$g(x)f(x)^i/i!$ (the first column being indexed by 0). The
matrix corresponding to
the pair $f, g$ is denoted by $[g, f]$.  The group law is given by \begin{displaymath}
[g,
f]\cdot [h,
l]=[g(h\circ f), l\circ f].\end{displaymath} The identity for
this law is $I=[1,x]$ and the inverse of $[g, f]$ is $[g,
f]^{-1}=[1/(g\circ
\bar{f}), \bar{f}]$ where $\bar{f}$ is the compositional
inverse of $f$. We use the notation $\mathit{e}\mathcal{R}$ to
denote this group.

If $\mathbf{M}$ is the matrix $[g,f]$, and
$\mathbf{u}=(u_n)_{n \ge 0}$
is an integer sequence with exponential generating function
$\mathcal{U}$
$(x)$, then the sequence $\mathbf{M}\mathbf{u}$ has
exponential
generating function $g(x)\mathcal{U}(f(x))$. Thus the row sums
of the array
$[g,f]$ have exponential generating function given by $g(x)e^{f(x)}$ since the sequence
$1,1,1,\ldots$ has exponential generating function $e^x$.

As an element of the group of exponential Riordan arrays, the Binomial matrix $\mathbf{B}$ is given by
 $\mathbf{B}=[e^x,x]$. By the above, the exponential
generating function of
its row sums is given by $e^x e^x = e^{2x}$, as expected
($e^{2x}$ is the e.g.f. of $2^n$).

\begin{example} We consider the exponential Riordan array
$[\frac{1}{1-x},x]$, \seqnum{A094587}. This array has elements
\begin{displaymath}\left(\begin{array}{ccccccc} 1 & 0 & 0 & 0
&
0 & 0 & \ldots \\1 & 1 & 0 & 0 & 0 & 0 & \ldots \\ 2 & 2 & 1 &
0 & 0 & 0 &
\ldots \\ 6 & 6 & 3 & 1 & 0 & 0 & \ldots \\ 24 & 24 & 12 & 4 &
1 & 0 & \ldots \\120 & 120  & 60 & 20 & 5 & 1 &\ldots\\ \vdots
& \vdots &
\vdots & \vdots & \vdots & \vdots &
\ddots\end{array}\right)\end{displaymath} and general term $[k
\le n] \frac{n!}{k!}$, and inverse
\begin{displaymath}\left(\begin{array}{ccccccc} 1 & 0 & 0 & 0
&
0 & 0 & \ldots \\-1 & 1 & 0 & 0 & 0 & 0 & \ldots \\ 0 & -2 & 1
& 0 & 0 & 0 &
\ldots \\ 0 & 0 & -3 & 1 & 0 & 0 & \ldots \\ 0 & 0 & 0& -4 & 1
& 0 & \ldots \\0 & 0  & 0 & 0 & -5 & 1 &\ldots\\ \vdots &
\vdots & \vdots &
\vdots & \vdots & \vdots &
\ddots\end{array}\right)\end{displaymath} which is the array
$[1-x,x]$. In particular, we note that the row sums
of the inverse, which begin $1,0,-1,-2,-3,\ldots$ (that is,
$1-n$), have e.g.f. $(1-x)\exp(x)$. This sequence is thus the
binomial transform
of the sequence with e.g.f. $(1-x)$ (which is the sequence
starting $1,-1,0,0,0,\ldots$). \end{example}

\begin{example}
We
consider the
exponential Riordan array $L=[1, \frac{x}{1-x}]$. The general
term of this matrix may be calculated as follows
\begin{eqnarray*}T_{n,k}&=&\frac{n!}{k!}[x^n]\frac{x^k}{(1-x)^k}\\
&=&\frac{n!}{k!}[x^{n-k}](1-x)^{-k}\\
&=&\frac{n!}{k!}[x^{n-k}]\sum_{j=0}^{\infty}\binom{-k}{j}(-1)^jx^j\\
&=&\frac{n!}{k!}[x^{n-k}]\sum_{j=0}^{\infty}\binom{k+j-1}{j}x^j\\
&=&\frac{n!}{k!}\binom{k+n-k-1}{n-k}\\
&=&\frac{n!}{k!}\binom{n-1}{n-k}.\end{eqnarray*} Thus its row
sums, which have e.g.f. $\exp
\left(\frac{x}{1-x}\right)$, have general term $\sum_{k=0}^n
\frac{n!}{k!}\binom{n-1}{n-k}$. This is \seqnum{A000262}, the
`number of
``sets of lists": the number of partitions of $\{1,..,n\}$
into
any number of lists, where a list means an ordered subset'.
Its
general term
is equal to $(n-1)!L_{n-1}(1,-1)$.
\end{example}
We will use the following 
\cite{DeutschShap, ProdMat},important result concerning
matrices that are production matrices for exponential Riordan
arrays. \begin{proposition} Let $A=\left(a_{n,k}\right)_{n,k
\ge
0}=[g(x),f(x)]$ be an exponential Riordan array and let
\begin{equation}\label{seq_def} c(y)=c_0 + c_1 y +c_2 y^2 +
\ldots, \qquad r(y)=r_0
+ r_1 y + r_2 y^2 + \ldots\end{equation} be two formal power
series that that \begin{eqnarray}\label{r_def} r(f(x))&=&f'(x)
\\
 \label{c_def} c(f(x))&=&\frac{g'(x)}{g(x)}. \end{eqnarray} Then
\begin{eqnarray} (i)\qquad a_{n+1,0}&=&\sum_{i} i! c_i a_{n,i}
\\ (ii)\qquad a_{n+1,k}&=& r_0 a_{n,k-1}+\frac{1}{k!}
\sum_{i\ge
k}i!(c_{i-k}+k r_{i-k+1})a_{n,i} \end{eqnarray}  or, assuming $c_k=0$ for $k<0$ and $r_k=0$ for $k<0$,
 \begin{equation}\label{array_def}
a_{n+1,k}=\frac{1}{k!}\sum_{i\ge
k-1} i!(c_{i-k}+k r_{i-k+1})a_{n,i}.\end{equation} Conversely,
starting from the sequences defined by (\ref{seq_def}), the
infinite array
$\left(a_{n,k}\right)_{n,k\ge 0}$ defined by (\ref{array_def})
is
an exponential Riordan array. \end{proposition}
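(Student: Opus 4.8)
The plan is to pass to the exponential generating functions of the columns of $A$. Write $\phi_k(x)=g(x)f(x)^k/k!$ for the e.g.f.\ of the $k$-th column, so that $\phi_k(x)=\sum_{n\ge 0}a_{n,k}\,x^n/n!$. Two facts drive the argument. First, differentiating an e.g.f.\ shifts the row index up by one: $[x^n/n!]\,\phi_k'(x)=a_{n+1,k}$, while $[x^n/n!]\,\phi_j(x)=a_{n,j}$. Second, there is a clean differential identity linking $\phi_k$ to its neighbour $\phi_{k-1}$, which I derive next. The forward direction then amounts to extracting coefficients from this identity, and the converse follows by a uniqueness argument.

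The heart of the proof is the identity (with the convention $\phi_{-1}:=0$)
\[
\phi_k'(x)=\frac{g'f^k+kgf^{k-1}f'}{k!}=\frac{g'}{g}\,\frac{gf^k}{k!}+f'\,\frac{gf^{k-1}}{(k-1)!}=c(f)\,\phi_k(x)+r(f)\,\phi_{k-1}(x),
\]
where I have used the defining relations $r(f)=f'$ and $c(f)=g'/g$, together with the cancellation $k/k!=1/(k-1)!$ that produces the clean coefficient on the second term.

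Next I would expand $c(f)=\sum_i c_i f^i$ and $r(f)=\sum_i r_i f^i$ and re-express each term in the column basis via $f^i\phi_k=\tfrac{(k+i)!}{k!}\phi_{k+i}$ and $f^i\phi_{k-1}=\tfrac{(k-1+i)!}{(k-1)!}\phi_{k-1+i}$. Applying $[x^n/n!]$ to both sides turns the differential identity into a linear recurrence for $a_{n+1,k}$ in terms of the entries $a_{n,m}$ of the previous row; re-indexing the first sum by $m=k+i$ and the second by $m=k-1+i$, and rewriting $m!/(k-1)!=k\cdot m!/k!$, gives exactly statement (ii) once the $m=k-1$ term of the second sum is isolated as $r_0\,a_{n,k-1}$. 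Setting $k=0$, where the $\phi_{-1}$ term vanishes, gives (i), and absorbing the boundary term back into one sum over $m\ge k-1$ under the convention $c_{-1}=r_{-1}=0$ gives the combined form (\ref{array_def}). I expect the only real obstacle here to be the factorial bookkeeping in the re-indexing; all of the mathematical content sits in the single differential identity above.

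For the converse I would argue by uniqueness. Starting from $c$ and $r$ (with $r_0=1$), solve the formal equation $f'=r(f)$, $f(0)=0$ for the unique series $f(x)=x+\cdots$, and put $g(x)=\exp\!\big(\int_0^x c(f(t))\,dt\big)$, so that $g(0)=1$ and $g'/g=c(f)$. Then $[g,f]$ is a genuine exponential Riordan array, and by the forward direction its entries satisfy (\ref{array_def}). Since (\ref{array_def}) expresses each row in terms of its predecessor, the array it defines is completely determined by its initial row $a_{0,0}=1$, $a_{0,k}=0$ $(k\ge 1)$, which is precisely the initial row of $[g,f]$. Hence the array defined by (\ref{array_def}) coincides with $[g,f]$ and is an exponential Riordan array, as claimed.
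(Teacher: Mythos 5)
Your proof is correct, but note that the paper itself offers no proof of this proposition: it is quoted as a known result, with references to Deutsch--Shapiro and Deutsch--Ferrari--Rinaldi, so there is no internal argument to compare against. What you have written is essentially the standard proof from those sources. The key differential identity $\phi_k'=c(f)\phi_k+r(f)\phi_{k-1}$ for the column e.g.f.s $\phi_k=gf^k/k!$ is exactly right, and your bookkeeping checks out: expanding $c(f)=\sum_i c_i f^i$, $r(f)=\sum_i r_i f^i$, using $f^i\phi_k=\frac{(k+i)!}{k!}\phi_{k+i}$, re-indexing, and writing $\frac{m!}{(k-1)!}=k\,\frac{m!}{k!}$ yields (ii), with the $m=k-1$ term supplying $r_0a_{n,k-1}$; setting $k=0$ (where $\phi_{-1}=0$) gives (i), and the convention $c_{-1}=r_{-1}=0$ gives the combined form (\ref{array_def}). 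The converse via existence and uniqueness of the formal solutions of $f'=r(f)$, $f(0)=0$ and $g'/g=c(f)$, $g(0)=1$, followed by row-by-row agreement with $[g,f]$, is also sound.

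Two small points worth making explicit. First, the converse needs $r_0\neq0$ (you take $r_0=1$, matching the paper's normalization $f_1=1$); otherwise $f$ solves the ODE with $f_1=0$ and $[g,f]$ is not a proper exponential Riordan array. Second, the infinite sums in (ii) and (\ref{array_def}) are only formally infinite: one should observe that the initial row $(1,0,0,\ldots)$ together with the recurrence forces $a_{n,i}=0$ for $i>n$ by induction, so each sum is finite and the row-by-row determination you invoke in the uniqueness step is well defined. Neither point is a gap in substance, but both deserve a sentence.
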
 \noindent A
consequence of
this proposition is that the production matrix  $P=\left(p_{i,j}\right)_{i,j\ge 0}$ for an exponential Riordan array obtained as in
the
proposition satisfies \cite{ProdMat}
 $$p_{i,j}=\frac{i!}{j!}(c_{i-j}+jr_{i-j+1})  \qquad
(c_{-1}=0).$$
Furthermore, the bivariate exponential function
$$\phi_P(t,z)=\sum_{n,k} p_{n,k}t^k \frac{z^n}{n!}$$ of the
matrix $P$ is given by
$$\phi_P(t,z) = e^{tz}(c(z)+t r(z)).$$
\noindent Note in particular that we have \begin{equation}\label{r} r(x)=f'(\bar{f}(x)),\end{equation} and \begin{equation} \label{c} c(x)=\frac{g'(\bar{f}(x))}{g(\bar{f}(x))}.\end{equation}

\begin{example}
The production matrix of $\left[1,\frac{x}{1+x}\right]$ \seqnum{A111596} is given by
\begin{displaymath}\left(\begin{array}{ccccccc} 0 & 1 & 0 & 0
&
0 & 0 & \ldots \\0 & -2
& 1 & 0 & 0 & 0 & \ldots \\ 0 & 2 & -4 & 1 & 0 & 0 & \ldots \\
0
& 0 & 6 & -6 & 1 & 0 & \ldots \\0 & 0 & 0 & 12 & -8 & 1 &
\ldots \\0 & 0
& 0 & 0 & 20 & -10 &\ldots\\ \vdots & \vdots & \vdots &
\vdots
& \vdots & \vdots & \ddots\end{array}\right).\end{displaymath}
\noindent  The
row sums of $L^{-1}$
 have e.g.f. $\exp\left(\frac{x}{1+x}\right)$, and
start $1, 1, -1, 1, 1, -19, 151, \ldots$. This is
\seqnum{A111884}.
This follows since we have $g(x)=1$ and so $g'(x)=0$, implying that $c(x)=0$, and
$f(x)=\frac{x}{1+x}$ which gives us $\bar{f}(x)=\frac{x}{1-x}$ and
$f'(x)=\frac{1}{(1+x)^2}$. Thus $f'(\bar{f}(x))=c(x)=(1-x)^2$. Hence the bivariate generating function of
$P$ is $e^{xy}(1-x)^2y$, as required.
\end{example}
\begin{example} The
exponential Riordan array
$\mathbf{A}= \left[\frac{1}{1-x},\frac{x}{1-x}\right]$, or
\begin{displaymath}\left(\begin{array}{ccccccc} 1 & 0 & 0 & 0
&
0 & 0 & \ldots
\\1 & 1 & 0 & 0 & 0 & 0 & \ldots \\ 2 & 4 & 1 & 0 & 0 & 0 &
\ldots \\ 6 & 18 & 9 & 1 & 0 & 0 & \ldots \\ 24 & 96 & 72 & 16
& 1 & 0 & \ldots
\\120 & 600  & 600 & 200 & 25 & 1 &\ldots\\ \vdots & \vdots &
\vdots & \vdots & \vdots & \vdots &
\ddots\end{array}\right)\end{displaymath}
has general term $$ T_{n,k}=\frac{n!}{k!}\binom{n}{k}.$$ It is
closely related to the Laguerre polynomials. Its inverse $\mathbf{A}^{-1}$ is
the exponential Riordan array $\left[\frac{1}{1+x},\frac{x}{1+x}\right]$ with general term
$(-1)^{n-k}\frac{n!}{k!}\binom{n}{k}$. This is
\seqnum{A021009}, the triangle
of coefficients of the Laguerre polynomials $L_n(x)$.

The production matrix of the matrix $\mathbf{A}^{-1}=\left[\frac{1}{1+x},\frac{x}{1+x}\right]$  is given by
\begin{displaymath}\left(\begin{array}{ccccccc} 1 & 1 & 0 & 0
&
0 & 0 & \ldots \\1 & 3
& 1 & 0 & 0 & 0 & \ldots \\ 0 & 4 & 5 & 1 & 0 & 0 & \ldots \\
0
& 0 & 9 & 7 & 1 & 0 & \ldots \\0 & 0 & 0 & 16 & 9 & 1 &
\ldots \\0 & 0
& 0 & 0 & 25 & 11 &\ldots\\ \vdots & \vdots & \vdots &
\vdots
& \vdots & \vdots & \ddots\end{array}\right).\end{displaymath}
This follows since we have $g(x)=\frac{1}{1-x}$, and so $g'(x)=\frac{1}{(1-x)^2}$, and
$f(x)=\frac{x}{1-x}$ which yields $\bar{f}(x)=\frac{x}{1+x}$ and $f'(x)=\frac{1}{(1-x)^2}$.
Then $$c(x)=\frac{g'(\bar{f}(x))}{g(\bar{f}(x))}=1+x$$ while
$$r(x)=f'(\bar{f}(x))=(1+x)^2.$$ Thus the bivariate generating function of $P$ is given by
$$ e^{xy}(1+x+(1+x)^2y).$$
\noindent We note
that $$ \mathbf{A}=\exp(\mathbf{S}),$$
where
\begin{displaymath}\mathbf{S}=\left(\begin{array}{ccccccc} 0 &
0 & 0 & 0 & 0 & 0 & \ldots \\1 & 0 & 0 & 0 & 0 & 0 & \ldots \\
0 & 4 & 0 & 0
& 0 & 0 & \ldots \\ 0 & 0 & 9 & 0 & 0 & 0 & \ldots \\ 0 & 0 &
0
& 16 & 0 & 0 & \ldots \\0 & 0  & 0 & 0 & 25 & 0 &\ldots\\
\vdots & \vdots &
\vdots & \vdots & \vdots & \vdots &
\ddots\end{array}\right).\end{displaymath} \end{example}
\begin{example} The exponential Riordan array
$\left[e^x,\ln\left(\frac{1}{1-x}\right)\right]$, or
\begin{displaymath}\left(\begin{array}{ccccccc} 1 & 0 & 0 & 0
&
0 & 0 & \ldots \\1 & 1
& 0 & 0 & 0 & 0 & \ldots \\ 1 & 3 & 1 & 0 & 0 & 0 & \ldots \\
1
& 8 & 6 & 1 & 0 & 0 & \ldots \\ 1 & 24 & 29 & 10 & 1 & 0 &
\ldots \\1 & 89
& 145 & 75 & 15 & 1 &\ldots\\ \vdots & \vdots & \vdots &
\vdots
& \vdots & \vdots & \ddots\end{array}\right)\end{displaymath}
is the
coefficient array for the polynomials $$_2F_0(-n,x;-1)$$ which
are an unsigned version of the Charlier polynomials (of order
$0$)
\cite{wgautschi, Roman, Szego}. This is \seqnum{A094816}. It is equal to
$$[e^x,x]\cdot \left[
1, \ln\left(\frac{1}{1-x}\right)\right],$$ or the product of
the binomial
array $\mathbf{B}$ and the array of (unsigned) Stirling
numbers
of the first kind.
The production matrix of the inverse of this matrix is given by
\begin{displaymath}\left(\begin{array}{ccccccc} -1 & 1 & 0 & 0
&
0 & 0 & \ldots \\1 & -2
& 1 & 0 & 0 & 0 & \ldots \\ 0 & 2 & -3 & 1 & 0 & 0 & \ldots \\
0
& 0 & 3 & -4 & 1 & 0 & \ldots \\0 & 0 & 0 & 4 & -5 & 1 &
\ldots \\0 & 0
& 0 & 0 & 5 & -6 &\ldots\\ \vdots & \vdots & \vdots &
\vdots
& \vdots & \vdots & \ddots\end{array}\right)\end{displaymath}
which indicates the orthogonal nature of these polynomials. We can prove this as follows. We have
$$\left[e^x, \ln\left(\frac{1}{1-x}\right)\right]^{-1}=\left[e^{-(1-e^{-x})},1-e^{-x}\right].$$
Hence $g(x)=e^{-(1-e^{-x})}$ and $f(x)=1-e^{-x}$. We are thus led to the equations
\begin{eqnarray*} r(1-e^{-x})&=&\,e^{-x},\\
c(1-e^{-x})&=&-e^{-x},\end{eqnarray*} with solutions $r(x)=1-x$, $c(x)=x-1$. Thus the bi-variate generating function for the
production matrix of the inverse array is
$$e^{tz}(z-1+t(1-z)),$$ which is what is required.
\end{example}

We can infer the following result from the article \cite{P_W} by Peart and Woan.
\begin{proposition} If $L=[g(x),f(x)]$ is an exponential Riordan array and $P=S_L$ is tridiagonal, then necessarily
\begin{displaymath} P=S_L=\left(\begin{array}{ccccccc} \alpha_0 & 1
& 0 & 0 & 0 & 0 & \ldots \\\beta_1 & \alpha_1 & 1 & 0 & 0 & 0 & \ldots \\
0 & \beta_2 & \alpha_2 & 1
& 0 & 0 & \ldots \\ 0 & 0 & \beta_3 & \alpha_3 & 1 & 0 & \ldots \\ 0 & 0 &
0
& \beta_4 & \alpha_4 & 1 & \ldots \\0 & 0  & 0 & 0 & \beta_5 & \alpha_5 &\ldots\\ \vdots
& \vdots &
\vdots & \vdots & \vdots & \vdots &
\ddots\end{array}\right)\end{displaymath} where $\{\alpha_i\}_{i \ge 0}$ is an arithmetic sequence with common difference $\alpha$,
$\left\{\frac{\beta_i}{i}\right\}_{i\ge 1}$ is an arithmetic sequence with common difference $\beta$, and
$$\ln(g)=\int (\alpha_0+\beta_1 f)\,dx, \quad g(0)=1,$$ where $f$ is given by
$$f'=1 +\alpha f + \beta f^2, \quad f(0)=0,$$
and vice-versa.
\end{proposition}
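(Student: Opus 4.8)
The plan is to read everything off the production-matrix formula for exponential Riordan arrays established above, namely that for $L=[g,f]$ the Stieltjes matrix $P=S_L=(p_{i,j})$ has entries
$$p_{i,j}=\frac{i!}{j!}\bigl(c_{i-j}+j\,r_{i-j+1}\bigr),\qquad (c_{-1}=0),$$
where $c(y)=\sum_k c_k y^k$ and $r(y)=\sum_k r_k y^k$ are the series determined by $c(f)=g'/g$ and $r(f)=f'$. The whole proposition is then an exercise in translating the tridiagonality of $P$ into conditions on $c$ and $r$, and back again through the functional equations $r(f)=f'$ and $c(f)=(\ln g)'$.

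For the forward direction, I would first record the three occupied diagonals. On the super-diagonal, $p_{i,i+1}=\tfrac{i!}{(i+1)!}\bigl(c_{-1}+(i+1)r_0\bigr)=r_0$, so the normalisation that these entries equal $1$ forces $r_0=1$. On the main diagonal, $p_{i,i}=c_0+i\,r_1$, which is affine in $i$; hence $\alpha_i=p_{i,i}$ is an arithmetic sequence with first term $\alpha_0=c_0$ and common difference $\alpha:=r_1$. On the sub-diagonal, $p_{i,i-1}=i\bigl(c_1+(i-1)r_2\bigr)$, so $\beta_i=p_{i,i-1}$ satisfies $\beta_i/i=c_1+(i-1)r_2$; thus $\{\beta_i/i\}_{i\ge 1}$ is arithmetic with first term $\beta_1=c_1$ and common difference $\beta:=r_2$. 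The crucial step is the vanishing of everything below the sub-diagonal: for each fixed $k\ge 2$ we need $p_{i,i-k}=\tfrac{i!}{(i-k)!}\bigl(c_k+(i-k)r_{k+1}\bigr)=0$ for all $i\ge k$. Since $c_k+(i-k)r_{k+1}$ is affine in $i$ and vanishes at infinitely many values of $i$, both of its coefficients vanish, giving $c_k=0$ and $r_{k+1}=0$ for every $k\ge 2$. (Entries strictly above the super-diagonal are automatically zero, since there $c_{i-j}$ and $r_{i-j+1}$ carry negative index.)

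This pins down $c(y)=\alpha_0+\beta_1 y$ and $r(y)=1+\alpha y+\beta y^2$. Feeding these into the defining functional equations yields the stated formulas at once: $r(f)=f'$ becomes $f'=1+\alpha f+\beta f^2$ with $f(0)=0$ (and indeed $f'(0)=r_0=1$, as an exponential Riordan array requires), while $c(f)=(\ln g)'$ becomes $(\ln g)'=\alpha_0+\beta_1 f$, that is $\ln g=\int(\alpha_0+\beta_1 f)\,dx$ with $g(0)=1$. For the converse I would run the argument backwards: given the two arithmetic sequences, I read off $\alpha_0$ and $\beta_1$ as their first terms and $\alpha,\beta$ as their common differences, set $c(y)=\alpha_0+\beta_1 y$ and $r(y)=1+\alpha y+\beta y^2$, and define $f$ and $g$ by the displayed ODE and integral. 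The converse half of the production-matrix proposition for exponential Riordan arrays then guarantees that $[g,f]$ is an exponential Riordan array whose production matrix is exactly the prescribed tridiagonal $P$. The main point needing care is this last appeal: one must check that the $f$ and $g$ reconstructed this way really are the unique solutions of $r(f)=f'$ and $c(f)=(\ln g)'$, which is immediate because those are precisely the equations used to define them, together with the admissibility normalisations $f(0)=0$, $f'(0)=1$, and $g(0)=1$.
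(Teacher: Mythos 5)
Your proof is correct. It is worth noting, however, that the paper itself gives no proof of this proposition at all: it simply states that the result ``can be inferred from the article by Peart and Woan'' and moves on, so there is no internal argument to compare yours against. What you have done is supply a self-contained derivation using machinery the paper does quote, namely the Deutsch--Ferrari--Rinaldi formula $p_{i,j}=\frac{i!}{j!}\left(c_{i-j}+j\,r_{i-j+1}\right)$ for the production matrix of an exponential Riordan array, together with the defining relations $r(f)=f'$ and $c(f)=g'/g$. Your diagonal-by-diagonal reading is exactly right: the superdiagonal gives $r_0=1$, the main diagonal gives $\alpha_i=c_0+ir_1$ (arithmetic with difference $\alpha=r_1$), the subdiagonal gives $\beta_i=i\left(c_1+(i-1)r_2\right)$ (so $\beta_i/i$ is arithmetic with difference $\beta=r_2$), and the vanishing of each lower diagonal, being affine in $i$ and zero for infinitely many $i$, kills $c_k$ and $r_{k+1}$ for $k\ge 2$; this pins down $c(y)=\alpha_0+\beta_1y$ and $r(y)=1+\alpha y+\beta y^2$, from which the displayed ODE for $f$ and the integral formula for $\ln g$ follow immediately, and the converse is the same computation run backwards. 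The trade-off is clear: the paper's citation defers the work to external literature (Peart--Woan phrase the result in terms of Stieltjes matrices of lower-triangular matrices with $e$-th row scalings, which requires translation), whereas your argument keeps everything inside the production-matrix formalism already set up in the paper, at the modest cost of invoking formal power-series existence and uniqueness for the ODE $f'=r(f)$, $f(0)=0$ and for $(\ln g)'=c(f)$, $g(0)=1$ in the converse direction --- a point you correctly flag and which is indeed immediate.
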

In the above, we note that $\alpha_0=a_1$ where $g(x)$ is the g.f. of $a_0=1,a_1,a_2,\ldots$.
We have the important
\begin{corollary} If $L=[g(x),f(x)]$ is an exponential Riordan array and $P=S_L$ is tridiagonal, with
\begin{displaymath} P=S_L=\left(\begin{array}{ccccccc} \alpha_0 & 1
& 0 & 0 & 0 & 0 & \ldots \\\beta_1 & \alpha_1 & 1 & 0 & 0 & 0 & \ldots \\
0 & \beta_2 & \alpha_2 & 1
& 0 & 0 & \ldots \\ 0 & 0 & \beta_3 & \alpha_3 & 1 & 0 & \ldots \\ 0 & 0 &
0
& \beta_4 & \alpha_4 & 1 & \ldots \\0 & 0  & 0 & 0 & \beta_5 & \alpha_5 &\ldots\\ \vdots
& \vdots &
\vdots & \vdots & \vdots & \vdots &
\ddots\end{array}\right),\end{displaymath}
then $L^{-1}$ is the coefficient array of the family of monic orthogonal polynomials $p_n(x)$ where
$p_0(x)=1$, $p_1(x)=x-a_1=x-\alpha_0$, and
$$p_{n+1}(x)=(x-\alpha_n)p_n(x)-\beta_n p_{n-1}(x), \quad n \ge 0.$$
\end{corollary}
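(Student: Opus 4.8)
The plan is to mirror the proof of Corollary \ref{Cor} for ordinary Riordan arrays, since the production-matrix identity on which that argument rests is a purely matrix-theoretic fact, insensitive to whether $L$ is an ordinary or an exponential Riordan array. By Favard's theorem (Theorem \ref{ThreeT}) it suffices to exhibit the three-term recurrence, so the whole task reduces to extracting that recurrence from the tridiagonal shape of $P=S_L$. First I would observe that, $L$ being lower triangular with unit diagonal, $L^{-1}$ is again lower triangular with unit diagonal, so its rows are the coefficients of a family of monic polynomials $p_n(x)$ of degree $n$, determined by
$$L^{-1}(1,x,x^2,\ldots)^T=(p_0(x),p_1(x),p_2(x),\ldots)^T.$$

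Next I would invoke the defining relation of the production matrix. By construction $S_L=L^{-1}\bar{I}L$ (this is the relation $P=A_P^{-1}\bar{I}A_P$ with $A_P=L$, which holds verbatim in the exponential setting), whence $S_L L^{-1}=L^{-1}\bar{I}$. Applying both sides to the vector $(1,x,x^2,\ldots)^T$ and using $\bar{I}(1,x,x^2,\ldots)^T=(x,x^2,x^3,\ldots)^T$ together with the previous display yields
$$S_L(p_0(x),p_1(x),p_2(x),\ldots)^T=(xp_0(x),xp_1(x),xp_2(x),\ldots)^T.$$

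Finally I would read this identity off row by row using the tridiagonal form of $P=S_L$. Row $0$ gives $\alpha_0 p_0+p_1=xp_0$, that is $p_1(x)=x-\alpha_0$; and for $n\ge 1$, row $n$ gives $\beta_n p_{n-1}+\alpha_n p_n+p_{n+1}=xp_n$, i.e. the asserted recurrence $p_{n+1}(x)=(x-\alpha_n)p_n(x)-\beta_n p_{n-1}(x)$. The feature that makes this recurrence \emph{monic} is that the superdiagonal entries of $S_L$ are all $1$; under the normalization $f_1=1$ this is forced, since $r_0=f'(\bar{f}(0))=f_1=1$, so no rescaling of $p_{n+1}$ intervenes. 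With the recurrence established, Theorem \ref{ThreeT} delivers the orthogonality, given $\beta_n\ne 0$ for $n\ge 1$.

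The only point needing care -- and the place I expect the main (though mild) obstacle -- is justifying that $S_L=L^{-1}\bar{I}L$, and hence $S_L L^{-1}=L^{-1}\bar{I}$, may be used for an exponential array exactly as in the ordinary case. This is immediate from the general production-matrix formalism, which treats $A_P$ as an arbitrary invertible lower-triangular matrix; the exponential-versus-ordinary distinction affects only how the entries of $L$ and of $S_L$ are \emph{generated} (through the factorial-laden expressions $p_{i,j}=\tfrac{i!}{j!}(c_{i-j}+jr_{i-j+1})$), not the underlying matrix identity. Consequently the entire argument transfers without change.
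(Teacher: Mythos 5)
Your proof is correct and follows essentially the same route as the paper's: reduce to the three-term recurrence via Favard's theorem, use the identity $S_L L^{-1}=L^{-1}\bar{I}$, apply both sides to $(1,x,x^2,\ldots)^T$, and read off the rows of the tridiagonal matrix. Your added remarks (that the production-matrix identity is purely matrix-theoretic and so transfers verbatim to the exponential setting, and that the unit superdiagonal forces monicity) are points the paper leaves implicit, but they do not change the argument.
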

\begin{proof} By Favard's theorem, it suffices to show that $L^{-1}$ defines a family of polynomials $\{p_n(x)\}$ that obey the above three-term recurrence. Now
$L$ is lower-triangular and so $L^{-1}$ is the coefficient array of a family of polynomials $p_n(x)$, where
$$ L^{-1} \left(\begin{array}{c} 1\\x\\x^2\\x^3\\\vdots\end{array}\right)=\left(\begin{array}{c} p_0(x)\\p_1(x)\\p_2(x)\\p_3(x)\\\vdots\end{array}\right).$$
\noindent We have $$S_L \cdot L^{-1}=L^{-1}\cdot \bar{L} \cdot L^{-1}=L^{-1}\cdot \bar{I}\cdot L \cdot L^{-1}=L^{-1}\cdot \bar{I}.$$
Thus $$S_L\cdot L^{-1}\cdot (1,x,x^2,\ldots)^T=L^{-1}\cdot \bar{I} \cdot (1,x,x^2,\ldots)^T=L^{-1}\cdot(x,x^2,x^3,\ldots)^T.$$
We therefore obtain
\begin{displaymath} \left(\begin{array}{ccccccc} \alpha_0 & 1
& 0 & 0 & 0 & 0 & \ldots \\\beta_1 & \alpha_1 & 1 & 0 & 0 & 0 & \ldots \\
0 & \beta_2 & \alpha_2 & 1
& 0 & 0 & \ldots \\ 0 & 0 & \beta_3 & \alpha_3 & 1 & 0 & \ldots \\ 0 & 0 &
0
& \beta_4 & \alpha_4 & 1 & \ldots \\0 & 0  & 0 & 0 & \beta_5 & \alpha_5 &\ldots\\ \vdots
& \vdots &
\vdots & \vdots & \vdots & \vdots &
\ddots\end{array}\right)\left(\begin{array}{c} p_0(x)\\p_1(x)\\p_2(x)\\p_3(x)\\\vdots\end{array}\right)=\left(\begin{array}{c} xp_0(x)\\xp_1(x)\\xp_2(x)\\xp_3(x)\\\vdots\end{array}\right),\end{displaymath} from which we infer
$$p_1(x)=x-\alpha_0,$$ and
$$p_{n+1}(x)+\alpha_n p_n(x)+\beta_n p_{n-1}(x)=xp_n(x), \quad n\ge 1,$$ or
$$p_{n+1}(x)=(x-\alpha_n) p_n(x)-\beta_n p_{n-1}(x) \quad n\ge 1.$$
\end{proof}
\noindent If we now start with a family of orthogonal polynomials $\{p_n(x)\}$ that obeys a three-term recurrence
$$p_{n+1}(x)=(x-\alpha_n) p_n(x)-\beta_n p_{n-1}(x), \quad n\ge 1,$$ with $p_0(x)=1$, $p_1(x)=x-\alpha_0$,
where $\{\alpha_i\}_{i \ge 0}$ is an arithmetic sequence with common difference $\alpha$ and
$\left\{\frac{\beta_i}{i}\right\}_{i\ge 1}$ is an arithmetic sequence with common difference $\beta$, then we can define \cite{P_W} an associated exponential Riordan array $L=[g(x),f(x)]$ by
$$f'=1 +\alpha f + \beta f^2, \quad f(0)=0,$$ and
$$\ln(g)=\int (\alpha_0+\beta_1 f)\,dx, \quad g(0)=1.$$ Clearly, $L^{-1}$ is then the coefficient array of the family of polynomials $\{p_n(x)\}$. Gathering these results, we have
\begin{theorem} An exponential Riordan array $L=[g(x), f(x)]$ is the inverse of the coefficient array of a family of orthogonal polynomials if and only if its production matrix $P=S_L$ is tri-diagonal.
\end{theorem}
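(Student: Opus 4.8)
The plan is to prove the two implications separately, mirroring the structure used for the ordinary Riordan case but invoking the exponential analogues established immediately above. For the direction asserting that a tri-diagonal production matrix forces orthogonality, there is essentially nothing new to do: if $P=S_L$ is tri-diagonal, then the preceding corollary shows directly that $L^{-1}$ is the coefficient array of the monic family $p_n(x)$ satisfying $p_{n+1}(x)=(x-\alpha_n)p_n(x)-\beta_n p_{n-1}(x)$, and Theorem~\ref{ThreeT} (Favard's theorem) certifies that such a family is orthogonal. So this half reduces to citing the corollary.

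For the converse, I would suppose that $L^{-1}$ is the coefficient array of a family of orthogonal polynomials $\{p_n(x)\}$ and show that $S_L$ must be tri-diagonal. The key identity, obtained exactly as in the corollary's proof from $\bar{L}=\bar{I}L$, is
$$S_L\cdot L^{-1}=L^{-1}\cdot\bar{I}.$$
Applying both sides to the column $(1,x,x^2,\ldots)^T$ and using $L^{-1}(1,x,x^2,\ldots)^T=(p_0(x),p_1(x),p_2(x),\ldots)^T$ gives
$$S_L\cdot(p_0(x),p_1(x),p_2(x),\ldots)^T=(xp_0(x),xp_1(x),xp_2(x),\ldots)^T.$$
In other words, the $n$-th row of $S_L$ records the coefficients in the expansion of $xp_n(x)$ in the basis $\{p_j(x)\}_{j\ge 0}$.

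The decisive step is then to read this expansion off from the three-term recurrence. Since $\{p_n(x)\}$ is orthogonal, Favard's theorem supplies sequences $(\alpha_n)$ and $(\beta_n)$ with $xp_n(x)=p_{n+1}(x)+\alpha_n p_n(x)+\beta_n p_{n-1}(x)$ for $n\ge 1$, and $xp_0(x)=p_1(x)+\alpha_0 p_0(x)$. Because the $p_j(x)$ have strictly increasing degrees they form a basis of the polynomial ring, so this expansion is unique. Matching it against the row identity $\sum_j (S_L)_{n,j}\,p_j(x)=xp_n(x)$ forces $(S_L)_{n,n+1}=1$, $(S_L)_{n,n}=\alpha_n$, $(S_L)_{n,n-1}=\beta_n$, and $(S_L)_{n,j}=0$ for every other $j$. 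Hence $S_L$ is tri-diagonal, which closes the equivalence.

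I expect the only real subtlety to be the justification that the expansion of $xp_n(x)$ is unique, i.e. that the $\{p_j(x)\}$ genuinely form a basis of the polynomial ring (which follows from the degree hypothesis on an orthogonal family), together with the clean row-by-row bookkeeping that pins down the matrix entries. Everything else is a transcription of the ordinary-case argument, with the preceding exponential corollary and the exponential production-matrix proposition standing in for their ordinary counterparts; in particular the whole argument avoids ever computing the series $c(y)$ and $r(y)$ explicitly.
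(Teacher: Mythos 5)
Your proof is correct, but your converse direction takes a genuinely different route from the paper's. The paper offers no self-contained proof of this theorem: it ``gathers'' the preceding corollary (tri-diagonal $\Rightarrow$ orthogonal, which you also cite) together with the Peart--Woan construction, which \emph{starts} from a three-term recurrence whose coefficients $\alpha_i$ and $\beta_i/i$ form arithmetic sequences and builds $L=[g,f]$ from the differential equations $f'=1+\alpha f+\beta f^2$, $f(0)=0$, and $\ln g=\int(\alpha_0+\beta_1 f)\,dx$, $g(0)=1$; tri-diagonality of $S_L$ then comes from the Peart--Woan proposition. (In the ordinary case the paper instead argues via uniqueness of the $A$- and $Z$-sequences and equations (\ref{OP_1})--(\ref{OP_2}).) Your converse works directly at the matrix level: from $S_L\cdot L^{-1}=L^{-1}\cdot\bar{I}$ you read off that row $n$ of $S_L$ lists the coordinates of $xp_n(x)$ in the basis $\{p_j(x)\}$, and uniqueness of that expansion, combined with the three-term recurrence supplied by Favard's theorem, kills every entry outside the three central diagonals. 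This is sound, and it buys two things: the argument is uniform (the same words prove the ordinary-case theorem verbatim), and it closes a small logical gap in the paper's ``gathering'' --- the theorem's hypothesis is only that $L^{-1}$ is the coefficient array of \emph{some} orthogonal family, not that its recurrence coefficients already satisfy the arithmetic-sequence conditions; in your argument the arithmetic structure is a consequence (via Peart--Woan) rather than an assumption. What the paper's route buys in exchange is constructive content, namely the explicit recovery of $(g,f)$ from the recurrence data. Two points you should make explicit to be fully rigorous: the $p_n$ are monic, because $L^{-1}$ is itself an exponential Riordan array with $g_0=f_1=1$ and hence has unit diagonal (the paper's Theorem \ref{ThreeT} is stated for monic families); and the rows of $S_L=L^{-1}\bar{I}L$ are finitely supported (the product of lower-triangular matrices with the shift is lower Hessenberg), so each row identity really is a finite linear combination of the $p_j$ and uniqueness of basis expansions legitimately applies.
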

\begin{proposition} Let $L=[g(x), f(x)]$ be an exponential Riordan array with tri-diagonal production matrix $S_L$. Then
$$n![x^n]g(x)=\mathcal{L}(x^n)=\mu_n,$$ where $\mathcal{L}$ is the linear functional that defines the associated family of orthogonal polynomials.
\end{proposition}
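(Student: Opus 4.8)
The plan is to mimic the proof of Proposition \ref{Moments}, adapting only the normalization appropriate to the exponential setting. By the theorem immediately preceding this proposition, the tri-diagonality of $S_L$ guarantees that $L^{-1}$ is the coefficient array of the family of monic orthogonal polynomials $\{p_n(x)\}$ associated to $\mathcal{L}$. Writing $L=(l_{n,k})_{n,k\ge 0}$, this says
$$L^{-1}(1,x,x^2,\ldots)^T=(p_0(x),p_1(x),p_2(x),\ldots)^T,$$
so that left-multiplying by $L$ yields the expansion
$$x^n=\sum_{i=0}^n l_{n,i}\,p_i(x).$$

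First I would apply the linear functional $\mathcal{L}$ to this identity and distribute it over the finite sum. The key input is the orthogonality of the $p_i$ together with the standing normalization $p_0(x)=1$ and $\mu_0=\mathcal{L}(1)=1$: since $\mathcal{L}(p_i p_0)=0$ for $i\neq 0$ and $\mathcal{L}(p_0)=\mathcal{L}(1)=1$, we obtain $\mathcal{L}(p_i(x))=\delta_{i,0}$. Hence
$$\mathcal{L}(x^n)=\sum_{i=0}^n l_{n,i}\,\mathcal{L}(p_i(x))=\sum_{i=0}^n l_{n,i}\,\delta_{i,0}=l_{n,0}.$$

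The only point at which the exponential case genuinely departs from the ordinary one of Proposition \ref{Moments} is in identifying the entry $l_{n,0}$. For an exponential Riordan array $[g,f]$ the zeroth column has \emph{exponential} generating function $g(x)f(x)^0/0!=g(x)$, so by definition $g(x)=\sum_{n\ge 0} l_{n,0}\,x^n/n!$, and therefore $l_{n,0}=n![x^n]g(x)$ rather than $[x^n]g(x)$. Substituting this into the displayed equality gives $\mu_n=\mathcal{L}(x^n)=n![x^n]g(x)$, as claimed. I do not anticipate any serious obstacle: the argument is formal once the inverse relationship furnished by the preceding theorem is invoked, and the only subtlety — carefully carrying the $n!$ factor forced by the exponential convention for the columns — is precisely what distinguishes this statement from its ordinary-Riordan analogue.
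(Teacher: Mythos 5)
Your proposal is correct and follows essentially the same route as the paper: expand $x^n=\sum_{i=0}^n l_{n,i}p_i(x)$ via the inverse relationship, apply $\mathcal{L}$ using $\mathcal{L}(p_i(x))=\delta_{i,0}$, and identify $l_{n,0}=n![x^n]g(x)$ from the exponential-column convention. The only difference is that you spell out steps the paper leaves implicit (the derivation of the monomial expansion from $L^{-1}$ being the coefficient array, and the normalization $\mu_0=1$ giving $\mathcal{L}(p_i)=\delta_{i,0}$), which is fine.
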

\begin{proof} Let $L=(l_{i,j})_{i,j \ge 0}$.
We have $$x^n=\sum_{i=0}^n l_{n,i}p_i(x).$$ Applying $\mathcal{L}$, we get
$$\mathcal{L}(x^n)=\mathcal{L}\left(\sum_{i=0}^n l_{n,i}p_i(x)\right)=\sum_{i=0}^n l_{n,i} \mathcal{L}(p_i(x))=\sum_{i=0}^n l_{n,i}\delta_{i,0}=l_{n,0}=n![x^n]g(x).$$
\end{proof}
\begin{corollary} Let $L=[g(x), f(x)]$ be an exponential Riordan array with tri-diagonal production matrix $S_L$. Then the moments $\mu_n$ of the associated family of orthogonal polynomials are given by the terms of the first column of $L$.
\end{corollary}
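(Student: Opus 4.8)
The plan is to read the conclusion off directly from the immediately preceding Proposition, using only the defining property of an exponential Riordan array. The key observation is that the \emph{first} column of $L=[g(x),f(x)]$ is the column indexed by $0$, whose exponential generating function is $g(x)f(x)^0/0!=g(x)$, carrying no factor of $f$.

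First I would fix notation $L=(l_{i,j})_{i,j\ge 0}$ and translate the statement ``column $0$ has e.g.f. $g(x)$'' into the identity
$$\sum_{n\ge 0} l_{n,0}\frac{x^n}{n!}=g(x),$$
from which extracting coefficients gives $l_{n,0}=n![x^n]g(x)$ for every $n\ge 0$. This is where one must be attentive to the normalization convention adopted in this section, namely that columns are indexed from $0$ and that column $i$ has e.g.f. $g(x)f(x)^i/i!$; it is precisely this convention that forces column $0$ to be generated by $g(x)$ alone.

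Next, since by hypothesis $P=S_L$ is tri-diagonal, the preceding Proposition applies and yields $\mu_n=\mathcal{L}(x^n)=n![x^n]g(x)$, where $\mathcal{L}$ is the linear functional defining the associated orthogonal family. Combining the two displayed identities gives $\mu_n=l_{n,0}$, so the moment $\mu_n$ equals the $n$-th entry of the first column of $L$, which is exactly the assertion.

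I do not expect any genuine obstacle: this corollary is a restatement of the Proposition once one notes that the quantity $n![x^n]g(x)$ appearing there is literally the entry $l_{n,0}$ of $L$. The entire content lies in the generating-function bookkeeping of the first step, and the only care needed is to keep the exponential normalization straight.
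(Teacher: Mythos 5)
Your proposal is correct and matches the paper's (implicit) reasoning exactly: the paper states this corollary without a separate proof, treating it as an immediate consequence of the preceding proposition once one observes that column $0$ of $[g(x),f(x)]$ has exponential generating function $g(x)$, so that $l_{n,0}=n![x^n]g(x)=\mu_n$. Your generating-function bookkeeping is precisely that observation, so there is nothing to add.
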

Thus under the conditions of the proposition, $g(x)$ is the g.f. of the moment sequence $\mathcal{L}(x^n)$. Hence $g(x)$ has the continued fraction expansion
$$g(x)=\cfrac{1}{1-\alpha_0 x-
\cfrac{\beta_1 x^2}{1-\alpha_1 x -
\cfrac{\beta_2 x^2}{1-\alpha_2 x -
\cfrac{\beta_3 x^2}{1-\alpha_3 x -\cdots}}}}.$$

When we come to study Hermite polynomials, we shall be working with elements of the exponential Appell subgroup. By the
\emph{exponential Appell subgroup} $\mathcal{A}\mathit{e}\mathcal{R}$ of
$\mathit{e}\mathcal{R}$ we understand the set of arrays of the
form $[f(x),x]$.

Let $\mathbf{A} \in \mathcal{A}\mathit{e}\mathcal{R}$ correspond to the
sequence $(a_n)_{n \ge 0}$, with e.g.f. $f(x)$. Let $\mathbf{B} \in
\mathcal{A}\mathit{e}\mathcal{R}$ correspond to the sequence $(b_n)$,
with e.g.f. $g(x)$. Then we have \begin{enumerate} \item The
row sums of
$\mathbf{A}$ are the binomial transform of $(a_n)$. \item The
inverse of $\mathbf{A}$ is the sequence array for the sequence
with e.g.f.
$\frac{1}{f(x)}$. \item The product $\mathbf{A}\mathbf{B}$ is
the sequence array for the exponential convolution
$a*b(n)=\sum_{k=0}^n
\binom{n}{k} a_k b_{n-k}$ with e.g.f. $f(x) g(x)$.
\end{enumerate}
\noindent For instance, the row sums of $\mathbf{A}=[f(x),x]$ will have e.g.f. given by
$$ [f(x),x]\cdot e^x= f(x)e^x =e^x f(x)=[e^x,x]\cdot f(x),$$ which is the e.g.f. of the binomial transform of $(a_n)$.
\begin{example} We consider the matrix $[\cosh(x), x]$, \seqnum{A119467}, with
elements \begin{displaymath}\left(\begin{array}{ccccccc} 1 & 0
& 0 & 0 & 0 & 0 &
\ldots \\0 & 1 & 0 & 0 & 0 & 0 & \ldots \\ 1 & 0 & 1 & 0 & 0 &
0 & \ldots \\ 0 & 3 & 0 & 1 & 0 & 0 & \ldots \\ 1 & 0 & 6 & 0
&
1 & 0 &
\ldots \\0 & 5  & 0 & 10 & 0 & 1 &\ldots\\ \vdots & \vdots &
\vdots & \vdots & \vdots & \vdots &
\ddots\end{array}\right).\end{displaymath}
The row sums of this matrix have e.g.f. $\cosh(x)\exp(x)$,
which is the e.g.f. of the sequence $1,1,2,4,8,16,\ldots$. The
inverse matrix is
$[\text{sech}(x),x]$, \seqnum{A119879},  with entries
\begin{displaymath}\left(\begin{array}{ccccccc} 1 & 0 & 0 & 0
&
0 & 0 & \ldots \\0 & 1 & 0 & 0 & 0 & 0 &
\ldots \\ -1 & 0 & 1 & 0 & 0 & 0 & \ldots \\ 0 & -3 & 0 & 1 &
0
& 0 & \ldots \\ 5 & 0 & -6 & 0 & 1 & 0 & \ldots \\0 & 25  & 0
&
-10 & 0 & 1
&\ldots\\ \vdots & \vdots & \vdots & \vdots & \vdots & \vdots
&
\ddots\end{array}\right).\end{displaymath} The row sums of this
matrix have
e.g.f. $\text{sech}(x)\exp(x)$. This is \seqnum{A155585}. \end{example}

\section{The Hankel transform of an integer sequence}
The {\it
Hankel
transform} of a given sequence
$A=\{a_0,a_1,a_2,...\}$ is the
sequence of Hankel determinants $\{h_0, h_1, h_2,\dots \}$
where
$h_{n}=|a_{i+j}|_{i,j=0}^{n}$, i.e

\begin{center} \begin{equation}
 \label{gen1}
 A=\{a_n\}_{n\in\mathbb N_0}\quad \rightarrow \quad
 h=\{h_n\}_{n\in\mathbb N_0}:\quad
h_n=\left| \begin{array}{ccccc}
 a_0\ & a_1\  & \cdots & a_n  &  \\
 a_1\ & a_2\  &        & a_{n+1}  \\
\vdots &      & \ddots &          \\
 a_n\ & a_{n+1}\ &    & a_{2n}
\end{array} \right|. \end{equation} \end{center} The Hankel
transform of a sequence $a_n$ and its binomial transform are
equal.

In the case that $a_n$ has g.f. $g(x)$ expressible in the form
$$g(x)=\cfrac{a_0}{1-\alpha_0 x-
\cfrac{\beta_1 x^2}{1-\alpha_1 x-
\cfrac{\beta_2 x^2}{1-\alpha_2 x-
\cfrac{\beta_3 x^2}{1-\alpha_3 x-\cdots}}}}$$ then
we have \cite{Kratt}
\begin{equation}\label{Kratt} h_n = a_0^{n+1} \beta_1^n\beta_2^{n-1}\cdots \beta_{n-1}^2\beta_n=a_0^{n+1}\prod_{k=1}^n
\beta_k^{n+1-k}.\end{equation}
Note that this independent from $\alpha_n$.

We note that $\alpha_n$ and $\beta_n$ are in general not integers.
\noindent Now let $H\left(\begin{array}{ccc} u_1&\ldots&u _k\\
v_1&\ldots& v_k\end{array}\right)$ be the determinant
of
Hankel type with $(i,j)$-th term $\mu_{u_i+v_j}$. Let
$$\Delta_n=H\left(\begin{array}{cccc} 0&1&\ldots&n\\
0&1&\ldots&n\end{array}\right),\qquad
\Delta'_n=H_n\left(\begin{array}{ccccc}
0&1&\ldots&n-1&n\\ 0&1&\ldots&n-1&n+1\end{array}\right).$$
Then
we have
\begin{equation}\alpha_n=\frac{\Delta'_n}{\Delta_n}-\frac{\Delta'_{n-1}}{\Delta_{n-1}},\qquad
\beta_n=\frac{\Delta_{n-2} \Delta_n}{\Delta_{n-1}^2}.\end{equation}

\section{Legendre polynomials}
We recall that the Legendre polynomials $P_n(x)$ can be defined by
$$P_n(x)=\sum_{k=0}^n (-1)^k \binom{n}{k}^2 \left(\frac{1+x}{2}\right)^{n-k}\left(\frac{1-x}{2}\right)^k.$$
Their generating function is given by
$$\frac{1}{\sqrt{1-2xt +t^2}}=\sum_{n=0}^{\infty} P_n(x)t^n.$$
We note that the production matrix of the inverse of the coefficient array of these polynomials is given by
\begin{displaymath}\left(\begin{array}{ccccccc} 0 & 1 &
0
& 0 & 0 & 0 & \ldots \\\frac{1}{3} & 0 & \frac{2}{3} & 0 & 0 & 0 & \ldots \\ 0 & \frac{2}{5}
& 0 & \frac{3}{5} & 0 &
0 & \ldots \\ 0 & 0 & \frac{3}{7} & 0 & \frac{4}{7} & 0 & \ldots \\ 0 & 0 & 0
& \frac{4}{9} & 0 & \frac{5}{9} & \ldots \\0 & 0 & 0 & 0 & \frac{5}{11} & 0
&\ldots\\
\vdots &
\vdots & \vdots & \vdots & \vdots & \vdots &
\ddots\end{array}\right),\end{displaymath}
which corresponds to the fact that the $P_n(x)$ satisfy the following three-term recurrence
$$(n+1)P_{n+1}(x)=(2n+1)xP_n(x)-nP_{n-1}(x).$$

\noindent The shifted Legendre polynomials $\tilde{P}_n (x)$ are defined by
$$\tilde{P}_n(x)=P_n(2x-1).$$
They satisfy
$$\tilde{P}_n(x)=(-1)^n \sum_{k=0}^n \binom{n}{k}\binom{n+k}{k}(-x)^k = \sum_{k=0}^n (-1)^{n-k}\binom{n+k}{2k}\binom{2k}{k}x^k.$$
Their coefficient array begins
\begin{displaymath}\left(\begin{array}{ccccccc} 1 & 0 &
0
& 0 & 0 & 0 & \ldots \\-1 & 2 & 0 & 0 & 0 & 0 & \ldots \\ 1 & -6
& 6 & 0 & 0 &
0 & \ldots \\ -1 & 12 & -30 & 20 & 0 & 0 & \ldots \\ 1 & -20 & 90
& -140 & 70 & 0 & \ldots \\-1 & 30 & -210 & 560 & -630 & 252
&\ldots\\
\vdots &
\vdots & \vdots & \vdots & \vdots & \vdots &
\ddots\end{array}\right),\end{displaymath} and so the first few terms begin
$$1,2x-1,6x^2-6x+1,20x^3-30x^2+12x-1,\ldots$$
We clearly have
$$\frac{1}{\sqrt{1-2(2x-1)t +t^2}}=\sum_{n=0}^{\infty} \tilde{P}_n(x)t^n.$$
\section{Legendre polynomials as moments}
Our goal in this section is to represent the Legendre polynomials as the first column of a Riordan array whose production matrix is
tri-diagonal. We first of all consider the so-called shifted Legendre polynomials. We have
\begin{proposition}
The inverse $\mathbf{L}$ of the Riordan array
$$\left(\frac{1+r(1-r)x^2}{1+(2r-1)x+r(r-1)x^2},\frac{x}{1+(2r-1)x+r(r-1)x^2}\right)$$ has as its first column
the shifted Legendre polynomials $\tilde{P}_n(r)$. The production matrix of $\mathbf{L}$ is tri-diagonal.
\end{proposition}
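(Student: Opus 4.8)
The plan is to exploit the machinery already assembled in the excerpt, namely Corollary \ref{Cor} together with Proposition \ref{Moments}. The key structural fact is that the matrix under consideration is of the special form $\left(\frac{1+\lambda x+\mu x^2}{1+ax+bx^2},\frac{x}{1+ax+bx^2}\right)$ with $a=2r-1$, $b=r(r-1)$, $\lambda=0$, and $\mu=r(1-r)$. First I would compute the Stieltjes (production) matrix of $\mathbf{L}$ using the explicit formula recorded in the excerpt for arrays of this type; the inverse of $\left(\frac{1+\lambda x+\mu x^2}{1+ax+bx^2},\frac{x}{1+ax+bx^2}\right)$ has the tri-diagonal production array with $(0,0)$-entry $a-\lambda=2r-1$, $(1,0)$-entry $b-\mu=r(r-1)-r(1-r)=2r^2-2r=2r(r-1)$, and constant sub/super-diagonal entries $b=r(r-1)$ and $1$, with $a=2r-1$ along the main diagonal from position $(1,1)$ onward. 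This establishes immediately that the production matrix of $\mathbf{L}$ is tri-diagonal, which is one of the two claims.

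By Corollary \ref{Cor}, tri-diagonality of $S_{\mathbf{L}}$ guarantees that $\mathbf{L}^{-1}$ is the coefficient array of a family of monic orthogonal polynomials with recurrence coefficients read off from $S_{\mathbf{L}}$, and by Proposition \ref{Moments} the first column of $\mathbf{L}$ is the moment sequence $\mu_n=\mathcal{L}(x^n)=[x^n]g(x)$, where $g(x)=\frac{1+r(1-r)x^2}{1+(2r-1)x+r(r-1)x^2}$ evaluated at the inverse array. The heart of the proof is therefore to show that this first column equals $\tilde{P}_n(r)$. The plan is to verify the two-term linear recurrence satisfied by the first column of any Riordan array of this shape and to match it against the Legendre three-term recurrence at the point $x=r$. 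Concretely, the shifted Legendre polynomials satisfy $(n+1)\tilde{P}_{n+1}(x)=(2n+1)(2x-1)\tilde{P}_n(x)-n\tilde{P}_{n-1}(x)$, obtained from the standard Legendre recurrence by substituting $x\mapsto 2x-1$; I would confirm that the sequence $\mu_n(r)=[x^n]g(x)$ satisfies exactly this recurrence in $n$ with the variable frozen at the value $r$.

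The cleanest route to matching is to compute the ordinary generating function $\sum_n \mu_n(r)x^n=g(x)$ and compare it with the known generating function $\sum_n \tilde{P}_n(r)x^n=\frac{1}{\sqrt{1-2(2r-1)x+x^2}}$ stated in the Legendre section; if these two power series in $x$ coincide as identities in $r$, we are done. This reduces the problem to a generating-function identity, which I expect to be the main obstacle: the rational function $g(x)$ is visibly \emph{not} equal to the algebraic function $\frac{1}{\sqrt{1-2(2r-1)x+x^2}}$, so a direct comparison of the full g.f.\ cannot succeed, and the correct statement must be that they agree only coefficientwise after accounting for the parameter structure. The resolution I anticipate is that one should not compare $g(x)$ with the Legendre g.f.\ directly but rather verify that the tridiagonal data $(\alpha_n,\beta_n)$ extracted from $S_{\mathbf{L}}$ generate, via Theorem \ref{CF}, a \emph{continued-fraction} expansion whose coefficients match those forcing the moment sequence to be $\tilde{P}_n(r)$; since the recurrence coefficients are constant in $n$ (apart from the first entries), the moments are modified Chebyshev values, and the identification with $\tilde{P}_n(r)$ then follows from the explicit Chebyshev-to-Legendre substitution. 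I would carry out the induction on $n$ using equations (\ref{OP_1}) and (\ref{OP_2}) applied to the first column, using the base cases $\mu_0=1=\tilde{P}_0(r)$ and $\mu_1=2r-1=\tilde{P}_1(r)$, which is the step requiring the most care.
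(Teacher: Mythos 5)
Your handling of the tri-diagonality claim is sound: specializing the product formula from the example at the end of Section 3 with $\lambda=0$, $\mu=r(1-r)$, $a+k=2r-1$, $b=r(r-1)$ gives a production matrix with $(0,0)$-entry $2r-1$, $(1,0)$-entry $r(r-1)-r(1-r)=2r(r-1)$, constant diagonal $2r-1$ and constant subdiagonal $r(r-1)$, which agrees with what the paper obtains by first inverting the array and then applying equations (\ref{AZ_eq}) together with Proposition \ref{RProdMat}. Both routes are legitimate.

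The first-column claim, however, contains a genuine error that derails the rest of your argument. Proposition \ref{Moments} is a statement about the array whose production matrix is tridiagonal --- here that array is $\mathbf{L}$, the \emph{inverse} of the displayed rational array. The moment sequence is therefore read off from the first component of $\mathbf{L}$ itself, i.e.\ from $1/\bigl(g(\bar{f}(x))\bigr)$ where $(g,f)$ denotes the rational array, and \emph{not} from $[x^n]\frac{1+r(1-r)x^2}{1+(2r-1)x+r(r-1)x^2}$ as you assert. The ``obstacle'' you then run into --- that this rational function is visibly not equal to $\frac{1}{\sqrt{1-2(2r-1)x+x^2}}$ --- is purely an artifact of this mix-up, and your attempted escape (``they agree only coefficientwise after accounting for the parameter structure'') is vacuous, since two power series in $x$ agree coefficientwise exactly when they are equal. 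The paper has no such difficulty: it computes the Riordan inverse explicitly,
$$\mathbf{L}=\left(\frac{1}{\sqrt{1-2(2r-1)x+x^2}},\ \frac{1-(2r-1)x-\sqrt{1-2(2r-1)x+x^2}}{2r(r-1)x}\right),$$
and the first component is, on the nose, the generating function $\sum_{n\ge 0}\tilde{P}_n(r)x^n$ of the shifted Legendre polynomials, which proves the first assertion immediately. Your fallback plans do not repair the gap: the continued-fraction route could in principle work (using the Lemma on $f/x$ one checks that $\frac{1}{1-a_1x-b_1xf}=\frac{1}{\sqrt{1-2(2r-1)x+x^2}}$, since here $a_1=2r-1$, $b_1=2r(r-1)$ and $(2r-1)^2-4r(r-1)=1$), but you never carry out this computation; and the proposed induction via equations (\ref{OP_1}) and (\ref{OP_2}) cannot be set up as stated, because those recurrences govern the rows of the coefficient array $\mathbf{L}^{-1}$, not the first column of $\mathbf{L}$, whose column recurrence comes from the $Z$-sequence and has $n$-independent coefficients --- unlike the Legendre recurrence $(n+1)\tilde{P}_{n+1}(r)=(2n+1)(2r-1)\tilde{P}_n(r)-n\tilde{P}_{n-1}(r)$ you intend to match it against.
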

\begin{proof}
Indeed, standard Riordan array techniques show that we have
\begin{eqnarray*}
\mathbf{L}&=&\left(\frac{1+r(1-r)x^2}{1+(2r-1)x+r(r-1)x^2},\frac{x}{1+(2r-1)x+r(r-1)x^2}\right)^{-1}\\
&=&\left(\frac{1}{\sqrt{1-2(2r-1)x+x^2}}, \frac{1-(2r-1)x-\sqrt{1-2(2r-1)x+x^2}}{2r(r-1)x}\right).\end{eqnarray*}
This establishes the first part. Now using equations (\ref{AZ_eq}), with
$$f(x)=\frac{1-(2r-1)x-\sqrt{1-2(2r-1)x+x^2}}{2r(r-1)x}, \quad \bar{f}(x)=\frac{x}{1+(2r-1)x+r(r-1)x^2},$$ and $$g(x)=\frac{1}{\sqrt{1-2(2r-1)x+x^2}},$$ we find that
$$Z(x)=(2r-1)+2rx(r-1),\quad A(x)=1+x(2r-1)+x^2r(r-1).$$
Hence by Proposition \ref{RProdMat} the production matrix $P=S_L$ of $\mathbf{L}$ is given by
\begin{displaymath}\left(\begin{array}{ccccccc} 2r-1 & 1 &
0
& 0 & 0 & 0 & \ldots \\2r(r-1) & 2r-1 & 1 & 0 & 0 & 0 & \ldots \\ 0 & r(r-1)
& 2r-1 & 1 & 0 &
0 & \ldots \\ 0 & 0 & r(r-1) & 2r-1 & 1 & 0 & \ldots \\ 0 & 0 & 0
& r(r-1) & 2r-1 & 1 & \ldots \\0 & 0 & 0 & 0 & r(r-1) & 2r-1
&\ldots\\
\vdots &
\vdots & \vdots & \vdots & \vdots & \vdots &
\ddots\end{array}\right).\end{displaymath}
\end{proof}
\begin{corollary}
The shifted Legendre polynomials are moments of the family of orthogonal polynomials whose coefficient array is given by
$$\mathbf{L}^{-1}=\left(\frac{1+r(1-r)x^2}{1+(2r-1)x+r(r-1)x^2},\frac{x}{1+(2r-1)x+r(r-1)x^2}\right).$$
\end{corollary}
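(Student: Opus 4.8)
The plan is to assemble this corollary directly from the immediately preceding proposition together with the two general results already established for ordinary Riordan arrays, since essentially all of the computational work has been carried out in the proof of that proposition. First I would recall the two facts it supplies: that the production matrix $P=S_{\mathbf{L}}$ of $\mathbf{L}$ is tri-diagonal, and that the first column of $\mathbf{L}$ consists of the shifted Legendre polynomials $\tilde{P}_n(r)$. In generating-function terms this says $[x^n]g(x)=\tilde{P}_n(r)$, where $g(x)=1/\sqrt{1-2(2r-1)x+x^2}$ is the first component of the factorization of $\mathbf{L}$ exhibited in the proposition.

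Next I would invoke the theorem characterizing when a Riordan array is the inverse of the coefficient array of a family of orthogonal polynomials. Since $S_{\mathbf{L}}$ is tri-diagonal, that theorem guarantees that $\mathbf{L}^{-1}$ is indeed the coefficient array of a family of monic orthogonal polynomials; let $\mathcal{L}$ denote the associated linear functional, which is normalized so that $\mu_0=\mathcal{L}(1)=g(0)=1$, consistent with $\tilde{P}_0(r)=1$. If one wished to name the family explicitly, Corollary \ref{Cor} even gives its three-term recurrence from the entries of $P$, namely $\alpha_n=2r-1$, $\beta_1=2r(r-1)$, and $\beta_n=r(r-1)$ for $n\ge 2$, though this is not needed for the statement.

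Finally I would apply Proposition \ref{Moments}, which asserts that for a Riordan array $(g,f)$ with tri-diagonal production matrix the coefficients of $g$ are exactly the moments of the associated functional: $[x^n]g(x)=\mathcal{L}(x^n)=\mu_n$. Combining this identity with the description of the first column of $\mathbf{L}$ from the proposition yields $\mu_n=[x^n]g(x)=\tilde{P}_n(r)$, so that the shifted Legendre polynomials are precisely the moment sequence of the orthogonal family whose coefficient array is $\mathbf{L}^{-1}$, which is the assertion.

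There is no genuine obstacle here: the content of the corollary is the reinterpretation of the preceding proposition through the moment-sequence dictionary, and the only points meriting any care, namely the normalization $\mu_0=1$ and the verification that the $g(x)$ occurring in $\mathbf{L}$ really is the square-root generating function above, are already settled by the explicit inversion performed in the proof of the proposition. As an independent check one could, via Theorem \ref{CF}, write down the continued-fraction expansion of $g(x)$ determined by $\alpha_n=2r-1$, $\beta_1=2r(r-1)$, $\beta_n=r(r-1)$ and confirm that it generates the sequence $\tilde{P}_n(r)$.
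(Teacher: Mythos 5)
Your proposal is correct and follows essentially the same route as the paper, which simply cites the preceding proposition together with Proposition \ref{Moments}; your write-up just makes explicit the intermediate appeal to the tri-diagonality theorem and the normalization $\mu_0=1$, both of which are implicit in the paper's one-line proof.
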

\begin{proof} This follows from the above result and Proposition \ref{Moments}.
\end{proof}

\begin{proposition} The Hankel transform of the sequence $\tilde{P}_n(r)$ is given by $2^n (r(r-1))^{\binom{n+1}{2}}$.
\end{proposition}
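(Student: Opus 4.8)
The plan is to read off the continued-fraction coefficients directly from the tri-diagonal production matrix computed in the preceding proposition, and then to invoke the Hankel-determinant formula (\ref{Kratt}). Since $\tilde{P}_n(r)$ is, by that proposition, the first column of $\mathbf{L}$, and since $\mathbf{L}$ has a tri-diagonal production matrix, Proposition \ref{Moments} tells us that the generating function $g(x)$ of the first column is the moment generating function of the associated orthogonal family; by Theorem \ref{CF} it therefore admits the Jacobi continued-fraction expansion whose parameters are exactly the entries of $P=S_L$.

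First I would identify these parameters. Comparing the displayed production matrix with the standard form in Corollary \ref{Cor}, the main diagonal is constant and equal to $2r-1$, so $\alpha_n=2r-1$ for all $n\ge 0$; the subdiagonal gives $\beta_1=2r(r-1)$ while $\beta_n=r(r-1)$ for all $n\ge 2$. The normalization $\mu_0=\tilde{P}_0(r)=1$ gives $a_0=1$ in the notation of (\ref{Kratt}).

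Then I would substitute into the Hankel formula $h_n=a_0^{n+1}\prod_{k=1}^n \beta_k^{n+1-k}$. Because this expression is independent of the $\alpha_n$, the fact that they are all equal to $2r-1$ plays no role. With $a_0=1$ the product splits off its anomalous first factor as $\beta_1^{\,n}\prod_{k=2}^n \beta_k^{\,n+1-k}$, that is, $(2r(r-1))^n\cdot (r(r-1))^{\sum_{k=2}^n (n+1-k)}$.

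The only genuinely computational step is the bookkeeping of the exponent: isolating $\beta_1$ from the constant tail $\beta_2=\beta_3=\cdots=r(r-1)$. Here $\sum_{k=2}^n(n+1-k)=\binom{n}{2}$, so $h_n=2^n(r(r-1))^{\,n+\binom{n}{2}}$, and the identity $n+\binom{n}{2}=\binom{n+1}{2}$ yields the claimed value $2^n(r(r-1))^{\binom{n+1}{2}}$. I do not anticipate any obstacle beyond this elementary exponent arithmetic, since all the structural work---establishing that $g(x)$ is a moment generating function and that the Hankel transform depends only on the $\beta_k$---is already in place from the cited results.
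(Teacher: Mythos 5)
Your proof is correct and follows essentially the same route as the paper: the paper likewise reads the Jacobi continued-fraction parameters $\alpha_n=2r-1$, $\beta_1=2r(r-1)$, $\beta_n=r(r-1)$ ($n\ge 2$) off the tri-diagonal production matrix, writes down the corresponding expansion of the g.f.\ of $\tilde{P}_n(r)$, and applies Equation (\ref{Kratt}). Your explicit exponent bookkeeping $n+\binom{n}{2}=\binom{n+1}{2}$ is exactly the computation the paper leaves implicit.
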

\begin{proof}
From the above, the g.f. of $\tilde{P}_n(r)$ is given by
$$\cfrac{1}{1-(2r-1)x-
\cfrac{2r(r-1)x^2}{1-(2r-1)x-
\cfrac{r(r-1)x^2}{1-(2r-1)x-
\cfrac{r(r-1)x^2}{1-\cdots}}}}.$$
The result now follows from Equation (\ref{Kratt}).
\end{proof}
\noindent We note that
$$\tilde{P}_n(r)=\frac{1}{\pi}\int_{-2\sqrt{r(r-1)}+2r-1}^{2\sqrt{r(r-1)}+2r-1} \frac{x^n}{\sqrt{-x^2+2(2r-1)x-1}}\, dx$$ gives an
explicit moment representation for $\tilde{P}_n(r)$.

Turning now to the Legendre polynomials $P_n(x)$, we have the following result.
\begin{proposition}
The inverse $\mathbf{L}$ of the Riordan array
$$\left(\frac{1+\frac{1-r^2}{4}x^2}{1+rx+\frac{r^2-1}{2}x^2},\frac{x}{1+rx+\frac{r^2-1}{2}x^2}\right)$$ has as its first column
the  Legendre polynomials $P_n(r)$. The production matrix of $\mathbf{L}$ is tri-diagonal.
\end{proposition}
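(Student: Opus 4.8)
The plan is to follow the template of the shifted-Legendre proposition proved above. Write the given array as $L^{-1}=(g_0,f_0)$, where $g_0(x)=\frac{1+\frac{1-r^2}{4}x^2}{1+rx+\frac{r^2-1}{2}x^2}$ and $f_0(x)=\frac{x}{1+rx+\frac{r^2-1}{2}x^2}$. First I would compute $\mathbf{L}=(L^{-1})^{-1}=\left(1/(g_0\circ\overline{f_0}),\,\overline{f_0}\right)$ explicitly. The reversion $f=\overline{f_0}$ is supplied by the reversion lemma stated above (applied with $a=r$ and $b$ the coefficient of $x^2$ in the denominator), after which one simplifies the first component $g=1/(g_0\circ f)$. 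The expected outcome is
$$\mathbf{L}=\left(\frac{1}{\sqrt{1-2rx+x^2}},\,f(x)\right),$$
so that the first column of $\mathbf{L}$ has generating function $\frac{1}{\sqrt{1-2rx+x^2}}=\sum_{n\ge 0}P_n(r)x^n$, which is exactly the defining generating function of the Legendre polynomials read with $x$ as formal variable and $r$ as argument. This gives the first assertion.

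Rather than push the radical expression for $f$ through $g_0$, I would verify the identification of $g$ by checking the group relation $L^{-1}\cdot\mathbf{L}=I$ directly. Since $f\circ f_0=x$ holds automatically, this relation collapses to the single scalar identity $g(f_0(x))=1/g_0(x)$. Writing $D(x)$ for the denominator common to $g_0$ and $f_0$, so that $f_0=x/D$, one computes
$$g(f_0(x))=\frac{1}{\sqrt{1-2r\frac{x}{D}+\frac{x^2}{D^2}}}=\frac{D}{\sqrt{D^2-2rxD+x^2}},$$
while $1/g_0=D/N$ with $N$ the numerator of $g_0$. The whole claim therefore reduces to the polynomial identity $D^2-2rxD+x^2=N^2$. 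This single verification is where the precise quadratic coefficients enter (the analogue in the shifted case being the relation $b=\frac{a^2-1}{4}$ between the coefficients of the denominator), and I expect it to be the main obstacle; everything else is bookkeeping.

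For the tri-diagonality of $P=S_L$ I would invoke equations (\ref{AZ_eq}) with $\overline{f}=f_0$. Here $A(x)=x/\overline{f}(x)=D(x)$ is a quadratic, so the $A$-sequence has only three nonzero terms. For the $Z$-sequence, using $1/g(\overline{f})=g_0$ one gets
$$Z(x)=\frac{1}{\overline{f}(x)}\left(1-\frac{1}{g(\overline{f}(x))}\right)=\frac{D(x)}{x}\cdot\frac{D(x)-N(x)}{D(x)}=\frac{D(x)-N(x)}{x},$$
and since $D$ and $N$ share the constant term $1$, the difference $D-N$ is divisible by $x$ and $Z(x)$ is a polynomial of degree at most one. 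By Proposition \ref{RProdMat}, whose first two columns are precisely the $Z$- and $A$-sequences, the production matrix then has nonzero entries confined to the main diagonal, the superdiagonal, and the subdiagonal; that is, $P=S_L$ is tri-diagonal, with constant main diagonal $r$ and the off-diagonal pattern that, through Theorem \ref{CF}, realizes $P_n(r)$ as a moment sequence.
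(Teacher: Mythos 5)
Your reduction of the first claim to the single polynomial identity $D^2-2rxD+x^2=N^2$ is exactly the right move, but you leave that identity unverified, and it is in fact false for the array as stated. For $D(x)=1+rx+cx^2$ one has
$$D^2-2rxD+x^2=1+(2c+1-r^2)x^2+c^2x^4,$$
and this is the square of $N(x)=1-cx^2$ precisely when $2c+1-r^2=-2c$, i.e.\ when $c=\frac{r^2-1}{4}$ --- which is your own criterion $b=\frac{a^2-1}{4}$ carried over from the shifted case. The stated array has $c=\frac{r^2-1}{2}$, violating it: there the left-hand side collapses to $1+\frac{(r^2-1)^2}{4}x^4$, while $N^2=1+\frac{1-r^2}{2}x^2+\frac{(1-r^2)^2}{16}x^4$. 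So the step you flagged as ``the main obstacle'' genuinely cannot be completed, and the proposition as printed is false: taking $r=2$, the first column of the inverse of the stated array begins $1,\,2,\,\frac{25}{4},\,\frac{43}{2},\ldots$, whereas $P_n(2)$ begins $1,\,2,\,\frac{11}{2},\,17,\ldots$.

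What the failed check exposes is a typo in the statement rather than a flaw in your strategy: the denominators should read $1+rx+\frac{r^2-1}{4}x^2$. The paper's own proof silently works with that corrected array: applying the reversion lemma with $b=\frac{r^2-1}{4}$ gives discriminant $r^2-4b=1$, which is the only way the radical $\sqrt{1-2rx+x^2}$ in the displayed inverse $\mathbf{L}$ can arise; likewise the displayed production matrix (subdiagonal $\frac{r^2-1}{2},\frac{r^2-1}{4},\frac{r^2-1}{4},\ldots$) and the subsequent Hankel transform $(r^2-1)^{\binom{n+1}{2}}/2^{n^2}$ are consistent only with $c=\frac{r^2-1}{4}$. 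Your second part --- tri-diagonality via $A(x)=D(x)$ and $Z(x)=\frac{D(x)-N(x)}{x}$, using only that $D$ is quadratic and that $D,N$ share constant term $1$ --- is correct and complete as written, and is in fact cleaner than the paper's appeal to explicit closed forms; but note that it is insensitive to the value of $c$, so it could never by itself certify the Legendre identification. The one-line verification you isolated is not bookkeeping: it is the entire content of the first assertion, and carrying it out would have revealed the error and let you state and prove the corrected proposition.
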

\begin{proof}
We have
\begin{eqnarray*}
\mathbf{L}&=&\left(\frac{1+\frac{1-r^2}{4}x^2}{1+rx+\frac{r^2-1}{2}x^2},\frac{x}{1+rx+\frac{r^2-1}{2}x^2}\right)^{-1}\\
&=&\left(\frac{1}{\sqrt{1-2rx+x^2}}, \frac{2(1-rx-\sqrt{1-2rx+x^2})}{x(r^2-1)}\right).\end{eqnarray*} This proves the first assertion.
Using equations (\ref{AZ}) again, we obtain the following tri-diagonal matrix as the production matrix of $\mathbf{L}$:
\begin{displaymath}\left(\begin{array}{ccccccc} r & 1 &
0
& 0 & 0 & 0 & \ldots \\\frac{r^2-1}{2} & r & 1 & 0 & 0 & 0 & \ldots \\ 0 & \frac{r^2-1}{4}
& r & 1 & 0 &
0 & \ldots \\ 0 & 0 & \frac{r^2-1}{4} & r & 1 & 0 & \ldots \\ 0 & 0 & 0
& \frac{r^2-1}{4} & r & 1 & \ldots \\0 & 0 & 0 & 0 & \frac{r^2-1}{4} & r
&\ldots\\
\vdots &
\vdots & \vdots & \vdots & \vdots & \vdots &
\ddots\end{array}\right).\end{displaymath}
\end{proof}
\begin{corollary}
$$\mathbf{L}^{-1}=\left(\frac{1+\frac{1-r^2}{4}x^2}{1+rx+\frac{r^2-1}{2}x^2},\frac{x}{1+rx+\frac{r^2-1}{2}x^2}\right)$$ is the
coefficient array of a set of orthogonal polynomials for which the Legendre polynomials are moments.
\end{corollary}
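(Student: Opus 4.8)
The plan is to deduce the statement directly from the proposition just established together with the general machinery of Section~3, so that the corollary is essentially a bookkeeping step rather than a fresh computation. The preceding proposition already supplies the two facts that the whole argument rests on: that the production matrix $S_{\mathbf{L}}$ is tri-diagonal, and that the first column of $\mathbf{L}$ consists of the Legendre values $P_n(r)$. I would begin by making explicit that these are the only inputs needed.

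First I would invoke the characterization theorem for Riordan arrays with tri-diagonal production matrix (stated just after Corollary~\ref{Cor}). Since $S_{\mathbf{L}}$ is tri-diagonal, that theorem guarantees that $\mathbf{L}^{-1}$ is the coefficient array of a family of monic orthogonal polynomials $\{p_n(x)\}$, governed by the three-term recurrence read off from the entries of $S_{\mathbf{L}}$; here one has $\alpha_n=r$ for all $n$, $\beta_1=\frac{r^2-1}{2}$, and $\beta_n=\frac{r^2-1}{4}$ for $n\ge 2$. This already settles the first half of the claim, namely that $\mathbf{L}^{-1}$ is a coefficient array of an orthogonal family. Next I would apply Proposition~\ref{Moments}: writing $\mathbf{L}=(g(x),f(x))$, that proposition gives $\mathcal{L}(x^n)=l_{n,0}=[x^n]g(x)$, so the moment sequence of this family is exactly the first column of $\mathbf{L}$. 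By the preceding proposition that first column is the Legendre sequence $P_n(r)$, whence $\mu_n=P_n(r)$ and the Legendre polynomials are the moments of the orthogonal polynomials coded by $\mathbf{L}^{-1}$.

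Because every ingredient is already in hand, there is no genuine obstacle at this stage; the substantive work has been discharged upstream. The hard part was the earlier computation that inverts the Riordan array to the closed form with $g(x)=1/\sqrt{1-2rx+x^2}$ and then verifies tri-diagonality through the $A$- and $Z$-sequences of~(\ref{AZ_eq}), exactly as in the shifted-Legendre case. The only points requiring a word of care in the present proof are the normalization $\mu_0=1$, which holds since $g(0)=1$, and the observation that the moment identity of Proposition~\ref{Moments} is independent of the particular values of $\alpha_n$, so the reasoning transfers verbatim from the shifted-Legendre corollary.
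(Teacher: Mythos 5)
Your proposal is correct and takes essentially the same route as the paper: the paper treats this corollary as an immediate consequence of the preceding proposition (tri-diagonal production matrix, first column equal to $P_n(r)$) combined with Proposition \ref{Moments}, exactly as it states explicitly for the shifted-Legendre corollary. Your additional remarks on monicity, the recurrence coefficients, and the normalization $\mu_0=g(0)=1$ are consistent with the paper's framework and only make explicit what the paper leaves implicit.
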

\begin{proposition} The Hankel transform of $P_n(r)$ is given by $$\frac{(r^2-1)^{\binom{n+1}{2}}}{2^{n^2}}.$$
\end{proposition}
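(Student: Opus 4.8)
The plan is to read off the Jacobi (continued-fraction) parameters directly from the tridiagonal production matrix $S_{\mathbf{L}}$ established in the preceding proposition, and then feed them into the Hankel-determinant formula (\ref{Kratt}). Since that formula depends only on $\mu_0=P_0(r)=1$ and on the subdiagonal data $\beta_k$—and is independent of the diagonal entries $\alpha_n$, as remarked just after (\ref{Kratt})—the whole computation reduces to a short exponent bookkeeping.

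First I would record, from the displayed production matrix of the previous proposition together with Proposition \ref{Moments} (which identifies the first column of $\mathbf{L}$ as the moment sequence $P_n(r)$), that the continued fraction of Theorem \ref{CF} for the moment generating function $g(x)$ has $\alpha_n=r$ for all $n$, together with
$$\beta_1=\frac{r^2-1}{2}, \qquad \beta_k=\frac{r^2-1}{4}\quad(k\ge 2).$$
This is precisely the shape required to invoke (\ref{Kratt}), with $a_0=\mu_0=1$.

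Next I would substitute into $h_n=\prod_{k=1}^{n}\beta_k^{\,n+1-k}$, separating the $k=1$ factor:
$$h_n=\left(\frac{r^2-1}{2}\right)^{\!n}\prod_{k=2}^{n}\left(\frac{r^2-1}{4}\right)^{\!n+1-k}.$$
The exponent of $(r^2-1)$ is $n+\sum_{k=2}^{n}(n+1-k)=n+\binom{n}{2}=\binom{n+1}{2}$, while the power of $2$ in the denominator is $2^{n}\cdot 4^{\binom{n}{2}}=2^{\,n+2\binom{n}{2}}=2^{\,n^2}$. Assembling these gives $h_n=(r^2-1)^{\binom{n+1}{2}}/2^{n^2}$, as claimed.

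The only delicate point is the exponent arithmetic, and it is entirely routine: the two inner sums telescope to $\binom{n+1}{2}$ and to $n^2$, respectively, so there is no real obstacle---the statement is an immediate consequence of the tridiagonal production matrix and formula (\ref{Kratt}).
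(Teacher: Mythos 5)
Your proposal is correct and follows essentially the same route as the paper: both read off $\alpha_n=r$, $\beta_1=\tfrac{r^2-1}{2}$, $\beta_k=\tfrac{r^2-1}{4}$ $(k\ge 2)$ from the tridiagonal production matrix, express the g.f.\ of $P_n(r)$ as the corresponding continued fraction, and apply Equation (\ref{Kratt}). The only difference is that you spell out the exponent bookkeeping ($n+\binom{n}{2}=\binom{n+1}{2}$ and $n+2\binom{n}{2}=n^2$), which the paper leaves implicit.
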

\begin{proof}
From the above, we obtain that the g.f. of $P_n(r)$ can be expressed as
$$\cfrac{1}{1-rx-
\cfrac{\frac{r^2-1}{2}x^2}{1-rx-
\cfrac{\frac{r^2-1}{4}x^2}{1-rx-
\cfrac{\frac{r^2-1}{4}x^2}{1-\cdots}}}}.$$ The result now follows from Equation (\ref{Kratt}).
\end{proof}
\noindent We end this section by noting that
$$P_n(r)=\frac{1}{\pi}\int_{r-\sqrt{r^2-1}}^{r+\sqrt{r^2-1}} \frac{x^n}{\sqrt{-x^2+2rx-1}}\, dx$$ gives an explicit moment
representation for $P_n(r)$.
\section{Hermite polynomials}
The Hermite polynomials may be defined as
$$H_n(x)=\sum_{k=0}^{\lfloor \frac{n}{2} \rfloor}\frac{(-1)^k (2x)^{n-2k}}{k!(n-2k)!}.$$
The generating function for $H_n(x)$ is given by
$$e^{2xt-t^2}=\sum_{n=0}^{\infty} H_n(x)\frac{t^n}{n!}.$$
The unitary Hermite polynomials (also called normalized Hermite polynomials) are given by
$$He_n(x)=2^{-\frac{n}{2}} H_n (\sqrt{2}x)=\sum_{k=0}^n
\frac{n!}{(-2)^{\frac{n-k}{2}}k!\left(\frac{n-k}{2}\right)!}\frac{1+(-1)^{n-k}}{2}x^k.$$
Their generating function is given by
$$e^{xt-\frac{t^2}{2}}=\sum_{n=0}^{\infty} He_n(x)\frac{t^n}{n!}.$$
We note that the coefficient array of $He_n$ is a proper exponential Riordan array, equal to
$$\left[ e^{-\frac{x^2}{2}},x\right].$$ This array \seqnum{A066325} begins
\begin{displaymath}\left(\begin{array}{ccccccc} 1 & 0 &
0
& 0 & 0 & 0 & \ldots \\0 & 1 & 0 & 0 & 0 & 0 & \ldots \\ -1 & 0
& 1 & 0 & 0 &
0 & \ldots \\ 0 & -3 & 0 & 1 & 0 & 0 & \ldots \\ 3 & 0 & -6
& 0 & 1 & 0 & \ldots \\0 & 15 & 0 & -10 & 0 & 1
&\ldots\\
\vdots &
\vdots & \vdots & \vdots & \vdots & \vdots &
\ddots\end{array}\right).\end{displaymath} It is the aeration of the alternating sign version of the
Bessel coefficient array \seqnum{A001497}. The inverse of this matrix has production matrix
\begin{displaymath}\left(\begin{array}{ccccccc} 0 & 1 &
0
& 0 & 0 & 0 & \ldots \\1 & 0 & 1 & 0 & 0 & 0 & \ldots \\ 0 & 2
& 0 & 1 & 0 &
0 & \ldots \\ 0 & 0 & 3 & 0 & 1 & 0 & \ldots \\ 0 & 0 & 0
& 4 & 0 & 1 & \ldots \\0 & 0 & 0 & 0 & 5 & 0
&\ldots\\
\vdots &
\vdots & \vdots & \vdots & \vdots & \vdots &
\ddots\end{array}\right),\end{displaymath} which corresponds to the fact that we have the following three-term recurrence for $He_n$:
$$He_{n+1}(x)=xHe_n(x)-n He_{n-1}(x).$$
\section{Hermite polynomials as moments}

\begin{proposition} The proper exponential Riordan array
$$\mathbf{L}=\left[e^{rx-\frac{x^2}{2}},x\right]$$ has as first column the unitary Hermite polynomials $He_n(r)$. This array has a tri-diagonal
production array.
\end{proposition}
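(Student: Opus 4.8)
The plan is to verify the two assertions in turn, relying on the structural facts about exponential Riordan arrays established above. Neither step is hard once the right tool is applied; the work is essentially bookkeeping.

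First I would identify the first column. By definition the $0$-th column of an exponential Riordan array $[g,f]$ has exponential generating function $g(x)f(x)^0/0!=g(x)$, so here the first column of $\mathbf{L}$ has e.g.f. $e^{rx-\frac{x^2}{2}}$. On the other hand, the generating function recorded above for the unitary Hermite polynomials, $e^{xt-\frac{t^2}{2}}=\sum_{n\ge 0}He_n(x)\frac{t^n}{n!}$, becomes $e^{rx-\frac{x^2}{2}}=\sum_{n\ge 0}He_n(r)\frac{x^n}{n!}$ upon setting the polynomial argument equal to $r$ and renaming the formal variable to $x$. Comparing coefficients shows that the $n$-th entry of the first column of $\mathbf{L}$ is exactly $He_n(r)$, which is the first claim.

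Next I would compute the production matrix using the proposition on production matrices of exponential Riordan arrays, which requires the two series $c$ and $r$ determined by $r(f(x))=f'(x)$ and $c(f(x))=g'(x)/g(x)$; equivalently $r(x)=f'(\bar{f}(x))$ and $c(x)=g'(\bar{f}(x))/g(\bar{f}(x))$ (here $r(\cdot)$ denotes that series, not the scalar parameter $r$). Since $f(x)=x$ we have $\bar{f}(x)=x$ and $f'(x)=1$, so $r(x)\equiv 1$; and since $g'(x)/g(x)=r-x$ we get $c(x)=r-x$. In coefficients this reads $c_0=r$, $c_1=-1$, $c_k=0$ for $k\ge 2$, together with $r_0=1$ and $r_k=0$ for $k\ge 1$. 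Substituting into the entry formula $p_{i,j}=\frac{i!}{j!}\bigl(c_{i-j}+j\,r_{i-j+1}\bigr)$ (with the convention $c_{-1}=0$), an entry can be nonzero only when $c_{i-j}\ne 0$, i.e. $i-j\in\{0,1\}$, or when $r_{i-j+1}=r_0$, i.e. $i-j=-1$. Hence $S_{\mathbf{L}}$ is supported on the main diagonal, the sub-diagonal and the super-diagonal, so it is tri-diagonal, with $p_{i,i}=r$, $p_{i,i+1}=1$ and $p_{i,i-1}=-i$.

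I do not expect a genuine obstacle: both parts are direct computations. The points demanding care are the notational collision between the Hermite parameter $r$ and the production-matrix series $r(\cdot)$ of the cited proposition, and the correct handling of the index shifts (and of the convention $c_{-1}=0$) in the formula for $p_{i,j}$. As a check on the method, running the same computation on $[e^{\frac{x^2}{2}},x]$ gives $c(x)=x$ and hence a production matrix with zero diagonal, super-diagonal $1$ and sub-diagonal $+i$, which reproduces the matrix displayed in the Hermite section; specializing the present computation to $r=0$ is consistent with this under inversion.
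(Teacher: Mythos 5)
Your proposal is correct and takes essentially the same approach as the paper: identify the first column via the e.g.f.\ $g(x)=e^{rx-\frac{x^2}{2}}$ and the generating function of $He_n$, then compute $r(x)=1$ and $c(x)=r-x$ from equations (\ref{r}) and (\ref{c}) to obtain the tri-diagonal production matrix. Your extra step of substituting into the entry formula $p_{i,j}=\frac{i!}{j!}(c_{i-j}+jr_{i-j+1})$ just makes explicit what the paper states directly, and your computed entries $p_{i,i}=r$, $p_{i,i+1}=1$, $p_{i,i-1}=-i$ agree with the displayed matrix.
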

\begin{proof} The first column of $\mathbf{L}$ has generating function $e^{rx-\frac{x^2}{2}}$, from which the first assertion follows.
We now use equations (\ref{r}) and (\ref{c}) to calculate the production matrix $P=S_L$.
We have $f(x)=x$, so that $\bar{f}(x)=x$ and $f'(\bar{f}(x))=1=r(x)$. Also,
 $g(x)=e^{rx-\frac{x^2}{2}}$ implies that $g'(x)=g(x)(r-x)$, and hence
$$c(x)=\frac{g'(\bar{f}(x))}{g(\bar{f}(x))}=r-x.$$
Thus the production array of $\mathbf{L}$ is indeed tri-diagonal, beginning
\begin{displaymath}\left(\begin{array}{ccccccc} r & 1 &
0
& 0 & 0 & 0 & \ldots \\-1 & r & 1 & 0 & 0 & 0 & \ldots \\ 0 & -2
& r & 1 & 0 &
0 & \ldots \\ 0 & 0 & -3 & r & 1 & 0 & \ldots \\ 0 & 0 & 0
& -4 & r & 1 & \ldots \\0 & 0 & 0 & 0 & -5 & r
&\ldots\\
\vdots &
\vdots & \vdots & \vdots & \vdots & \vdots &
\ddots\end{array}\right).\end{displaymath}
\end{proof}

We note that $\mathbf{L}$ starts
\begin{displaymath}\left(\begin{array}{ccccccc} 1 & 0 &
0
& 0 & 0 & 0 & \ldots \\r & 1 & 0 & 0 & 0 & 0 & \ldots \\ r^2-1 & 2r
& 1 & 0 & 0 &
0 & \ldots \\ r(r^2-3) & 3(r^2-1) & 3r & 1 & 0 & 0 & \ldots \\ r^4-6r^2+3  & 4r(r^2-3) & 6(r^2-1)
& 4r & 1 & 0 & \ldots \\r(r^4-10r^2+15) & 5(r^4-6r^2+3) & 10r(r^2-3) & 10(r^2-1) & 5r & 1
&\ldots\\
\vdots &
\vdots & \vdots & \vdots & \vdots & \vdots &
\ddots\end{array}\right).\end{displaymath}
Thus $$\mathbf{L}^{-1}=\left[e^{-rx+\frac{x^2}{2}},x\right]$$ is the coefficient array of a set of orthogonal polynomials which have as moments
the unitary Hermite polynomials. These new orthogonal polynomials satisfy the three-term recurrence
$$\mathfrak{H}_{n+1}(x)=(x-r)\mathfrak{H}_n(x)+n \mathfrak{H}_{n-1}(x),$$ with
$\mathfrak{H}_0=1$, $\mathfrak{H}_1=x-r$.

\begin{proposition} The Hankel transform of the sequence $He_n(r)$ is given by $(-1)^{\binom{n+1}{2}}\prod_{k=0}^n  k!$
\end{proposition}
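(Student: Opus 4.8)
The plan is to read off the Jacobi continued-fraction parameters of the moment sequence directly from the production matrix computed in the preceding proposition, and then feed them into the general Hankel-determinant formula (\ref{Kratt}). From the tri-diagonal production matrix $S_L$ of $\mathbf{L}=\left[e^{rx-\frac{x^2}{2}},x\right]$, whose main diagonal is constant equal to $r$ and whose sub-diagonal reads $-1,-2,-3,\ldots$, I would record that the associated orthogonal polynomials satisfy $\mathfrak{H}_{n+1}(x)=(x-r)\mathfrak{H}_n(x)+n\mathfrak{H}_{n-1}(x)$, so that in the notation of Theorem \ref{CF} the recurrence coefficients are $\alpha_n=r$ and $\beta_n=-n$, with $\mu_0=He_0(r)=1$.

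Next I would invoke Theorem \ref{CF}, which guarantees that the ordinary generating function of the moment sequence $\mu_n=He_n(r)$ (the first column of $\mathbf{L}$) has exactly the continued-fraction form demanded by the hypothesis of Equation (\ref{Kratt}). Substituting $a_0=\mu_0=1$ and $\beta_k=-k$ into (\ref{Kratt}) then yields
$$h_n=\prod_{k=1}^n (-k)^{n+1-k},$$
and the remaining work is purely arithmetic: I separate this expression into a sign and a positive factor.

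For the sign, the exponent of $-1$ is $\sum_{k=1}^n (n+1-k)=\sum_{j=1}^n j=\binom{n+1}{2}$, contributing $(-1)^{\binom{n+1}{2}}$. For the magnitude I would establish the superfactorial identity $\prod_{k=1}^n k^{n+1-k}=\prod_{k=0}^n k!$ by counting, in $\prod_{j=1}^n j!$, the multiplicity of each integer $m\in\{1,\ldots,n\}$: it occurs as a factor of $j!$ precisely for $j=m,m+1,\ldots,n$, that is $n+1-m$ times, which reproduces the left-hand exponents. Combining the two gives $h_n=(-1)^{\binom{n+1}{2}}\prod_{k=0}^n k!$, as claimed.

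The only content beyond quoting the two earlier results is the product-reindexing identity, and even that is just the standard expression of the superfactorial, so I anticipate no genuine obstacle. The single point worth care is that (\ref{Kratt}) is phrased in terms of the \emph{ordinary} generating function of the moments, whereas $\mathbf{L}$ is an \emph{exponential} Riordan array; this is harmless, because the Hankel transform depends only on the moment sequence itself, and Theorem \ref{CF} furnishes the ordinary-g.f.\ continued fraction for that sequence regardless of which Riordan type produced it.
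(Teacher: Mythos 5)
Your proof is correct and takes essentially the same route as the paper: read the Jacobi parameters $\alpha_n=r$, $\beta_n=-n$ off the tri-diagonal production matrix, invoke Theorem \ref{CF} for the continued-fraction form of the ordinary generating function of the moment sequence $He_n(r)$, and then apply Equation (\ref{Kratt}). Your write-up is in fact the more careful one: your numerators $x^2,2x^2,3x^2,\ldots$ are the correct ones (the paper's displayed continued fraction begins at $2x^2$, an off-by-one slip which would give the wrong value already at $h_1$), and you make explicit both the sign/superfactorial arithmetic and the point that (\ref{Kratt}) concerns the ordinary g.f.\ of the moments even though $\mathbf{L}$ is an exponential Riordan array, both of which the paper leaves implicit.
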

\begin{proof}
By the above, the g.f. of $H_n(r)$ is given by
$$
\cfrac{1}{1-rx+
\cfrac{2x^2}{1-rx+
\cfrac{3x^2}{1-rx+
\cfrac{4x^2}{1-\ldots}}}}. $$
The result now follows from Equation (\ref{Kratt}). \end{proof}

Turning now to the Hermite polynomials $H_n(x)$, we have the following result.
\begin{proposition} The proper exponential Riordan array
$$\mathbf{L}=\left[e^{2rx-x^2},x\right]$$ has as first column the  Hermite polynomials $H_n(r)$. This array has a tri-diagonal
production array.
\end{proposition}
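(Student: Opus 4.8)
The plan is to reproduce, essentially verbatim, the argument used for the unitary Hermite array $[e^{rx-x^2/2},x]$ in the preceding proposition, since the two differ only in the quadratic exponent. First I would settle the statement about the first column. For any array $[g(x),x]$ in the exponential Appell subgroup, the $0$-th column has exponential generating function $g(x)$. Evaluating the Hermite generating function $e^{2xt-t^2}=\sum_{n=0}^{\infty}H_n(x)\frac{t^n}{n!}$ at $x=r$ shows that $e^{2rt-t^2}$ is the e.g.f. of the sequence $(H_n(r))_{n\ge 0}$. Hence the first column of $\mathbf{L}=[e^{2rx-x^2},x]$, whose e.g.f. is $e^{2rx-x^2}$, is precisely $(H_n(r))_{n\ge 0}$, proving the first assertion.

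Next I would compute the production matrix $S_L$ via equations (\ref{r}) and (\ref{c}). Since $f(x)=x$ we have $\bar{f}(x)=x$ and $f'(x)=1$, so $r(x)=f'(\bar{f}(x))=1$. From $g(x)=e^{2rx-x^2}$ we get $g'(x)=g(x)(2r-2x)$, whence $c(x)=g'(\bar{f}(x))/g(\bar{f}(x))=2r-2x$. The decisive point is that both series are polynomials of degree at most one: $r_0=1$ and $r_k=0$ for $k\ge 1$, while $c_0=2r$, $c_1=-2$ and $c_k=0$ for $k\ge 2$.

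Finally I would substitute these coefficients into the entry formula $p_{i,j}=\frac{i!}{j!}(c_{i-j}+j\,r_{i-j+1})$ (with the convention $c_{-1}=0$) for the production matrix of an exponential Riordan array. Because $c_{i-j}$ is nonzero only for $i-j\in\{0,1\}$ and $r_{i-j+1}$ is nonzero only for $i-j=-1$, the sole surviving entries are $p_{i,i}=c_0=2r$, $p_{i,i-1}=i\,c_1=-2i$, and $p_{i,i+1}=\frac{1}{i+1}(i+1)r_0=1$, all others vanishing. This is exactly a tri-diagonal matrix, which establishes the second assertion. I do not expect a genuine obstacle, as the structure is identical to the unitary case; the only step demanding care is the index bookkeeping in the entry formula, in particular the shift in $r_{i-j+1}$ that routes the contribution of $r$ to the super-diagonal and the convention $c_{-1}=0$ that kills any entry off the three central bands.
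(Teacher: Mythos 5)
Your proposal is correct and follows essentially the same route as the paper: identify the first column via the e.g.f.\ $e^{2rx-x^2}$ evaluated from the Hermite generating function, then compute $r(x)=1$ and $c(x)=2r-2x$ from equations (\ref{r}) and (\ref{c}) to conclude tri-diagonality. The only difference is that you spell out the entry formula $p_{i,j}=\frac{i!}{j!}(c_{i-j}+j\,r_{i-j+1})$ explicitly, which the paper leaves implicit (it mirrors the computation done for the unitary case $[e^{rx-x^2/2},x]$); your resulting matrix, with diagonal $2r$, subdiagonal $-2i$, and superdiagonal $1$, agrees exactly with the paper's.
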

\begin{proof} The first column of $\mathbf{L}$ has generating function $e^{2rx-x^2}$, from which the first assertion
follows.
Using equations (\ref{r}) and (\ref{c}) we find that the production array of $\mathbf{L}$ is indeed tri-diagonal, beginning
\begin{displaymath}\left(\begin{array}{ccccccc} 2r & 1 &
0
& 0 & 0 & 0 & \ldots \\-2 & 2r & 1 & 0 & 0 & 0 & \ldots \\ 0 & -4
& 2r & 1 & 0 &
0 & \ldots \\ 0 & 0 & -6 & 2r & 1 & 0 & \ldots \\ 0 & 0 & 0
& -8 & 2r & 1 & \ldots \\0 & 0 & 0 & 0 & -10 & 2r
&\ldots\\
\vdots &
\vdots & \vdots & \vdots & \vdots & \vdots &
\ddots\end{array}\right).\end{displaymath}
\end{proof}

We note that $\mathbf{L}$ starts
\begin{displaymath}\left(\begin{array}{ccccccc} 1 & 0 &
0
& 0 & 0 & 0 & \ldots \\2r & 1 & 0 & 0 & 0 & 0 & \ldots \\ 2(2r^2-1) & 4r
& 1 & 0 & 0 &
0 & \ldots \\ 4r(2r^2-3) & 6(2r^2-1) & 6r & 1 & 0 & 0 & \ldots \\ 4(4r^3-12r^2+3)  & 16r(2r^2-3) & 12(2r^2-1)
& 8r & 1 & 0 & \ldots \\8r(4r^4-20r^2+15) & 20(4r^4-12r^2+3) & 40r(2r^2-3) & 20(2r^2-1) & 10r & 1
&\ldots\\
\vdots &
\vdots & \vdots & \vdots & \vdots & \vdots &
\ddots\end{array}\right).\end{displaymath}
Thus $$\mathbf{L}^{-1}=\left[e^{-2rx+x^2},x\right]$$ is the coefficient array of a set of orthogonal polynomials which have as moments
the Hermite polynomials. These new orthogonal polynomials satisfy the three-term recurrence
$$\mathfrak{H}_{n+1}(x)=(x-2r)\mathfrak{H}_n(x)+2n \mathfrak{H}_{n-1}(x),$$ with
$\mathfrak{H}_0=1$, $\mathfrak{H}_1=x-2r$.
\begin{proposition} The Hankel transform of the sequence $H_n(r)$ is given by $(-1)^{\binom{n+1}{2}}\prod_{k=0}^n 2^k k!$
\end{proposition}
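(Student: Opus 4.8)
The plan is to read the three-term recurrence coefficients directly off the tri-diagonal production matrix established in the preceding proposition, convert them into a $J$-fraction for the ordinary generating function of the moment sequence $H_n(r)$ by way of Theorem \ref{CF}, and then substitute the resulting $\beta_k$ into the Hankel-determinant formula (\ref{Kratt}). Since Theorem \ref{CF} speaks about the moments of the functional $\mathcal{L}$ itself, it does not matter that the moments were produced by an \emph{exponential} Riordan array: the relevant generating function for the Hankel transform is the ordinary one $\sum_n H_n(r)x^n$, whose $J$-fraction is governed by the recurrence coefficients.

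First I would record that the production matrix $S_{\mathbf L}$ of $\mathbf{L}=\left[e^{2rx-x^2},x\right]$ has constant diagonal $\alpha_n=2r$ and subdiagonal entries $-2,-4,-6,\ldots$, so that $\beta_n=-2n$ for $n\ge 1$, matching the recurrence $\mathfrak{H}_{n+1}(x)=(x-2r)\mathfrak{H}_n(x)+2n\,\mathfrak{H}_{n-1}(x)$ noted above. Because $H_n(r)$ is the moment sequence of this orthogonal family (it is the first column of $\mathbf{L}$, with $\mu_0=H_0(r)=1$), Theorem \ref{CF} yields
$$\sum_{n\ge 0} H_n(r)\,x^n=\cfrac{1}{1-2rx+\cfrac{2x^2}{1-2rx+\cfrac{4x^2}{1-2rx+\cfrac{6x^2}{1-\cdots}}}},$$
where the partial numerators are $-\beta_k x^2=2k\,x^2$.

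Next I would feed $\mu_0=1$ and $\beta_k=-2k$ into formula (\ref{Kratt}), giving
$$h_n=\prod_{k=1}^n \beta_k^{\,n+1-k}=\prod_{k=1}^n(-2k)^{\,n+1-k}.$$
Splitting each factor into its sign, its power of $2$, and its power of $k$, and using $\sum_{k=1}^n(n+1-k)=\binom{n+1}{2}$, the signs contribute $(-1)^{\binom{n+1}{2}}$ and the powers of $2$ contribute $2^{\binom{n+1}{2}}=\prod_{k=0}^n 2^k$, leaving the factor $\prod_{k=1}^n k^{\,n+1-k}$.

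The one genuinely combinatorial step is identifying $\prod_{k=1}^n k^{\,n+1-k}$ with the superfactorial $\prod_{k=0}^n k!$, and this is where I expect the only real care to be needed; everything else is bookkeeping. I would argue it by counting the multiplicity of each integer $j$ with $1\le j\le n$: it appears as a factor of $k!$ exactly when $k\ge j$, hence $n+1-j$ times in $\prod_{k=1}^n k!=\prod_{j=1}^n j^{\,n+1-j}$, which is precisely its exponent in $\prod_{k=1}^n k^{\,n+1-k}$. Combining the three pieces then gives $h_n=(-1)^{\binom{n+1}{2}}\prod_{k=0}^n 2^k\,k!$, as claimed.
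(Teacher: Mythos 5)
Your proof is correct and follows essentially the same route as the paper: read the coefficients $\alpha_n=2r$, $\beta_n=-2n$ off the tri-diagonal production matrix, form the $J$-fraction for the ordinary generating function $\sum_{n} H_n(r)x^n$ via Theorem \ref{CF}, and apply formula (\ref{Kratt}). The paper's own proof is simply terser, displaying the continued fraction and citing (\ref{Kratt}) without carrying out the simplification $\prod_{k=1}^n(-2k)^{n+1-k}=(-1)^{\binom{n+1}{2}}\prod_{k=0}^n 2^k\,k!$, which you verify carefully (including the correct observation that the Hankel transform is governed by the ordinary, not exponential, generating function of the moment sequence).
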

\begin{proof}
By the above, the g.f. of $H_n(r)$ is given by
$$
\cfrac{1}{1-2rx+
\cfrac{2x^2}{1-2rx+
\cfrac{4x^2}{1-2rx+
\cfrac{6x^2}{1-\ldots}}}}. $$
The result now follows from Equation (\ref{Kratt}). \end{proof}

\section{Acknowledgements} The author would like to thank an anonymous referee for their careful reading and cogent suggestions which have hopefully led to a clearer paper.

\section{Appendix - The Stieltjes transform of a measure} The
\emph{Stieltjes transform} of a measure $\mu$ on $\mathbb{R}$
is a function $G_{\mu}$
defined on $\mathbb{C}\setminus \mathbb{R}$ by
$$G_{\mu}(z)=\int_{\mathbb{R}}\frac{1}{z-t}\mu(t).$$ If $f$ is
a bounded continuous function on
$\mathbb{R}$, we have $$\int_{\mathbb{R}}f(x)\mu(x)=-\lim_{y
\to 0^{+}}\int_{\mathbb{R}}f(x)\Im G_{\mu}(x+iy)dx.$$ If $\mu$
has compact
support, then $G_{\mu}$ is holomorphic at infinity and for
large $z$, $$G_{\mu}(z)=\sum_{n=0}^{\infty}
\frac{a_n}{z^{n+1}},$$ where
$a_n=\int_{\mathbb{R}}t^n \mu(t)$ are the moments of the
measure. If $\mu(t)=d\psi(t)=\psi'(t)dt$ then (Stieltjes-Perron)
$$\psi(t)-\psi(t_0)=-\frac{1}{\pi}\lim_{y \to
0^{+}}\int_{t_0}^t \Im G_{\mu}(x+iy)dx.$$ If now $g(x)$ is the
generating function of a
sequence $a_n$, with $g(x)=\sum_{n=0}^{\infty}a_n x^n$, then
we
can define
$$G(z)=\frac{1}{z}g\left(\frac{1}{z}\right)=\sum_{n=0}^{\infty}\frac{a_n}{z^{n+1}}.$$
By this means, under the right circumstances we can
retrieve the density function for the measure that defines the
elements $a_n$ as moments.
\begin{example}
We let $g(z)=\frac{1-\sqrt{1-4z}}{2z}$ be the g.f. of the Catalan numbers. Then
$$G(z)=\frac{1}{z}g\left(\frac{1}{z}\right)=\frac{1}{2}\left(1-\sqrt{\frac{x-4}{x}}\right).$$
Then $$\Im G_{\mu}(x+iy)=-\frac{\sqrt{2}\sqrt{\sqrt{x^2+y^2}\sqrt{x^2-8x+y^2+16}-x^2+4x-y^2}}{4\sqrt{x^2+y^2}},$$
and so we obtain \begin{eqnarray*}\psi'(x)&=&-\frac{1}{\pi}\lim_{y \to
0^{+}}\left\{-\frac{\sqrt{2}\sqrt{\sqrt{x^2+y^2}\sqrt{x^2-8x+y^2+16}-x^2+4x-y^2}}{4\sqrt{x^2+y^2}}\right\}\\
&=& \frac{1}{2\pi}\frac{\sqrt{x(4-x)}}{x}.\end{eqnarray*}
\end{example}

\bigskip
\hrule
\bigskip
\noindent 2010 {\it Mathematics Subject Classification}:
Primary 42C05; Secondary
11B83, 11C20, 15B05, 15B36, 33C45.

\noindent \emph{Keywords:} Legendre polynomials, Hermite polynomials, integer sequence, orthogonal polynomials, moments,
Riordan array, Hankel determinant, Hankel transform.

\bigskip
\hrule
\bigskip
\noindent Concerns sequences
\seqnum{A000007},
\seqnum{A000045},
\seqnum{A000108},
\seqnum{A000262},
\seqnum{A001405},
\seqnum{A007318},
\seqnum{A009766},
\seqnum{A021009},
\seqnum{A033184},
\seqnum{A053121},
\seqnum{A094587},
\seqnum{A094816},
\seqnum{A111596},
\seqnum{A111884},
\seqnum{A119467},
\seqnum{A119879},
\seqnum{A155585}.

\end{document}